\newtheorem{thm}{Theorem}[section]
\newtheorem{lem}[thm]{Lemma}
\newtheorem{prop}[thm]{Proposition}
\theoremstyle{definition}
\newtheorem{defn}[thm]{Definition}
\newtheorem{example}[thm]{Example}
\theoremstyle{remark}
\newtheorem{rem}[thm]{Remark}
\numberwithin{equation}{section}
\begin{document}
\title[Generalized $c$-almost periodic type functions in ${\mathbb R}^{n}$]{Generalized $c$-almost periodic type functions in ${\mathbb R}^{n}$}

\author{M. Kosti\' c}
\address{Faculty of Technical Sciences,
University of Novi Sad,
Trg D. Obradovi\' ca 6, 21125 Novi Sad, Serbia}
\email{marco.s@verat.net}

{\renewcommand{\thefootnote}{} \footnote{2010 {\it Mathematics
Subject Classification.} 42A75, 43A60, 47D99.
\\ \text{  }  \ \    {\it Key words and phrases.} Quasi-asymptotically $c$-almost periodic type functions, $(S,{\mathbb D})$-asymptotically 
$(\omega,c)$-periodic type functions, $S$-asymptotically $(\omega_{j},c_{j},{\mathbb D}_{j})_{j\in {\mathbb N}_{n}}$-periodic type functions, semi-$(c_{j})_{j\in {\mathbb N}_{n}}$-periodic type functions, Weyl $c$-almost periodic type functions,
abstract Volterra integro-differential equations.
\\  \text{  }  
Marko Kosti\' c is partially supported by grant 451-03-68/2020/14/200156 of Ministry
of Science and Technological Development, Republic of Serbia.}}

\begin{abstract}
In this paper, we analyze multi-dimensional quasi-asymptotically $c$-almost periodic functions and their Stepanov generalizations as well as multi-dimensional Weyl $c$-almost periodic type functions.
We also analyze several important subclasses of the class of multi-dimensional quasi-asymptotically $c$-almost periodic functions and reconsider the notion of semi-$c$-periodicity in the multi-dimensional setting, working in the general framework of Lebesgue spaces with variable exponent. We
provide
certain applications of our results to
the abstract Volterra integro-differential equations in Banach spaces.  
\end{abstract}
\maketitle

\section{Introduction and preliminaries}

The notion of almost periodicity was introduced by the Danish mathematician H. Bohr around 1924-1926 and later reconsidered by many others. Suppose that $I$ is either ${\mathbb R}$ or $[0,\infty)$ and $f : I \rightarrow X$ is a given continuous function, where $X$ is a complex Banach space equipped with the norm $\| \cdot \|$. If $\varepsilon>0,$ then we say that a positive real number $\tau>0$ is a $\varepsilon$-period for $f(\cdot)$ if and only if\index{$\varepsilon$-period}
$
\| f(t+\tau)-f(t) \| \leq \varepsilon,$ $ t\in I.
$
The set constituted of all $\varepsilon$-periods for $f(\cdot)$ is denoted by $\vartheta(f,\varepsilon).$ We say that the function $f(\cdot)$ is almost periodic if and only if for each $\varepsilon>0$ the set $\vartheta(f,\varepsilon)$ is relatively dense in $[0,\infty),$ which means that
there exists a finite real number $l>0$ such that any subinterval of $[0,\infty)$ of length $l$ meets $\vartheta(f,\varepsilon)$. For more details about almost periodic functions and their applications, we refer the reader to \cite{besik, diagana, fink, gaston, nova-mono, 188, pankov, 30}.

The class of $S$-asymptotically $\omega$-periodic functions, where $\omega>0,$ was introduced by
H. R. Henr\'iquez, M. Pierri and P. T\' aboas in \cite{pierro}. This class of continuous functions has different ergodicity properties
compared with the classes of $\omega$-periodic functions and asymptotically $\omega$-periodic functions, and it is not so easily comparable with the class of almost periodic functions since an $S$-asymptotically $\omega$-periodic function is not necessarily uniformly continuous. For some applications of 
$S$-asymptotically $\omega$-periodic functions, we refer the reader to \cite{cuevas-souza, dimbour, guengai, hrh}. 

In \cite{brazil}, we have recently analyzed the class of quasi-asymptotically almost periodic functions. Any
$S$-asymptotically $\omega$-periodic function 
$f: I \rightarrow X$ is quasi-asymptotically almost periodic, while the converse statement is not true in general. The class of Stepanov $p$-quasi-asymptotically almost periodic functions, which has been also analyzed in \cite{brazil}, contains all asymptotically Stepanov $p$-almost periodic functions and make a subclass of the class consisting of all Weyl $p$-almost periodic functions in the sense of general approach of A. S. Kovanko \cite{kovanko}; thus, in \cite{brazil}, we have actually initiated the study of generalized (asymptotical) almost periodicity that intermediate the Stepanov concept and a very general Weyl concept.

The main purpose of research articles \cite{marko-manuel-ap}-\cite{stmarko-manuel-ap}, written in a collaboration with A. Ch\'avez, K. Khalil and M. Pinto, was to analyze
various classes of (Stepanov) almost periodic functions of form $F : \Lambda \times X\rightarrow Y,$ where $(Y,\|\cdot \|_{Y})$ is a complex Banach spaces and $\emptyset \neq  \Lambda \subseteq {\mathbb R}^{n}$. 
In our recent joint research article \cite{rmjm} with V. Fedorov, we have continued the research studies \cite{marko-manuel-ap}-\cite{stmarko-manuel-ap} by developing the basic theory of multi-dimensional Weyl almost periodic type functions in Lebesgue spaces with variable exponents (see also the research articles 
\cite{marko-manuel-aa}-\cite{stmarko-manuel-aa} and \cite{multi-weyl} for further information concerning multi-dimensional almost automorphic type functions as well as their Stepanov and Weyl generalizations).

On the other hand, the notion of $(\omega,c)$-periodicity and various generalizations of this concept have recently been introduced and analyzed by E. Alvarez, A. G\'omez, M. Pinto \cite{alvarez1},
E. Alvarez, S. Castillo, M. Pinto \cite{alvarez2}-\cite{alvarez3} and M. Fe\v{c}kan, K. Liu, J.-R. Wang \cite{Reference040}. In  our joint research article \cite{c1} with M. T. Khalladi, A. Rahmani, M. Pinto and D. Velinov,
we have investigated 
$c$-almost periodic type functions and their applications (the notion of $c$-almost periodicity, depending only on the parameter $c,$ is substantially different from the notion of $(\omega,c)$-periodicity and the recently analyzed notion of $(\omega,c)$-almost periodicity; see the forthcoming research monograph \cite{nova-selected} for more details about the subject). The analysis 
from \cite{c1} has been continued in \cite{BIMV}, where the same group of authors has analyzed Weyl $c$-almost periodic type functions, quasi-asymptotically $c$-almost periodic type functions and $S$-asymptotically $(\omega ,c)$-periodic type functions in the one-dimensional setting, as well as in the research articles \cite{multi-ce} and \cite{multi-omega-ce}, where the author of this paper has analyzed multi-dimensional $c$-almost periodic type functions and various classes of multi-dimensional $(\omega,c)$-almost periodic type functions. 

The main aim of this paper is to continue the research studies raised in the above-mentioned papers by introducing and investigating various classes of multi-dimensional quasi-asymptotically $c$-almost periodic functions, multi-dimensional semi-$c$-periodic functions, multi-dimensional Weyl $c$-almost periodic functions (see the article \cite{andres1} by J. Andres and D. Pennequin for the initial study of semi-periodicity as well as \cite{Chaouchi}, \cite{JM} for more deatils about this topic) and their applications to the abstract Volterra integro-differential equations.

The organization of paper can be briefly described as follows. After recalling the basic definitions and facts about asymptotically $c$-almost periodic functions in the multi-dimensional framework, we remind the readers of the basic definitions and facts about Lebesgue spaces with variable exponents
$L^{p(x)}$ (Subsection \ref{karambita}), almost periodic type functions in
${\mathbb R}^{n}$ (Subsection \ref{stavisub}),
$({\bf \omega},c)$-periodic functions and $({\bf \omega}_{j},c_{j})_{j\in {\mathbb N}_{n}}$-periodic functions (Subsection \ref{krizni-stab}). Following our approach from \cite{c1}-\cite{JM} and \cite{multi-omega-ce}, in Section \ref{profica-jazz} we introduce and analyze $(S,{\mathbb D})$-asymptotically 
$(\omega,c)$-periodic type functions, $S$-asymptotically $(\omega_{j},c_{j},{\mathbb D}_{j})_{j\in {\mathbb N}_{n}}$-periodic type functions and semi-$(c_{j},{\mathcal B})_{j\in {\mathbb N}_{n}}$-periodic functions (the last class of functions is investigated in Subsection \ref{semi-cejot}); here, it is worth noting that the notion of $(S,{\mathbb D})$-asymptotical
$(\omega,c)$-periodicity seems to be new even in the one-dimensional setting.
Various classes of multi-dimensional quasi-asymptotically $c$-almost periodic functions are examined in Section \ref{profica-efg} following the approach obeyed in \cite{BIMV} and \cite{dumath2}, while the Stepanov generalizations of multi-dimensional quasi-asymptotically $c$-almost periodic type functions are examined in Section \ref{zlatnidecko} (the introduced classes seem to be new and not considered elsewhere even in the case that the exponent $p(\cdot)$ has a constant value). 
The main aim of Section \ref{multiWeylce} is to continue our analysis of Weyl $c$-almost periodic type functions from \cite{BIMV} in the   multi-dimensional setting. Some applications of our results to the abstract Volterra integro-differential equations are presented in Section \ref{apply}; we also provide numerous illustrative examples henceforth. 

We use the standard notation throughout the paper. By $(X,\| \cdot \|)$ and
$(Y,\| \cdot \|_{Y})$ we denote two complex Banach spaces.  By
$L(X,Y)$ we denote the Banach algebra of all bounded linear operators from $X$ into
$Y$ with $L(X,X)$ being denoted $L(X)$. 
The convolution product $\ast$ of measurable functions $f: {\mathbb R}^{n} \rightarrow {\mathbb C}$ and $g: {\mathbb R}^{n} \rightarrow X$ is defined by $(f\ast g)({\bf t}):=\int_{{\mathbb R}^{n}}f({\bf t}-{\bf s})g({\bf s})\,
d{\bf s},$ ${\bf t}\in {\mathbb R}^{n},$ whenever the limit exists; $\langle \cdot, \cdot \rangle$ denotes the usual inner product in ${\mathbb R}^{n}.$ The shorthand
$\chi_{A}(\cdot)$ denotes the characteristic function of a set $A\subseteq {\mathbb R}^{n}.$ If ${\bf t_{0}}\in {\mathbb R}^{n}$ and $\epsilon>0$, then we define $B({\bf t}_{0},\epsilon):=\{{\bf t } \in {\mathbb R}^{n} : |{\bf t}-{\bf t_{0}}| \leq \epsilon\},$
where $|\cdot|$ denotes the Euclidean norm in ${\mathbb R}^{n};$ by $(e_{1},e_{2},\cdot \cdot \cdot,e_{n})$ we denote the standard basis of ${\mathbb R}^{n}.$ Set 
${\mathbb N}_{n}:=\{1,\cdot \cdot \cdot, n\}.$
We will always assume henceforth that ${\mathcal B}$ is a collection of non-empty subsets of $X$ such that, for  every $x\in X,$ there exists $B\in{\mathcal B}$ with $x\in B.$ 

\subsection{Lebesgue spaces with variable exponents
$L^{p(x)}$}\label{karambita}

The basic reference about the Lebesgue spaces with variable exponents
$L^{p(x)}$ is the research monograph \cite{variable} by L. Diening, P. Harjulehto, P. H\"ast\"uso and M. Ruzicka. 

Suppose that $\emptyset \neq \Omega \subseteq {\mathbb R}^{n}$ is a non-empty Lebesgue measurable subset and  
$M(\Omega  : X)$ denotes the collection of all measurable functions $f: \Omega \rightarrow X;$ $M(\Omega):=M(\Omega : {\mathbb R}).$ By ${\mathcal P}(\Omega)$ we denote the vector space of all Lebesgue measurable functions $p : \Omega \rightarrow [1,\infty].$
For any $p\in {\mathcal P}(\Omega)$ and $f\in M(\Omega : X),$ we set
$$
\varphi_{p(x)}(t):=\left\{
\begin{array}{l}
t^{p(x)},\quad t\geq 0,\ \ 1\leq p(x)<\infty,\\ \\
0,\quad 0\leq t\leq 1,\ \ p(x)=\infty,\\ \\
\infty,\quad t>1,\ \ p(x)=\infty 
\end{array}
\right.
$$
and
$$
\rho(f):=\int_{\Omega}\varphi_{p(x)}(\|f(x)\|)\, dx .
$$
We define the Lebesgue space 
$L^{p(x)}(\Omega : X)$ with variable exponent
by
$$
L^{p(x)}(\Omega : X):=\Bigl\{f\in M(\Omega : X): \lim_{\lambda \rightarrow 0+}\rho(\lambda f)=0\Bigr\}.
$$
Equivalently,
\begin{align*}
L^{p(x)}(\Omega : X)=\Bigl\{f\in M(\Omega : X):  \mbox{ there exists }\lambda>0\mbox{ such that }\rho(\lambda f)<\infty\Bigr\};
\end{align*}
see, e.g., \cite[p. 73]{variable}.
For every $u\in L^{p(x)}(\Omega : X),$ we introduce the Luxemburg norm of $u(\cdot)$ by
$$
\|u\|_{p(x)}:=\|u\|_{L^{p(x)}(\Omega :X)}:=\inf\Bigl\{ \lambda>0 : \rho(u/\lambda)    \leq 1\Bigr\}.
$$
Equipped with the above norm, the space $
L^{p(x)}(\Omega : X)$ becomes a Banach space (see e.g. \cite[Theorem 3.2.7]{variable} for the scalar-valued case), coinciding with the usual Lebesgue space $L^{p}(\Omega : X)$ in the case that $p(x)=p\geq 1$ is a constant function.
If $p\in M(\Omega),$ we define
$$
p^{-}:=\text{essinf}_{x\in \Omega}p(x) \ \ \mbox{ and } \ \ p^{+}:=\text{esssup}_{x\in \Omega}p(x).
$$
Set
$$
D_{+}(\Omega ):=\bigl\{ p\in M(\Omega): 1 \leq p^{-}\leq p(x) \leq p^{+} <\infty \mbox{ for a.e. }x\in \Omega \bigr \}.
$$
In the case that $p\in D_{+}(\Omega),$ the space $
L^{p(x)}(\Omega : X)$ behaves nicely, with almost all fundamental properties of the Lesbesgue space with constant exponent $
L^{p}(\Omega : X)$ being retained; in this case, 
$$
L^{p(x)}(\Omega : X)=\Bigl\{f\in M(\Omega : X)  \, ; \,  \mbox{ for all }\lambda>0\mbox{ we have }\rho(\lambda f)<\infty\Bigr\}.
$$

We will use the following lemma (cf. \cite{variable} for the scalar-valued case):

\begin{lem}\label{aux}
\begin{itemize}
\item[(i)] (The H\"older inequality) Let $p,\ q,\ r \in {\mathcal P}(\Omega)$ such that
$$
\frac{1}{q(x)}=\frac{1}{p(x)}+\frac{1}{r(x)},\quad x\in \Omega .
$$
Then, for every $u\in L^{p(x)}(\Omega : X)$ and $v\in L^{r(x)}(\Omega),$ we have $uv\in L^{q(x)}(\Omega : X)$
and
\begin{align*}
\|uv\|_{q(x)}\leq 2 \|u\|_{p(x)}\|v\|_{r(x)}.
\end{align*}
\item[(ii)] Let $\Omega $ be of a finite Lebesgue's measure and let $p,\ q \in {\mathcal P}(\Omega)$ such $q\leq p$ a.e. on $\Omega.$ Then
 $L^{p(x)}(\Omega : X)$ is continuously embedded in $L^{q(x)}(\Omega : X),$ and the constant of embedding is less than or equal to 
$2(1+m(\Omega)).$
\item[(iii)] Let $f\in L^{p(x)}(\Omega : X),$ $g\in M(\Omega : X)$ and $0\leq \|g\| \leq \|f\|$ a.e. on $\Omega .$ Then $g\in L^{p(x)}(\Omega : X)$ and $\|g\|_{p(x)}\leq \|f\|_{p(x)}.$
\item[(iv)] Suppose that $f\in L^{p(x)}(\Omega : X)$ and $A\in L(X,Y).$ 
Then $Af \in L^{p(x)}(\Omega : Y)$ and 
$\|Af\|_{L^{p(x)}(\Omega : Y)}\leq \|A\| \cdot \|f\|_{L^{p(x)}(\Omega : X)}.$
\end{itemize}
\end{lem}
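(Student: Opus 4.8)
The plan is to reduce every assertion to the corresponding scalar-valued statement recorded in \cite{variable} by exploiting a single structural observation: the modular $\rho$, and hence the Luxemburg norm, sees a vector-valued function only through its pointwise norm. Concretely, for $f\in M(\Omega:X)$ set $\tilde{f}(x):=\|f(x)\|$; then $\tilde{f}\in M(\Omega)$, and since $\varphi_{p(x)}$ depends on its argument only through magnitude and $\|(f/\lambda)(x)\|=\tilde f(x)/\lambda$ for every $\lambda>0$, one has $\rho(f/\lambda)=\rho(\tilde f/\lambda)$ for all $\lambda>0$. Consequently $f\in L^{p(x)}(\Omega:X)$ if and only if $\tilde f\in L^{p(x)}(\Omega)$, and in that case $\|f\|_{L^{p(x)}(\Omega:X)}=\|\tilde f\|_{L^{p(x)}(\Omega)}$. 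I would isolate this identity first, as it is the bridge used repeatedly below.

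For (i), I would write $\|u(x)v(x)\|=|v(x)|\,\|u(x)\|$ (valid because $v$ is scalar-valued), so that the pointwise norm of $uv$ is the product of the scalar functions $\tilde u=\|u(\cdot)\|$ and $|v|$. Applying the reduction principle and then the scalar H\"older inequality from \cite{variable} to $\tilde u\in L^{p(x)}(\Omega)$ and $v\in L^{r(x)}(\Omega)$ yields both $uv\in L^{q(x)}(\Omega:X)$ and $\|uv\|_{q(x)}=\| \tilde u |v| \|_{q(x)}\le 2\|\tilde u\|_{p(x)}\|v\|_{r(x)}=2\|u\|_{p(x)}\|v\|_{r(x)}$; the constant $2$ is inherited verbatim from the scalar generalized H\"older inequality. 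Assertion (iii) I would obtain directly from monotonicity of the modular: since $t\mapsto\varphi_{p(x)}(t)$ is nondecreasing and $\|g(x)\|\le\|f(x)\|$ a.e., we get $\rho(g/\lambda)\le\rho(f/\lambda)$ for every $\lambda>0$, whence $g\in L^{p(x)}(\Omega:X)$ and the defining infimum for $\|g\|_{p(x)}$ is taken over a set containing that for $\|f\|_{p(x)}$, giving $\|g\|_{p(x)}\le\|f\|_{p(x)}$.

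Assertion (ii) follows from (i) by the usual device of inserting the constant function $1$. Since $q\le p$ a.e., the pointwise relation $1/r(x):=1/q(x)-1/p(x)\ge 0$ defines an exponent $r\in\mathcal P(\Omega)$, and (i) with $v\equiv 1$ gives $\|u\|_{q(x)}=\|u\cdot 1\|_{q(x)}\le 2\|u\|_{p(x)}\|1\|_{r(x)}$. It then remains to estimate $\|1\|_{r(x)}$ on a set of finite measure; taking $\lambda=\max\{1,m(\Omega)\}$ one checks $\rho(1/\lambda)\le m(\Omega)/\lambda\le 1$ (the points where $r(x)=\infty$ contributing $0$, since there $1/\lambda\le 1$), so $\|1\|_{r(x)}\le 1+m(\Omega)$ and the embedding constant $2(1+m(\Omega))$ results. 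Finally, (iv) is a variant of (iii): from $\|Af(x)\|_{Y}\le\|A\|\,\|f(x)\|$ a.e. and positive homogeneity of the norm, the same modular-monotonicity argument applied to the comparison between $Af$ and $\|A\|f$ yields $Af\in L^{p(x)}(\Omega:Y)$ together with $\|Af\|_{L^{p(x)}(\Omega:Y)}\le\|A\|\,\|f\|_{L^{p(x)}(\Omega:X)}$. The only genuine obstacle is part (i): once the norm-reduction identity is in hand and the scalar H\"older inequality is available, all four items are routine, so the real care goes into verifying that identity cleanly across the three regimes in the definition of $\varphi_{p(x)}$, in particular the $p(x)=\infty$ case.
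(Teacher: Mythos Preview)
Your argument is correct. The paper itself does not prove this lemma at all: it is stated with the parenthetical remark ``cf.\ \cite{variable} for the scalar-valued case'' and then used freely, so there is no proof to compare against. Your reduction principle---that the modular, and hence the Luxemburg norm, of a vector-valued $f$ coincides with that of the scalar function $\tilde f=\|f(\cdot)\|$---is exactly the device one needs to transport the scalar results from \cite{variable} to the Banach-space-valued setting, and the derivations of (i)--(iv) from it are all sound (including the estimate $\|1\|_{r(x)}\le 1+m(\Omega)$ in (ii) and the handling of the $p(x)=\infty$ regime).
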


For further information concerning the Lebesgue spaces with variable exponents
$L^{p(x)},$ we refer the reader to \cite{variable}, \cite{fan-zhao} and \cite{doktor}.  See also \cite{toka-mbape}-
\cite{toka-mbape-prim} and references cited therein. 

\subsection{Almost periodic type functions in ${\mathbb R}^{n}$}\label{stavisub}

Suppose that $F : {\mathbb R}^{n} \rightarrow X$ is a continuous function. Then we say that $F(\cdot)$ is almost periodic if and only if for each $\epsilon>0$
there exists $l>0$ such that for each ${\bf t}_{0} \in {\mathbb R}^{n}$ there exists ${\bf \tau} \in B({\bf t}_{0},l)$ with
\begin{align*}
\bigl\|F({\bf t}+{\bf \tau})-F({\bf t})\bigr\| \leq \epsilon,\quad {\bf t}\in {\mathbb R}^{n}.
\end{align*}
This is equivalent to saying that for any sequence $({\bf b}_n)$ in ${\mathbb R}^{n}$ there exists a subsequence $({\bf a}_{n})$ of $({\bf b}_n)$
such that $(F(\cdot+{\bf a}_{n}))$ converges in $C_{b}({\mathbb R}^{n}: X),$ the Banach space of bounded continuous functions $F :{\mathbb R}^{n}\rightarrow X$ equipeed with the sup-norm. Any trigonometric polynomial in ${\mathbb R}^{n}$ is almost periodic and it is also well known that $F(\cdot)$ is almost periodic if and only if there exists a sequence of trigonometric polynomials in ${\mathbb R}^{n}$ which converges uniformly to $F(\cdot);$
here, by a trigonometric polynomial in ${\mathbb R}^{n}$ we mean any linear combination of functions like ${\bf t}\mapsto e^{i\langle
 {\bf \lambda}, {\bf t} \rangle},$ ${\bf t}\in{\mathbb R}^{n},$ where ${\bf \lambda}\in {\mathbb R}^{n}.$ 
Any almost periodic function $F: {\mathbb R}^{n} \rightarrow X$ is almost periodic with respect to each of the variables but the converse statement is not true in general. Further on,
any almost periodic function $F(\cdot)$ is bounded, uniformly continuous and the mean value
$$
M(F):=\lim_{T\rightarrow +\infty}\frac{1}{T^{n}}\int_{{\bf s}+K_{T}}F({\bf t})\, d{\bf t}
$$
exists and it does not depend on $s\in [0,\infty)^{n};$ here,  
$K_{T}:=\{ {\bf t}=(t_{1},t_{2},\cdot \cdot \cdot,t_{n}) \in {\mathbb R}^{n} :  0\leq t_{i}\leq T\mbox{ for }1\leq i\leq n\}.$ The Bohr-Fourier coefficient $F_{\lambda}\in X$ is defined by \index{Bohr-Fourier coefficient}
$$
F_{\lambda}:=M\Bigl(e^{-i\langle \lambda, {\bf \cdot}\rangle }F(\cdot)\Bigr),\quad \lambda \in {\mathbb R}^{n}.
$$
The Bohr spectrum of $F(\cdot),$ defined by\index{Bohr spectrum}
$$
\sigma(F):=\bigl\{ \lambda \in {\mathbb R}^{n} : F_{\lambda}\neq 0 \bigr\},
$$
is at most a countable set.

If $F : {\mathbb R}^{n} \rightarrow X$ is an almost periodic function, then $F(\cdot)$ is uniformly recurrent, i.e., $F(\cdot)$ is continuous and there exists a sequence $({\bf \tau}_{k})$ in ${\mathbb R}^{n}$ such that $\lim_{k\rightarrow +\infty} |{\bf \tau}_{k}|=+\infty$ and
$$
\lim_{k\rightarrow +\infty}\sup_{{\bf t}\in {\mathbb R}^{n}} \bigl\|F({\bf t}+{\bf \tau}_{k})-F({\bf t})\bigr\| =0.
$$
We say that a function $F : {\mathbb R}^{n} \rightarrow X$ is asymptotically uniformly recurrent if and only if there exist a uniformly recurrent function $G : {\mathbb R}^{n} \rightarrow X$ and a function $Q\in C_{0}({\mathbb R}^{n} : X)$ such that $F({\bf t})=G({\bf t})+Q({\bf t})$ for all ${\bf t}\in {\mathbb R}^{n};$ here, $C_{0}({\mathbb R}^{n} : X)$ denotes the vector space of continuous functions vanishing at zero
when $|{\bf t}| \rightarrow +\infty.$

We need the following definitions from \cite{marko-manuel-ap} and \cite{multi-ce}:

\begin{defn}\label{nafaks123456789012345123}
Suppose that 
${\mathbb D} \subseteq I \subseteq {\mathbb R}^{n},$ $c\in {\mathbb C} \setminus \{0\}$ and the set ${\mathbb D}$ is unbounded, as well as
$\emptyset  \neq I'\subseteq I \subseteq {\mathbb R}^{n},$ $F : I \times X \rightarrow Y$ is a continuous function and $I +I' \subseteq I.$ Then we say that
$F(\cdot;\cdot)$ is ${\mathbb D}$-asymptotically Bohr $({\mathcal B},I',c)$-almost periodic  of type $1$ if and only if for every $B\in {\mathcal B}$ and $\epsilon>0$
there exist $l>0$ and $M>0$ such that for each ${\bf t}_{0} \in I'$ there exists ${\bf \tau} \in B({\bf t}_{0},l) \cap I'$ such that
\begin{align*}
\bigl\|F({\bf t}+{\bf \tau};x)-c F({\bf t};x)\bigr\|_{Y} \leq \epsilon,\mbox{ provided } {\bf t},\ {\bf t}+\tau \in {\mathbb D}_{M},\ x\in B.
\end{align*}
\end{defn}

\begin{defn}\label{kompleks12345}
Suppose that 
${\mathbb D} \subseteq I \subseteq {\mathbb R}^{n}$ and the set ${\mathbb D}$  is unbounded. By $C_{0,{\mathbb D},{\mathcal B}}(I \times X :Y)$ we denote the vector space consisting of all continuous functions $Q : I \times X \rightarrow Y$ such that, for every $B\in {\mathcal B},$ we have $\lim_{t\in {\mathbb D},|t|\rightarrow +\infty}Q({\bf t};x)=0,$ uniformly for $x\in B.$ For any $T>0,$ we set ${\mathbb D}_{T}:=\{ {\bf t}\in {\mathbb D} : |{\bf t}|\geq T \}.$ 
\end{defn}

In our further analyses of Stepanov and Weyl classes, the regions $I$ and $I'$ will be also denoted by $\Lambda$ and $\Lambda',$ respectively (we aim to stay consistent with the notation used in \cite{nova-selected}).

\begin{defn}\label{nafaks-vstepanov}
Suppose that $\omega \subseteq {\mathbb R}^{n}$ is a Lebesgue measurable set with positive Lebesgue measure,
${\mathbb D} \subseteq \Lambda \subseteq {\mathbb R}^{n}$ and the set ${\mathbb D}$ is unbounded, as well as
$\emptyset  \neq  \Lambda'\subseteq  \Lambda \subseteq {\mathbb R}^{n},$ $F :  \Lambda \times X \rightarrow Y$ is a continuous function and $ \Lambda + \Lambda' \subseteq \Lambda.$ Then we say that:
\begin{itemize}
\item[(i)]\index{function!${\mathbb D}$-asymptotically Stepanov $(\Omega,p({\bf u}))$-$({\mathcal B}, \Lambda')$-almost periodic of type $1$}
$F(\cdot;\cdot)$ is Stepanov $(\Omega,p({\bf u}))$-$({\mathcal B}, \Lambda')$-almost periodic of type $1$ if and only if for every $B\in {\mathcal B}$ and $\epsilon>0$
there exist $l>0$ and $M>0$ such that for each ${\bf t}_{0} \in \Lambda'$ there exists ${\bf \tau} \in B({\bf t}_{0},l) \cap \Lambda'$ such that
\begin{align*}
\bigl\|F({\bf t}+{\bf \tau}+{\bf u};x)-F({\bf t}+{\bf u};x)\bigr\|_{L^{p({\bf u})}(\Omega :Y)} \leq \epsilon,\mbox{ provided } {\bf t},\ {\bf t}+\tau \in {\mathbb D}_{M},\ x\in B.
\end{align*}
\item[(ii)] \index{function!${\mathbb D}$-asymptotically Stepanov $(\Omega,p({\bf u}))$-$({\mathcal B}, \Lambda')$-uniformly recurrent  of type $1$}
$F(\cdot;\cdot)$ is ${\mathbb D}$-asymptotically Stepanov $(\Omega,p({\bf u}))$-$({\mathcal B}, \Lambda')$-uniformly recurrent of type $1$ if and only if for every $B\in {\mathcal B}$ 
there exist a sequence $({\bf \tau}_{k})$ in $\Lambda'$ and a sequence $(M_{k})$ in $(0,\infty)$ such that $\lim_{k\rightarrow +\infty} |{\bf \tau}_{k}|=\lim_{k\rightarrow +\infty}M_{k}=+\infty$ and
$$
\lim_{k\rightarrow +\infty}\sup_{{\bf t},{\bf t}+{\bf \tau}_{k}\in {\mathbb D}_{M_{k}};x\in B} \bigl\|F({\bf t}+{\bf \tau}_{k}+{\bf u};x)-F({\bf t}+{\bf u};x)\bigr\|_{L^{p({\bf u})}(\Omega :Y)} =0.
$$
\end{itemize}
If $\Lambda'=\Lambda,$ then we also say that
$F(\cdot;\cdot)$ is ${\mathbb D}$-asymptotically Stepanov $(\Omega,p({\bf u}))$-${\mathcal B}$-almost periodic of type $1$ (${\mathbb D}$-asymptotically Stepanov $(\Omega,p({\bf u}))$-${\mathcal B}$-uniformly recurrent  of type $1$); furthermore, if $X\in {\mathcal B},$ then it is also said that $F(\cdot;\cdot)$ is ${\mathbb D}$-asymptotically Stepanov $(\Omega,p({\bf u}))$-$\Lambda'$-almost periodic  of type $1$ (${\mathbb D}$-asymptotically Stepanov $\Lambda'$-uniformly recurrent  of type $1$). If $\Lambda'=\Lambda$ and $X\in {\mathcal B}$, then we also say that $F(\cdot;\cdot)$ is ${\mathbb D}$-asymptotically Stepanov almost periodic  of type $1$ (${\mathbb D}$-asymptotically Stepanov uniformly recurrent of type $1$). Here and hereafter we will  remove the prefix ``${\mathbb D}$-'' in the case that ${\mathbb D}=\Lambda$ and remove the prefix ``$({\mathcal B},)$''  in the case that $X\in {\mathcal B}.$ 
\end{defn}

\subsection{$({\bf \omega},c)$-Periodic functions and $({\bf \omega}_{j},c_{j})_{j\in {\mathbb N}_{n}}$-periodic functions}\label{krizni-stab}

A
continuous
function $F:I\rightarrow X$ is said to be Bloch $({\bf p},{\bf k})$-periodic, or Bloch
periodic with period ${\bf p}$ and Bloch wave vector or Floquet exponent ${\bf k},$ where ${\bf p}\in {\mathbb R}^{n}$ and ${\bf k}\in {\mathbb R}^{n},$ if and only if 
$
F({\bf t}+{\bf p})=e^{i\langle {\bf k}, {\bf p}\rangle}F({\bf t}),$ ${\bf t}\in I
$ (we assume here that ${\bf p}+I \subseteq I$).
Following the recent research analyses of E. Alvarez, A. G\'omez, M. Pinto \cite{alvarez1} and
E. Alvarez, S. Castillo, M. Pinto \cite{alvarez2}-\cite{alvarez3}, we have recently extended the notion of Bloch $({\bf p},{\bf k})$-periodicity in the following way:

\begin{defn}\label{drasko-presing} (\cite{multi-omega-ce})
Let ${\bf \omega}\in {\mathbb R}^{n} \setminus \{0\},$ $c\in {\mathbb C} \setminus \{0\}$
and 
${\bf \omega}+I \subseteq I$. A continuous
function $F:I\rightarrow X$ is said to be $({\bf \omega},c)$-periodic if and only if 
$
F({\bf t}+{\bf \omega})=cF({\bf t}),$ ${\bf t}\in I.
$ 
\end{defn}

If $F:I\rightarrow X$ is a Bloch $({\bf p},{\bf k})$-periodic function, then $F(\cdot)$ is $({\bf p},c)$-periodic with $c=e^{i\langle {\bf k}, {\bf p}\rangle};$ conversely, if $|c|=1$  and 
$F:I\rightarrow X$ is $({\bf \omega},c)$-periodic, then we can always find a point ${\bf k}\in {\mathbb R}^{n}
$ such that the function $F(\cdot)$ is Bloch $({\bf p},{\bf k})$-periodic. If $c=1,$ resp. $c=-1,$ then we say that  the function $F(\cdot)$ is $\omega$-periodic, resp. $\omega$-anti-periodic.
If $|c|\neq 1$ 
and $F:I\rightarrow X$ is $({\bf \omega},c)$-periodic, then $
F({\bf t}+m{\bf \omega})=c^{m}F({\bf t}),$ ${\bf t}\in I,
$ $m\in {\mathbb N},$ so that the existence of a point ${\bf t}_{0}\in I$ such that $F({\bf t}_{0})\neq 0$ implies $\lim_{m\rightarrow \infty}||F({\bf t}_{0}+m{\bf \omega})||=+\infty,$ provided that $|c|>1$, and 
$\lim_{m\rightarrow \infty}||F({\bf t}_{0}+m{\bf \omega})||=0,$ provided that $|c|<1.$

\begin{defn}\label{drasko-presing1} (\cite{multi-omega-ce})
Let ${\bf \omega}_{j}\in {\mathbb R} \setminus \{0\},$ $c_{j}\in {\mathbb C} \setminus \{0\}$
and 
${\bf \omega}_{j}e_{j}+I \subseteq I$ ($1\leq j\leq n$). A continuous
function $F:I\rightarrow X$ is said to be $({\bf \omega}_{j},c_{j})_{j\in {\mathbb N}_{n}}$-periodic if and only if 
$
F({\bf t}+{\bf \omega}_{j}e_{j})=c_{j}F({\bf t}),$ ${\bf t}\in I,
$ $j\in {\mathbb N}_{n}.$ 
\end{defn} \index{function!$({\bf \omega}_{j},c_{j})_{j\in {\mathbb N}_{n}}$-periodic}

If $c_{j}=1$ for all $j\in {\mathbb N}_{n},$ resp. $c_{j}=-1$ for all $j\in {\mathbb N}_{n},$ then we also say that  the function $F(\cdot)$ is $(\omega_{j})_{j\in {\mathbb N}_{n}}$-periodic, resp. $(\omega_{j})_{j\in {\mathbb N}_{n}}$-anti-periodic. 
The classes of $({\bf \omega},c)$-periodic functions and $({\bf \omega}_{j},c_{j})_{j\in {\mathbb N}_{n}}$-periodic functions are closed under the operation of the pointwise convergence of functions. 

Let $c_{j}\in {\mathbb C} \setminus \{0\}$. Then 
it is said that a continuous function $F:I\rightarrow X$ is $(c_{j})_{j\in {\mathbb N}_{n}}$-periodic if and only if there exist real numbers
${\bf \omega}_{j}\in {\mathbb R} \setminus \{0\}$ such that 
${\bf \omega}_{j}e_{j}+I \subseteq I$ ($1\leq j\leq n$) and the function
$F:I\rightarrow X$ is $({\bf \omega}_{j},c_{j})_{j\in {\mathbb N}_{n}}$-periodic. 
It can be simply verified that the assumption $|c_{j}|=1$ for all $j\in {\mathbb N}_{n}$ implies that any $(c_{j})_{j\in {\mathbb N}_{n}}$-periodic function $F: {\mathbb R}^{n}\rightarrow X$ is
almost periodic. 

In \cite{multi-omega-ce}, we have also introduced the following notion: 

\begin{defn}\label{kakavsam jadebil}
Suppose that ${\mathbb D} \subseteq I \subseteq {\mathbb R}^{n},$ the set ${\mathbb D}$  is unbounded,
${\bf \omega}\in {\mathbb R}^{n} \setminus \{0\},$ $c\in {\mathbb C} \setminus \{0\},$
${\bf \omega}+I \subseteq I,$
${\bf \omega}_{j}\in {\mathbb R} \setminus \{0\},$ $c_{j}\in {\mathbb C} \setminus \{0\},$ 
${\bf \omega}_{j}e_{j}+I \subseteq I$ ($1\leq j\leq n$) and $F : I \times X \rightarrow Y.$ 
Then we say that the function $F(\cdot; \cdot)$ is $({\mathbb D},{\mathcal B})$-asymptotically $({\bf \omega},c)$-periodic, resp. 
$({\mathbb D},{\mathcal B})$-asymptotically $({\bf \omega}_{j},c_{j})_{j\in {\mathbb N}_{n}}$-periodic, if and only if there exist
an $({\bf \omega},c)$-periodic, resp. an $({\bf \omega}_{j},c_{j})_{j\in {\mathbb N}_{n}}$-periodic, function 
$F_{0} : I \times X \rightarrow Y$ (by that we mean that for each fixed element $x\in X$ the function $F(\cdot ;x)$ is $({\bf \omega},c)$-periodic, resp. $({\bf \omega}_{j},c_{j})_{j\in {\mathbb N}_{n}}$-periodic) and a function $Q\in C_{0,{\mathbb D},{\mathcal B}}(I :X)$ such that 
$F({\bf t};x)=F_{0}({\bf t};x)+Q({\bf t};x),$ ${\bf t} \in I,$ $x\in X.$
\end{defn}

Before we proceed to our next section, we would like to note that the notions of $(\omega,c)$-periodicity and $(\omega_{j},c_{j})_{j\in {\mathbb N}_{n}}$-periodicity have 
been generalized in several other directions \cite{multi-omega-ce}; for example, in this paper, we have considered several various classes of
$({\bf \omega}_{j},c_{j}; r_{j}, {\mathbb I}_{j}')_{j\in {\mathbb N}_{n}}$-almost periodic type functions. We will not deal with these classes of functions henceforth.

\section{$(S,{\mathbb D},{\mathcal B})$-asymptotically 
$(\omega,c)$-periodic type functions, $(S,{\mathcal B})$-asymptotically $(\omega_{j},c_{j},{\mathbb D}_{j})_{j\in {\mathbb N}_{n}}$-periodic type functions and semi-$(c_{j},{\mathcal B})_{j\in {\mathbb N}_{n}}$-periodic type functions}\label{profica-jazz}

This section investigates the classes of $(S,{\mathbb D})$-asymptotically
$(\omega,c)$-periodic type functions, $S$-asymptotically $(\omega_{j},c_{j},{\mathbb D}_{j})_{j\in {\mathbb N}_{n}}$-periodic type functions and\\ semi-$(c_{j},{\mathcal B})_{j\in {\mathbb N}_{n}}$-periodic type functions.
In the following two definitions, we extend the recently introduced notion of $S_{c}$-asymptotical periodicity (cf. M. T. Khalladi, M. Kosti\' c, M. Pinto, A. Rahmani and D. Velinov \cite[Definition 3.1]{BIMV}, where the authors have considered the case in which  $X=\{0\}$ and
$I={\mathbb D}={\mathbb D}_{1}$ is ${\mathbb R}$ or $[0,\infty)$)
and its subnotions: the $S$-asymptotical Bloch $(\omega,c)$-periodicity, resp. $S$-asymptotical $\omega$-anti-periodicity (see \cite[Definition 3.1, Definition 3.2]{y-k chang}, where Y.-K. Chang and Y. Wei have considered the particular cases $|c|=1,$ resp. $c=-1,$ $X=\{0\}$ and
$I={\mathbb R}={\mathbb D}={\mathbb D}_{1}$): 

\begin{defn}\label{drasko-presing-12345}
Let ${\bf \omega}\in {\mathbb R}^{n} \setminus \{0\},$ $c\in {\mathbb C} \setminus \{0\},$
${\bf \omega}+I \subseteq I,$ ${\mathbb D} \subseteq I \subseteq {\mathbb R}^{n}$ and the set ${\mathbb D}$ be unbounded. A continuous
function $F:I \times X\rightarrow Y$ is said to be $(S,{\mathbb D},{\mathcal B})$-asymptotically 
$(\omega,c)$-periodic if and only if for each $B\in {\mathcal B}$ we have
\begin{align*}
\lim_{|{\bf t}|\rightarrow +\infty,{\bf t}\in {\mathbb D}}\bigl\| F({\bf t}+\omega ; x)-cF({\bf t} ; x)\bigr\|_{Y}=0,\quad \mbox{ uniformly in }x\in B.
\end{align*}
\end{defn}

\begin{defn}\label{drasko-presing123456}
Let ${\bf \omega}_{j}\in {\mathbb R} \setminus \{0\},$ $c_{j}\in {\mathbb C} \setminus \{0\},$
${\bf \omega}_{j}e_{j}+I \subseteq I$,
${\mathbb D}_{j} \subseteq I \subseteq {\mathbb R}^{n}$ and the set ${\mathbb D}_{j}$ be unbounded ($1\leq j\leq n$).
A continuous
function $F:I \times X\rightarrow Y$ is said to be  $(S,{\mathcal B})$-asymptotically $({\bf \omega}_{j},c_{j},{\mathbb D}_{j})_{j\in {\mathbb N}_{n}}$-periodic if and only if for each $ j\in {\mathbb N}_{n}$ we have
\begin{align*}
\lim_{|{\bf t}|\rightarrow +\infty,{\bf t}\in {\mathbb D}_{j}}\bigl\|F({\bf t}+{\bf \omega}_{j}e_{j};x)-c_{j}F({\bf t};x)\bigr\|_{Y}=0,\quad \mbox{ uniformly in }x\in B.
\end{align*}
\end{defn} 

Before going any further, we will present an illustrative example:

\begin{example}\label{primerchic}
Let $X:=c_{0}({\mathbb C})$ 
be the Banach space of all numerical sequences tending to zero, equipped with the sup-norm. Suppose that $\omega_{j}=2\pi,$
$c_{j}\in {\mathbb C} $ and $|c_{j}|=1$ for all $j\in {\mathbb N}_{n}.$ From \cite[Example 2.12]{multi-omega-ce}, we know that the function 
$$
F_{1}\bigl(t_{1},\cdot \cdot \cdot,t_{n}\bigr):=\prod_{j=1}^{n}c_{j}^{\frac{t_{j}}{2\pi}}\sin t_{j},\quad {\bf t}=(t_{1},\cdot \cdot \cdot,t_{n})\in [0,\infty)^{n}
$$ 
is $(2\pi,c_{j})_{j\in {\mathbb N}_{n}}$-periodic. On the other hand, from \cite[Example 3.1]{pierro} and \cite[Example 2.6]{brazil}, we know that the function 
$$
f(t):=\Biggl(\frac{4k^{2}t^{2}}{(t^{2}+k^{2})^{2}} \Biggr)_{k\in {\mathbb N}},\ t\geq 0
$$
is $S$-asymptotically $\omega$-periodic for any positive real number $\omega>0,$ as well as that its range
is not relatively compact in $X$ and $f(\cdot)$ is uniformly continuous; let us only note here that R. Xie and C. Zhang
have constructed, in \cite[Example 17]{xie}, an example of an
$S$-asymptotically $\omega$-periodic function which is not uniformly continuous. Set
$$
F\bigl(t_{1},\cdot \cdot \cdot,t_{n},t_{n+1}\bigr):=F_{1}\bigl(t_{1},\cdot \cdot \cdot,t_{n}\bigr) \cdot f\bigl(t_{n+1}\bigr),\quad \bigl(t_{1},\cdot \cdot \cdot,t_{n},t_{n+1}\bigr) \in [0,\infty)^{n+1}.
$$
Then the function $F(\cdot)$ is $S$-asymptotically $({\bf \omega}_{j},c_{j},{\mathbb D}_{j})_{j\in {\mathbb N}_{n+1}}$-periodic, where $c_{n+1}=1,$ $\omega_{n+1}>0$ being arbitrary, ${\mathbb D}_{j}=[0,\infty)^{n+1}$ for $1\leq j\leq n$ and 
${\mathbb D}_{n+1}=K\times [0,\infty)$ ($\emptyset \neq K \subseteq [0,\infty)^{n}$ is a compact set), as easily approved. See also \cite[Example 2.16, Example 2.17, Example 2.18]{brazil}.
\end{example}

Immediately from the corresponding definitions, we have the following result:

\begin{prop}\label{nulti-jax}
\begin{itemize}
\item[(i)] Let ${\bf \omega}\in {\mathbb R}^{n} \setminus \{0\},$ $c\in {\mathbb C} \setminus \{0\},$
${\bf \omega}+I \subseteq I,$ ${\mathbb D} \subseteq I \subseteq {\mathbb R}^{n}$ and the set ${\mathbb D}$ be unbounded. If $\omega +{\mathbb D} \subseteq {\mathbb D}$ and the function $F: I\times X \rightarrow Y$ is 
$({\mathbb D},{\mathcal B})$-asymptotically $({\bf \omega},c)$-periodic, then 
the function $F(\cdot; \cdot)$ is 
$(S,{\mathbb D},{\mathcal B})$-asymptotically 
$(\omega,c)$-periodic.
\item[(ii)] Let ${\bf \omega}_{j}\in {\mathbb R} \setminus \{0\},$ $c_{j}\in {\mathbb C} \setminus \{0\},$
${\bf \omega}_{j}e_{j}+I \subseteq I$,
${\mathbb D}_{j} \subseteq I \subseteq {\mathbb R}^{n}$ and the set ${\mathbb D}_{j}$ be unbounded ($1\leq j\leq n$).
If $\omega e_{j} +{\mathbb D} \subseteq {\mathbb D}$ and the function $F: I\times X\rightarrow Y$ is 
$({\mathbb D},{\mathcal B})$-asymptotically $({\bf \omega}_{j},c_{j})_{j\in {\mathbb N}_{n}}$-periodic, then the function $F(\cdot ; \cdot)$ is 
$(S,{\mathcal B})$-asymptotically $({\bf \omega}_{j},c_{j},{\mathbb D}_{j})_{j\in {\mathbb N}_{n}}$-periodic with ${\mathbb D}_{j}\equiv {\mathbb D}$ for all $j\in {\mathbb N}_{n}.$
\end{itemize}
\end{prop}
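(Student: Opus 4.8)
The plan is to exploit the additive decomposition supplied by Definition~\ref{kakavsam jadebil} and to observe that the $(\omega,c)$-periodic part cancels out \emph{exactly} in the defining difference of Definition~\ref{drasko-presing-12345}, so that everything reduces to the decay of the remainder term. For part (i), I would first invoke the hypothesis that $F(\cdot;\cdot)$ is $({\mathbb D},{\mathcal B})$-asymptotically $({\bf \omega},c)$-periodic to write $F({\bf t};x)=F_{0}({\bf t};x)+Q({\bf t};x)$ for all ${\bf t}\in I$ and $x\in X,$ where $F_{0}(\cdot;x)$ is $({\bf \omega},c)$-periodic for each fixed $x$ and $Q\in C_{0,{\mathbb D},{\mathcal B}}(I\times X:Y).$ Since $\omega+I\subseteq I,$ the point ${\bf t}+{\bf \omega}$ lies in $I$ whenever ${\bf t}\in I,$ so $F({\bf t}+{\bf \omega};x)$ is well defined; substituting the decomposition and using $F_{0}({\bf t}+{\bf \omega};x)=cF_{0}({\bf t};x)$ gives
\begin{align*}
F({\bf t}+{\bf \omega};x)-cF({\bf t};x)=Q({\bf t}+{\bf \omega};x)-cQ({\bf t};x),
\end{align*}
so the $F_{0}$-contribution disappears and the whole matter is reduced to estimating the right-hand side.

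Next I would fix $B\in{\mathcal B}$ and $\epsilon>0.$ Since $Q\in C_{0,{\mathbb D},{\mathcal B}}(I\times X:Y),$ there is $T>0$ such that $\|Q({\bf s};x)\|_{Y}\leq \epsilon$ whenever ${\bf s}\in{\mathbb D},$ $|{\bf s}|\geq T$ and $x\in B.$ For the second term on the right this is immediate once ${\bf t}\in{\mathbb D}$ and $|{\bf t}|\geq T.$ The key point—and the only place where the extra hypothesis is needed—is the first term: here I would use $\omega+{\mathbb D}\subseteq{\mathbb D}$ to guarantee ${\bf t}+{\bf \omega}\in{\mathbb D}$ whenever ${\bf t}\in{\mathbb D},$ together with the triangle inequality $|{\bf t}+{\bf \omega}|\geq|{\bf t}|-|{\bf \omega}|,$ so that $|{\bf t}|\geq T+|{\bf \omega}|$ forces $|{\bf t}+{\bf \omega}|\geq T$ and hence $\|Q({\bf t}+{\bf \omega};x)\|_{Y}\leq\epsilon.$ Combining the two estimates yields $\|F({\bf t}+{\bf \omega};x)-cF({\bf t};x)\|_{Y}\leq(1+|c|)\epsilon$ uniformly for $x\in B$ once ${\bf t}\in{\mathbb D}$ and $|{\bf t}|\geq T+|{\bf \omega}|.$ As $\epsilon>0$ is arbitrary, this is exactly the conclusion of Definition~\ref{drasko-presing-12345}.

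Part (ii) follows by the same argument carried out separately for each index $j\in{\mathbb N}_{n}.$ Writing $F=F_{0}+Q$ with $F_{0}(\cdot;x)$ now $({\bf \omega}_{j},c_{j})_{j\in{\mathbb N}_{n}}$-periodic, the identity $F_{0}({\bf t}+{\bf \omega}_{j}e_{j};x)=c_{j}F_{0}({\bf t};x)$ again cancels the periodic part and leaves
\begin{align*}
F({\bf t}+{\bf \omega}_{j}e_{j};x)-c_{j}F({\bf t};x)=Q({\bf t}+{\bf \omega}_{j}e_{j};x)-c_{j}Q({\bf t};x),
\end{align*}
after which the shift-invariance ${\bf \omega}_{j}e_{j}+{\mathbb D}\subseteq{\mathbb D}$ (with ${\mathbb D}_{j}\equiv{\mathbb D}$) lets one repeat the decay estimate of the previous paragraph verbatim.

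I expect no genuine obstacle here; the argument is entirely a matter of exact cancellation followed by an $\eps$-management of the two remainder terms. The only subtleties worth flagging are, first, that the shifted argument must be kept inside the \emph{unbounded} test region while its norm still diverges—this is precisely what $\omega+{\mathbb D}\subseteq{\mathbb D}$ and the triangle inequality secure—and, second, that in part (ii) one single remainder $Q$ must serve simultaneously for all $j,$ which is automatic since the decomposition $F=F_{0}+Q$ is one and the same for every index.
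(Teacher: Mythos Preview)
Your argument is correct and is exactly the unpacking of ``immediately from the corresponding definitions'' that the paper has in mind: decompose $F=F_{0}+Q$, use the exact $(\omega,c)$-periodicity of $F_{0}$ to cancel, and then control the two $Q$-terms via $Q\in C_{0,{\mathbb D},{\mathcal B}}$ together with $\omega+{\mathbb D}\subseteq{\mathbb D}$. There is nothing to add.
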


We will provide the proof of the first part of the following simple result 
for the sake of completeness:

\begin{prop}\label{piksi}
\begin{itemize}
\item[(i)] Let ${\bf \omega}\in {\mathbb R}^{n} \setminus \{0\},$ $c\in {\mathbb C} \setminus \{0\},$ $\omega +I\subseteq I,$
${\mathbb D} \subseteq I \subseteq {\mathbb R}^{n}$ and the set ${\mathbb D}$ be unbounded. If for each $B\in {\mathcal B}$ there exists $\epsilon_{B}>0$ such that
the sequence $(F_{k}(\cdot ;\cdot))$ of $(S,{\mathbb D},{\mathcal B})$-asymptotically 
$(\omega,c)$-periodic functions 
converges uniformly to a function $F(\cdot ;\cdot)$ on the set $B^{\circ} \cup \bigcup_{x\in \partial B}B(x,\epsilon_{B}),$ then $F(\cdot;\cdot)$ is $(S,{\mathbb D},{\mathcal B})$-asymptotically 
$(\omega,c)$-periodic.
\item[(ii)] Let ${\bf \omega}_{j}\in {\mathbb R} \setminus \{0\},$ $c_{j}\in {\mathbb C} \setminus \{0\},$
${\bf \omega}_{j}e_{j}+I \subseteq I$,
${\mathbb D}_{j} \subseteq I \subseteq {\mathbb R}^{n}$ and the set ${\mathbb D}_{j}$ be unbounded ($1\leq j\leq n$). 
If for each $B\in {\mathcal B}$ there exists $\epsilon_{B}>0$ such that
the sequence $(F_{k}(\cdot ;\cdot))$ of $(S,{\mathcal B})$-asymptotically $({\bf \omega}_{j},c_{j},{\mathbb D}_{j})_{j\in {\mathbb N}_{n}}$-periodic functions converges uniformly to a function $F(\cdot ;\cdot)$ on the set $B^{\circ} \cup \bigcup_{x\in \partial B}B(x,\epsilon_{B}),$ then the function $F(\cdot;\cdot)$ is $(S,{\mathcal B})$-asymptotically\\ $({\bf \omega}_{j},c_{j},{\mathbb D}_{j})_{j\in {\mathbb N}_{n}}$-periodic.
\end{itemize}
\end{prop}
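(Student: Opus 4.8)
The plan is to prove part (i) directly from Definition~\ref{drasko-presing-12345}, exploiting the uniform convergence of the sequence $(F_{k})$ to transfer the defining limit from each $F_{k}$ to $F$. Fix an arbitrary $B\in {\mathcal B}$ and $\epsilon>0$; by hypothesis there is $\epsilon_{B}>0$ such that $(F_{k})$ converges to $F$ uniformly on the enlarged set $S_{B}:=B^{\circ} \cup \bigcup_{x\in \partial B}B(x,\epsilon_{B})$. The point of using $S_{B}$ rather than $B$ itself is that we must control $F({\bf t}+\omega;x)-cF({\bf t};x)$ for $x\in B$, and both evaluations require the variable to range over $B$; since $B\subseteq S_{B}$, uniform convergence on $S_{B}$ gives uniform convergence at every $x\in B$, which is all we actually need. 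First I would choose $k$ so large that $\sup_{x\in B}\|F_{k}({\bf t};x)-F({\bf t};x)\|_{Y}$ is small, uniformly in ${\bf t}$; this is where the uniform (in $x$) convergence enters.

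Next I would carry out the standard triangle-inequality estimate. Write
\begin{align*}
\bigl\|F({\bf t}+\omega;x)-cF({\bf t};x)\bigr\|_{Y}
&\leq \bigl\|F({\bf t}+\omega;x)-F_{k}({\bf t}+\omega;x)\bigr\|_{Y}\\
&\quad +\bigl\|F_{k}({\bf t}+\omega;x)-cF_{k}({\bf t};x)\bigr\|_{Y}
+\abs{c}\bigl\|F_{k}({\bf t};x)-F({\bf t};x)\bigr\|_{Y}.
\end{align*}
For a fixed large $k$, the first and third terms are each bounded by a small quantity uniformly in ${\bf t}$ and in $x\in B$, by the choice of $k$ above (note $\abs{c}$ is a fixed constant, so it does no harm). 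The middle term is handled by applying Definition~\ref{drasko-presing-12345} to the single function $F_{k}$: since $F_{k}$ is $(S,{\mathbb D},{\mathcal B})$-asymptotically $(\omega,c)$-periodic, we have $\lim_{\abs{{\bf t}}\rightarrow +\infty,{\bf t}\in {\mathbb D}}\|F_{k}({\bf t}+\omega;x)-cF_{k}({\bf t};x)\|_{Y}=0$ uniformly in $x\in B$, so it is smaller than $\epsilon/3$ whenever $\abs{{\bf t}}\geq M$ for some $M>0$ and ${\bf t}\in {\mathbb D}$. Combining the three bounds yields $\|F({\bf t}+\omega;x)-cF({\bf t};x)\|_{Y}<\epsilon$ for all such ${\bf t}$ and all $x\in B$, which is exactly the condition in Definition~\ref{drasko-presing-12345}.

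One technical point worth recording is the order of quantifiers: one must fix $k$ first (using uniform convergence, with an error allocation of $\epsilon/3$ to each of the two convergence terms) and only then invoke the asymptotic limit for that fixed $F_{k}$ to obtain $M$. Since $F$ is a uniform limit of continuous functions, it is itself continuous, so $F$ is an admissible candidate for the definition. I do not expect any genuine obstacle here: the argument is the familiar $\epsilon/3$ device, and the only subtlety is purely bookkeeping, namely ensuring that the uniform convergence is used on a domain ($S_{B}\supseteq B$) large enough to cover every evaluation point that appears. The proof of part (ii) is entirely analogous, replacing $\omega$ by $\omega_{j}e_{j}$, $c$ by $c_{j}$, and ${\mathbb D}$ by ${\mathbb D}_{j}$, and running the same estimate separately for each $j\in {\mathbb N}_{n}$; this is why the statement says the proof of only the first part is given for completeness.
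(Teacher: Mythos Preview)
Your proposal is correct and follows essentially the same approach as the paper: fix $k_{0}$ using uniform convergence (with the error scaled by $1/(1+|c|)$ to absorb the factor $|c|$), then invoke the $(S,{\mathbb D},{\mathcal B})$-asymptotic $(\omega,c)$-periodicity of $F_{k_{0}}$ to get $M$, and combine via the three-term triangle inequality; continuity of $F$ is handled by reference to \cite[Proposition 2.7, Proposition 2.8]{marko-manuel-ap}. Your decomposition is in fact written more cleanly than the paper's, which contains a couple of evident typos in the displayed estimate.
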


\begin{proof}
The validity of (i) can be deduced as follows. By the proofs of \cite[Proposition 2.7, Proposition 2.8]{marko-manuel-ap}, it follows that the function $F(\cdot;\cdot)$ is continuous. Let $\epsilon>0$ and $B\in {\mathcal B}$ be fixed. Then there exists $k_{0}\in {\mathbb N}$ such that $\| F_{k_{0}}({\bf t};x)-F({\bf t};x)\|_{Y} \leq \epsilon/3(1+|c|)$ for all $({\bf t},x) \in I \times B.$ Further on, there exists $M>0$ such that the assumptions $|{\bf t}|>M,$ ${\bf t}\in {\mathbb D}$ and $x\in B$ imply $\| F_{k_{0}}({\bf t}+\omega ; x)-cF_{k_{0}}({\bf t} ; x)\|_{Y}<\epsilon /3.$ Then the final conclusion follows from the well known decomposition and estimates
\begin{align*}
& \bigl\|F({\bf t}+\omega ; x)-cF({\bf t} ; x) \bigr\|_{Y}
\\& \leq  \bigl\|F({\bf t}+\omega ; x)-F_{k_{0}}({\bf t} ; x) \bigr\|_{Y}
+\bigl\| F_{k_{0}}({\bf t}+\omega ; x)-cF_{k_{0}}({\bf t} ; x)\bigr\|_{Y}
\\& +|c| \cdot \bigl\|F_{k_{0}}({\bf t}+\omega ; x)-cF_{k_{0}}({\bf t} ; x) \bigr\|_{Y}\leq 3\cdot (\epsilon/3)=\epsilon.
\end{align*}
\end{proof}

The convolution invariance of function spaces introduced in Definition \ref{drasko-presing-12345} and Definition \ref{drasko-presing123456} can be shown under very mild assumptions:

\begin{thm}\label{milenko}
Suppose that $h\in L^{1}({\mathbb R}^{n})$ and $F : {\mathbb R}^{n} \times X \rightarrow Y$ is a continuous function satisfying that for each $B\in {\mathcal B}$ there exists a finite real number $\epsilon_{B}>0$ such that
$\sup_{{\bf t}\in {\mathbb R}^{n},x\in B^{\cdot}}\|F({\bf t},x)\|_{Y}<+\infty,$
where $B^{\cdot} \equiv B^{\circ} \cup \bigcup_{x\in \partial B}B(x,\epsilon_{B}).$ 
\begin{itemize}
\item[(i)] Suppose that ${\mathbb D}={\mathbb R}^{n}.$ Then the function
\begin{align}\label{gariprekrsaj}
(h\ast F)({\bf t};x):=\int_{{\mathbb R}^{n}}h(\sigma) F({\bf t}-\sigma;x)\, d\sigma,\quad {\bf t}\in {\mathbb R}^{n},\ x\in X 
\end{align}
is well defined and for each $B\in {\mathcal B}$ we have $\sup_{{\bf t}\in {\mathbb R}^{n},x\in B^{\cdot}}\|(h\ast F)({\bf t};x)\|_{Y}<+\infty;$ furthermore, if $F(\cdot;\cdot)$ is $(S, {\mathbb R}^{n},{\mathcal B})$-asymptotically 
$(\omega,c)$-periodic, then the function $(h\ast F)(\cdot;\cdot)$ is $(S, {\mathbb R}^{n},{\mathcal B})$-asymptotically 
$(\omega,c)$-periodic.
\item[(ii)] Suppose that ${\mathbb D}_{j}={\mathbb R}^{n}$ for all $j\in {\mathbb N}_{n}.$ Then the function $(h\ast F)(\cdot;\cdot),$ given by \eqref{gariprekrsaj}, is well defined and for each $B\in {\mathcal B}$ we have $\sup_{{\bf t}\in {\mathbb R}^{n},x\in B^{\cdot}}\|(h\ast F)({\bf t};x)\|_{Y}<+\infty;$ moreover, if the function $F(\cdot;\cdot)$ is $(S,{\mathcal B})$-asymptotically\\ $({\bf \omega}_{j},c_{j},{\mathbb R}^{n})_{j\in {\mathbb N}_{n}}$-periodic, then the function $(h\ast F)(\cdot;\cdot)$ is likewise $(S,{\mathcal B})$-asymptotically $({\bf \omega}_{j},c_{j},{\mathbb R}^{n})_{j\in {\mathbb N}_{n}}$-periodic.
\end{itemize}
\end{thm}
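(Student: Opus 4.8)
The plan is to verify the two required properties of the convolution $(h \ast F)(\cdot;\cdot)$ in turn: first its well-definedness together with the uniform boundedness on each $B^{\cdot}$, and then the preservation of the $S$-asymptotic periodicity. Both parts (i) and (ii) share the same mechanism, so I would write the argument for (i) in full and indicate that (ii) follows verbatim by replacing $\omega$ and $c$ with $\omega_j e_j$ and $c_j$ and working with the set ${\mathbb D}_j = {\mathbb R}^n$ for each fixed $j \in {\mathbb N}_n$.

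For well-definedness and boundedness, fix $B \in {\mathcal B}$ and let $\epsilon_B > 0$ be the radius supplied by the hypothesis, so that $C_B := \sup_{{\bf s}\in {\mathbb R}^{n},\, x\in B^{\cdot}}\|F({\bf s};x)\|_Y < +\infty$. For ${\bf t}\in {\mathbb R}^n$ and $x \in B^{\cdot}$, the integrand $\sigma \mapsto h(\sigma)F({\bf t}-\sigma;x)$ is dominated in norm by $C_B\,|h(\sigma)|$, which is integrable since $h \in L^1({\mathbb R}^n)$; hence the Bochner integral in \eqref{gariprekrsaj} converges and
\begin{align*}
\bigl\|(h\ast F)({\bf t};x)\bigr\|_Y \le \int_{{\mathbb R}^n}|h(\sigma)|\,\bigl\|F({\bf t}-\sigma;x)\bigr\|_Y\,d\sigma \le C_B\,\|h\|_{L^1({\mathbb R}^n)},
\end{align*}
which is the desired uniform bound on $B^{\cdot}$. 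Continuity of $(h\ast F)(\cdot;\cdot)$ follows from the dominated convergence theorem applied with the integrable majorant $C_B\,|h(\cdot)|$, using the continuity of $F$.

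For the asymptotic periodicity, fix $B \in {\mathcal B}$ and $\epsilon > 0$. Using the linearity of the integral I would first write the key identity
\begin{align*}
(h\ast F)({\bf t}+\omega;x)-c\,(h\ast F)({\bf t};x)=\int_{{\mathbb R}^n}h(\sigma)\bigl[F({\bf t}+\omega-\sigma;x)-c\,F({\bf t}-\sigma;x)\bigr]\,d\sigma,
\end{align*}
so that the bracketed term is exactly the quantity that tends to zero by the $(S,{\mathbb R}^n,{\mathcal B})$-asymptotic $(\omega,c)$-periodicity of $F$. The standard difficulty, and the step I expect to be the main obstacle, is that the convergence ${\bf t}-\sigma \to \infty$ is not uniform in $\sigma$ over all of ${\mathbb R}^n$; one cannot simply pull the limit inside the integral. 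I would resolve this by the usual $L^1$ tail-splitting: choose $R > 0$ with $\int_{|\sigma|>R}|h(\sigma)|\,d\sigma < \epsilon/(4(1+|c|)C_B)$, and on the compact region $|\sigma|\le R$ use the asymptotic property of $F$ to find $M>0$ such that $|{\bf t}|>M+R$ forces $|{\bf t}-\sigma|$ large enough that $\|F({\bf t}+\omega-\sigma;x)-cF({\bf t}-\sigma;x)\|_Y < \epsilon/(2\|h\|_{L^1}+1)$ uniformly for $x\in B$ and $|\sigma|\le R$. Splitting the integral over $\{|\sigma|\le R\}$ and $\{|\sigma|>R\}$, bounding the first piece by the small pointwise difference times $\|h\|_{L^1}$ and the second by $2(1+|c|)C_B$ times the small $L^1$-tail, yields that the left-hand side is at most $\epsilon$ for all ${\bf t}\in {\mathbb R}^n$ with $|{\bf t}|>M+R$ and all $x\in B$. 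Since $\epsilon$ and $B$ were arbitrary, this is precisely the condition in Definition \ref{drasko-presing-12345}, completing part (i); part (ii) is identical.
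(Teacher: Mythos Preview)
Your proposal is correct and follows essentially the same route as the paper's proof: both arguments split the convolution integral into a region where the $(S,\mathbb{R}^n,\mathcal{B})$-asymptotic $(\omega,c)$-periodicity of $F$ makes the difference $\|F(\cdot+\omega;x)-cF(\cdot;x)\|_Y$ small, and a complementary region controlled by the $L^1$-tail of $h$ together with the uniform bound $(1+|c|)C_B$ on the $F$-difference. The only cosmetic difference is that the paper first performs the substitution $\sigma\mapsto {\bf t}-\sigma$ and then splits according to the size of the new variable (so the kernel becomes $h({\bf t}-\sigma)$ and the tail of $h$ is reached by taking $|{\bf t}|$ large), whereas you split directly in the original convolution variable and push $|{\bf t}-\sigma|$ past the threshold on the inner ball; these are the same estimate read in two coordinate systems.
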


\begin{proof}
We will prove only (i). It is clear that the function $(h\ast F)(\cdot;\cdot)$ is well defined as well as that $\sup_{{\bf t}\in {\mathbb R}^{n},x\in B^{\cdot}}\|(h\ast F)({\bf t};x)\|_{Y}<+\infty$ for all $B\in {\mathcal B}$. Its continuity at the fixed point $({\bf t}_{0};x_{0}) \in {\mathbb R}^{n} \times X$ follows from the existence of a set $B\in {\mathcal B}$ such that $x_{0}\in B,$ the assumption $\sup_{{\bf t}\in {\mathbb R}^{n},x\in B^{\cdot}}\|F({\bf t};x)\|_{Y}<+\infty$  and the dominated convergence theorem. Let $\epsilon>0$ and $B\in {\mathcal B}$ be fixed. Then there exists a sufficiently large real number $M>0$ such that $\| F({\bf t}+\omega ; x)-cF({\bf t};x)\|_{Y}<\epsilon/2,$ provided $|{\bf t}|>M_{1}$ and $x\in B.$
Therefore, there exists a finite constant $c_{B}\geq 1$ such that
\begin{align*}
\Bigl\| & (h\ast F)({\bf t}+\omega ; x)-c(h\ast F)({\bf t};x)\Bigr\|_{Y}
\\& \leq \int_{{\mathbb R}^{n}}|h(\sigma)| \cdot \bigl\| F({\bf t}+\omega -\sigma; x)-cF({\bf t}-\sigma;x)\bigr\|_{Y} \, d\sigma
\\& = \int_{|\sigma| \leq M_{1}}|h({\bf t}-\sigma)| \cdot \bigl\| F(\sigma+\omega ; x)-cF(\sigma;x)\bigr\|_{Y} \, d\sigma 
\\& +\int_{|\sigma| \geq M_{1}}|h({\bf t}-\sigma)| \cdot \bigl\| F(\sigma+\omega ; x)-cF(\sigma ;x)\bigr\|_{Y} \, d\sigma
\\& \leq \epsilon/2 +\int_{|\sigma| \geq M_{1}}|h({\bf t}-\sigma)| \cdot \bigl\| F(\sigma+\omega ; x)-cF(\sigma;x)\bigr\|_{Y} \, d\sigma
\\& \leq \epsilon/2 +c_{B}\int_{|\sigma| \geq M_{1}}|h({\bf t}-\sigma)|  \, d\sigma.
\end{align*}
On the other hand, there exists a finite real number $M_{2}>0$ such that 
\\ $\int_{|\sigma| \geq M_{2}}|h(\sigma)|\, d\sigma <\epsilon/2c_{B}.$ If $|{\bf t}|>M_{1}+M_{2},$ then for each $\sigma \in {\mathbb R}^{n}$ with $|\sigma| \leq M_{1}$ we have $|{\bf t}-\sigma|\geq M_{2}.$ 
This simply implies the required conclusion.
\end{proof}

The following result connects the notion introduced in Definition \ref{drasko-presing-12345} and Definition \ref{drasko-presing123456}:

\begin{prop}\label{drale}
Let ${\bf \omega}_{j}\in {\mathbb R} \setminus \{0\},$ $c_{j}\in {\mathbb C} \setminus \{0\},$
${\bf \omega}_{j}e_{j}+I \subseteq I,$ 
${\mathbb D}_{j} \subseteq I \subseteq {\mathbb R}^{n}$ and the set ${\mathbb D}_{j}$ be unbounded ($1\leq j\leq n$).
If $F:I \times X \rightarrow Y$ is $(S,{\mathcal B})$-asymptotically $({\bf \omega}_{j},c_{j},{\mathbb D}_{j})_{j\in {\mathbb N}_{n}}$-periodic and the set ${\mathbb D}$ consisting of all tuples ${\bf t}\in {\mathbb D}_{n}$ such that ${\bf t}+\sum_{i=j+1}^{n}\omega_{i}e_{i}$ for all $j\in {\mathbb N}_{n-1}$
is unbounded in ${\mathbb R}^{n}$, then the function $F(\cdot;\cdot)$ is $(S,{\mathbb D},{\mathcal B})$-asymptotically 
$(\omega,c)$-periodic, with $\omega:=\sum_{j=1}^{n}\omega_{j}e_{j} $ and
$c:=\prod_{j=1}^{n}c_{j}.$ 
\end{prop}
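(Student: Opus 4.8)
The plan is to establish the conclusion by a finite telescoping that strips off the shifts $\omega_{j}e_{j}$ one index at a time, invoking the hypothesis separately for each $j\in {\mathbb N}_{n}$. Concretely, I would set $P_{k}:=\prod_{j=1}^{k-1}c_{j}$ (so that $P_{1}=1$, $P_{n+1}=c$ and $P_{k}c_{k}=P_{k+1}$) and, for a fixed ${\bf t}\in I$, introduce the intermediate points ${\bf s}_{k}:={\bf t}+\sum_{j=k+1}^{n}\omega_{j}e_{j}$ for $0\le k\le n$, so that ${\bf s}_{0}={\bf t}+\omega$, ${\bf s}_{n}={\bf t}$ and ${\bf s}_{k-1}={\bf s}_{k}+\omega_{k}e_{k}$. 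With this notation the whole proof rests on the elementary identity
\begin{align*}
F({\bf t}+\omega;x)-cF({\bf t};x)=\sum_{k=1}^{n}P_{k}\bigl[F({\bf s}_{k}+\omega_{k}e_{k};x)-c_{k}F({\bf s}_{k};x)\bigr],
\end{align*}
which I would verify by noting that $P_{k}c_{k}=P_{k+1}$ makes the sum collapse to $P_{1}F({\bf s}_{0};x)-P_{n+1}F({\bf s}_{n};x)=F({\bf t}+\omega;x)-cF({\bf t};x)$.

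The key observation is that the $k$-th summand is precisely a difference of the form governed by the hypothesis for the index $k$, evaluated at the point ${\bf s}_{k}$, and that the auxiliary set ${\mathbb D}$ is defined exactly so that ${\bf s}_{k}$ lands in ${\mathbb D}_{k}$. Indeed, for $k=n$ one has ${\bf s}_{n}={\bf t}\in {\mathbb D}_{n}$ because ${\bf t}\in {\mathbb D}\subseteq {\mathbb D}_{n}$, while for $1\le k\le n-1$ the membership ${\bf s}_{k}={\bf t}+\sum_{j=k+1}^{n}\omega_{j}e_{j}\in {\mathbb D}_{k}$ is exactly the defining relation of ${\mathbb D}$. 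Moreover, since each ${\bf s}_{k}$ differs from ${\bf t}$ by the fixed vector $\sum_{j=k+1}^{n}\omega_{j}e_{j}$, one has $|{\bf s}_{k}|\ge |{\bf t}|-|\omega|$, so that $|{\bf s}_{k}|\to +\infty$ whenever $|{\bf t}|\to +\infty$.

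With these two facts the estimate is routine. Fixing $B\in {\mathcal B}$ and $\epsilon>0$, the $(S,{\mathcal B})$-asymptotic $(\omega_{k},c_{k},{\mathbb D}_{k})$-periodicity furnishes, for each $k$, a number $M_{k}>0$ such that $\|F({\bf s}+\omega_{k}e_{k};x)-c_{k}F({\bf s};x)\|_{Y}<\epsilon\bigl(1+\sum_{l=1}^{n}|P_{l}|\bigr)^{-1}$ whenever ${\bf s}\in {\mathbb D}_{k}$, $|{\bf s}|>M_{k}$ and $x\in B$. Taking $M:=|\omega|+\max_{1\le k\le n}M_{k}$, the assumptions ${\bf t}\in {\mathbb D}$ and $|{\bf t}|>M$ force ${\bf s}_{k}\in {\mathbb D}_{k}$ and $|{\bf s}_{k}|>M_{k}$ for every $k$; applying the triangle inequality to the displayed identity then yields $\|F({\bf t}+\omega;x)-cF({\bf t};x)\|_{Y}<\epsilon$ uniformly for $x\in B$. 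Combined with the continuity of $F$, the inclusion $\omega+I\subseteq I$ (obtained by iterating $\omega_{j}e_{j}+I\subseteq I$), and the assumed unboundedness of ${\mathbb D}$, this is precisely the assertion that $F(\cdot;\cdot)$ is $(S,{\mathbb D},{\mathcal B})$-asymptotically $(\omega,c)$-periodic with $\omega=\sum_{j=1}^{n}\omega_{j}e_{j}$ and $c=\prod_{j=1}^{n}c_{j}$.

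I do not foresee a genuine obstacle here: the only point demanding care is the bookkeeping that pairs each telescoping step with the correct set ${\mathbb D}_{k}$, which is exactly the reason the set ${\mathbb D}$ is defined through the relations ${\bf t}+\sum_{j=k+1}^{n}\omega_{j}e_{j}\in {\mathbb D}_{k}$. Once this matching is in place, the argument is a finite sum of terms, each of which tends to zero by hypothesis.
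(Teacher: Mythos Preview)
Your proof is correct and is essentially the same telescoping argument the paper gives: the paper writes out the same chain of inequalities peeling off one shift $\omega_{j}e_{j}$ at a time with the accumulated factors $|c_{1}|\cdots|c_{j-1}|$, and simply declares that the conclusion follows from these estimates and the definitions. You have merely made explicit the bookkeeping (the constants $P_{k}$, the intermediate points ${\bf s}_{k}$, the membership ${\bf s}_{k}\in{\mathbb D}_{k}$, and the $\epsilon$-management) that the paper leaves to the reader.
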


\begin{proof}
The proof simply follows from the corresponding definitions and the next estimates:
\begin{align*}
&\bigl\| F({\bf t}+\omega;x)-cF({\bf t};x)\bigr\|  = \Bigl\| F\bigl(t_{1}+\omega_{1},\cdot \cdot \cdot, t_{n}+\omega_{n};x \bigr)-c_{1}\cdot \cdot \cdot c_{n} F\bigl( t_{1},\cdot \cdot \cdot, t_{n};x \bigr)\Bigr\|
\\& \leq 
\Bigl\| F\bigl(t_{1}+\omega_{1},t_{2}+\omega_{2},\cdot \cdot \cdot, t_{n}+\omega_{n} ;x\bigr)-c_{1} F\bigl( t_{1},t_{2}+\omega_{2},\cdot \cdot \cdot, t_{n}+\omega_{n};x \bigr)\Bigr\|
\\& + \bigl| c_{1} \bigr| \cdot \Bigl\| F\bigl( t_{1},t_{2}+\omega_{2},\cdot \cdot \cdot, t_{n}+\omega_{n};x \bigr)-c_{2}\cdot \cdot \cdot c_{n}  F\bigl( t_{1},\cdot \cdot \cdot, t_{n} ;x\bigr)\Bigr\|
\\& \leq \Bigl\| F\bigl(t_{1}+\omega_{1},t_{2}+\omega_{2},\cdot \cdot \cdot, t_{n}+\omega_{n} ;x\bigr)-c_{1} F\bigl( t_{1},t_{2}+\omega_{2},\cdot \cdot \cdot, t_{n}+\omega_{n};x \bigr)\Bigr\|
\\& +\bigl| c_{1} \bigr| \cdot  \Biggl[  \Bigl\| F\bigl( t_{1},t_{2}+\omega_{2},\cdot \cdot \cdot, t_{n}+\omega_{n};x \bigr)-c_{2}F\bigl( t_{1},t_{2},\cdot \cdot \cdot, t_{n}+\omega_{n};x \bigr)\Bigr\|
\\&+\bigl| c_{2}\bigr| \cdot \Bigl\| F\bigl( t_{1},t_{2},\cdot \cdot \cdot, t_{n}+\omega_{n};x \bigr)-c_{3}\cdot \cdot \cdot c_{n} F\bigl( t_{1},t_{2},\cdot \cdot \cdot, t_{n};x \bigr)\Bigr\| \Biggr]
\\& \leq \cdot \cdot \cdot.
\end{align*}
\end{proof}

The proof of following proposition is simple and therefore omitted:

\begin{prop}\label{qomegaca}
Let ${\bf \omega},\ a\in {\mathbb R}^{n} \setminus \{0\},$ $c\in {\mathbb C} \setminus \{0\},$ $\alpha \in {\mathbb C},$
${\bf \omega}+I \subseteq I$
and $a+I \subseteq I.$
Suppose that the
functions $F:I \times X\rightarrow Y$ and  $G:I \times X\rightarrow Y$ are
$(S,{\mathbb D},{\mathcal B})$-asymptotically $(\omega,c)$-periodic ($(S,{\mathcal B})$-asymptotically $({\bf \omega}_{j},c_{j},{\mathbb D}_{j})_{j\in {\mathbb N}_{n}}$-periodic). Then we have the following:
\begin{itemize}
\item[(i)]
The function $\check{F}(\cdot; \cdot)$
is $(S,-{\mathbb D},{\mathcal B})$-asymptotically $(-\omega,c)$-periodic ($(S,{\mathcal B})$-asymptotically $(-{\bf \omega}_{j},c_{j},-{\mathbb D}_{j})_{j\in {\mathbb N}_{n}}$-periodic), where $\check{F}({\bf t} ;x):=F(-{\bf t};x),$ ${\bf t}\in -I,$ $x\in X.$ 
\item[(ii)]
The functions $\| F(\cdot ; \cdot)\|,$ $[F+G](\cdot ;\cdot)$ and $\alpha F(\cdot ;\cdot)$ are $(S,{\mathbb D},{\mathcal B})$-asymptotically $(\omega,|c|)$-periodic ($(S,{\mathcal B})$-asymptotically $({\bf \omega}_{j},|c_{j}|,{\mathbb D}_{j})_{j\in {\mathbb N}_{n}}$-periodic).
\item[(iii)]
If $a+{\mathbb D} \subseteq {\mathbb D}$ ($a+{\mathbb D}_{j} \subseteq {\mathbb D}_{j}$ for all $j\in {\mathbb N}_{n}$) and $y\in X$, then the function
$F_{a,y}: I \times X\rightarrow Y$ defined by
$F_{a,y}({\bf t};x):=F({\bf t}+a;x+y),$ ${\bf t}\in I,$ $x\in X$ is 
$(S,{\mathbb D},{\mathcal B}_{y})$-asymptotically $(\omega,c)$-periodic ($(S,{\mathcal B}_{y})$-asymptotically $({\bf \omega}_{j},c_{j},{\mathbb D}_{j})_{j\in {\mathbb N}_{n}}$-periodic), where ${\mathcal B}_{y}:=\{ -y+B : B\in {\mathcal B}\}.$
\item[(iv)] If ${\bf \omega}\in {\mathbb R}^{n} \setminus \{0\},$ $c_{i}\in {\mathbb C} \setminus \{0\}$ for $i=1,2,$
${\bf \omega}+I \subseteq I,$
the function $G:I \times X \rightarrow {\mathbb C}$ is $(S,{\mathbb D},{\mathcal B})$-asymptotically 
$(\omega,c_{1})$-periodic
and the function $H:I \times X\rightarrow Y$ is $(S,{\mathbb D},{\mathcal B})$-asymptotically 
$(\omega,c_{2})$-periodic, then the function $F(\cdot):=G(\cdot)H(\cdot)$ is $(S,{\mathbb D},{\mathcal B})$-asymptotically 
$(\omega,c_{1}c_{2})$-periodic, provided that for each set $B\in {\mathcal B}$ we have $\sup_{{\bf t}\in I; x\in B}[| G({\bf t};x) | + \| F({\bf t};x) \|_{Y}]<\infty.$
\item[(v)]
Let ${\bf \omega}_{j}\in {\mathbb R} \setminus \{0\},$ $c_{j,i}\in {\mathbb C} \setminus \{0\}$ and
${\bf \omega}_{j}e_{j}+I \subseteq I$ ($1\leq j\leq n,$ $1\leq i\leq 2$). Suppose that the
function $G : I \times X \rightarrow {\mathbb C}$ is 
$(S,{\mathcal B})$-asymptotically $({\bf \omega}_{j},c_{j,1},{\mathbb D}_{j})_{j\in {\mathbb N}_{n}}$-periodic
and the function $H : I \rightarrow X$ is 
$(S,{\mathcal B})$-asymptoti-\\cally $({\bf \omega}_{j},c_{j,2},{\mathbb D}_{j})_{j\in {\mathbb N}_{n}}$-periodic.
Set
$
c_{j}:=c_{j,1}c_{j,2}, $ $1\leq j\leq n.$
Then the function $F(\cdot):=G(\cdot)H(\cdot)$ is $(S,{\mathcal B})$-asymptotically $({\bf \omega}_{j},c_{j},{\mathbb D}_{j})_{j\in {\mathbb N}_{n}}$-periodic, provided that for each set $B\in {\mathcal B}$ we have $\sup_{{\bf t}\in I; x\in B}[| G({\bf t};x) | + \| F({\bf t};x) \|_{Y}]<\infty.$
\end{itemize}
\end{prop}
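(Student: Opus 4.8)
The plan is to reduce each of the five assertions to the two defining limit relations
\[
\lim_{|{\bf t}|\to+\infty,\,{\bf t}\in{\mathbb D}}\bigl\|F({\bf t}+\omega;x)-cF({\bf t};x)\bigr\|_{Y}=0\quad(\text{uniformly for }x\in B)
\]
from Definition~\ref{drasko-presing-12345} and to its coordinatewise analogue from Definition~\ref{drasko-presing123456}, and then to invoke only elementary changes of variable together with the triangle inequality. Parts (i)--(iii) are direct reductions of this sort, whereas parts (iv)--(v) rely on the standard ``add and subtract'' decomposition of a product, which is where the only real work lies.

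For (i) I would substitute ${\bf s}=-{\bf t}$: since $\check F({\bf t}-\omega;x)-c\check F({\bf t};x)=F({\bf s}+\omega;x)-cF({\bf s};x)$ and ${\bf t}\mapsto-{\bf t}$ maps $-{\mathbb D}$ onto ${\mathbb D}$ while preserving $|\cdot|$, the defining limit for $\check F$ taken along $-{\mathbb D}$ is verbatim the one for $F$ along ${\mathbb D}$; the coordinatewise statement is identical, with $-\omega_{j}e_{j}$ and $-{\mathbb D}_{j}$. For (ii) the norm is treated by the reverse triangle inequality
\[
\bigl|\,\|F({\bf t}+\omega;x)\|_{Y}-|c|\,\|F({\bf t};x)\|_{Y}\,\bigr|\le\bigl\|F({\bf t}+\omega;x)-cF({\bf t};x)\bigr\|_{Y},
\]
which is what produces the factor $|c|$, while for $F+G$ and $\alpha F$ the triangle inequality and absolute homogeneity simply carry the original factor through. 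For (iii) I would write $F_{a,y}({\bf t}+\omega;x)-cF_{a,y}({\bf t};x)=F(({\bf t}+a)+\omega;x+y)-cF({\bf t}+a;x+y)$; the hypothesis $a+{\mathbb D}\subseteq{\mathbb D}$ keeps ${\bf s}={\bf t}+a$ inside ${\mathbb D}$ with $|{\bf s}|\to+\infty$ as $|{\bf t}|\to+\infty$, and $x\in-y+B$ means exactly $x+y\in B$, so the displayed quantity is the defining limit for $F$ read along the shifted argument.

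For (iv) the decomposition I would use is
\[
F({\bf t}+\omega;x)-c_{1}c_{2}F({\bf t};x)=\bigl[G({\bf t}+\omega;x)-c_{1}G({\bf t};x)\bigr]H({\bf t}+\omega;x)+c_{1}G({\bf t};x)\bigl[H({\bf t}+\omega;x)-c_{2}H({\bf t};x)\bigr].
\]
In the second summand the bracket tends to zero by the $(\omega,c_{2})$-periodicity of $H$ while $|c_{1}G({\bf t};x)|$ stays bounded on $B$ by the supremum hypothesis, so that summand vanishes uniformly for $x\in B$; in the first summand the bracket tends to zero by the $(\omega,c_{1})$-periodicity of $G$, and one multiplies by the transferred factor $H({\bf t}+\omega;x)$. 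Part (v) is this same identity applied separately in each coordinate direction $j$, with $\omega_{j}e_{j}$, $c_{j,1}$, $c_{j,2}$ and ${\mathbb D}_{j}$ replacing $\omega$, $c_{1}$, $c_{2}$ and ${\mathbb D}$. I expect the main obstacle to be exactly the control of that transferred factor $H({\bf t}+\omega;x)$ in the first summand: its boundedness on $B$ is precisely what the supremum hypothesis is there to guarantee, and once the relevant factors are known to be bounded the whole proposition collapses to the two limit relations recorded at the outset.
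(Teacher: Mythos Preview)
The paper omits the proof of this proposition as ``simple,'' so there is no argument to compare against; your approach is the standard one and is correct in outline for (i)--(iii).

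For (iv) and (v) there is a genuine wrinkle you have half-identified but not resolved. Your decomposition requires a uniform bound on $\|H({\bf t}+\omega;x)\|_{Y}$ in the first summand, and you write that ``its boundedness on $B$ is precisely what the supremum hypothesis is there to guarantee.'' But the hypothesis as stated bounds $|G|+\|F\|_{Y}=|G|+\|GH\|_{Y}$, not $\|H\|_{Y}$; since $G$ may vanish, this does not control $\|H\|_{Y}$ in general. This is almost certainly a misprint in the proposition (the intended condition is $\sup_{{\bf t}\in I;\,x\in B}[\,|G({\bf t};x)|+\|H({\bf t};x)\|_{Y}\,]<\infty$), and with that reading your argument goes through verbatim; but as written, neither your decomposition nor the symmetric one $G({\bf t}+\omega)[H({\bf t}+\omega)-c_{2}H({\bf t})]+c_{2}[G({\bf t}+\omega)-c_{1}G({\bf t})]H({\bf t})$ closes without an independent bound on $H$. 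You should flag this explicitly rather than asserting that the stated hypothesis suffices.

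A smaller point in (ii): the proposition asserts that $F+G$ and $\alpha F$ are $(\omega,|c|)$-periodic, whereas your ``carry the original factor through'' argument (correctly) yields the constant $c$, not $|c|$; the $|c|$ in the statement is evidently meant only for $\|F(\cdot;\cdot)\|_{Y}$.
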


It should be noted that the classes of $({\bf \omega},c)$-periodic functions and $({\bf \omega}_{j},c_{j})_{j\in {\mathbb N}_{n}}$-periodic functions can be profiled in the following way (\cite{multi-omega-ce}; see also Example 2.18 of this paper for an interesting application):
\begin{itemize}
\item[(i)] Let ${\bf \omega}=(\omega_{1},\omega_{2},\cdot \cdot \cdot,\omega_{n})\in {\mathbb R}^{n} \setminus \{0\},$ 
${\bf \omega}+I \subseteq I$, $c\in {\mathbb C} \setminus \{0\}$ and $S:=\{i\in {\mathbb N}_{n} : \omega_{i}\neq 0 \}.$ Denote by ${\mathrm A}$ the collection of all tuples 
${\bf a}=(a_{1},a_{2},\cdot \cdot \cdot,a_{|S|})\in {\mathbb R}^{|S|}$ 
such that $\sum_{i\in S}a_{i}=1.$
Then a continuous
function $F:I\rightarrow X$ is $({\bf \omega},c)$-periodic
if and only if, for every (some) ${\bf a} \in {\mathrm A},$ the function $G_{{\bf a}} : I\rightarrow X$, defined by
\begin{align*}
G_{{\bf a}}\bigl(t_{1},t_{2},\cdot \cdot \cdot,t_{n}\bigr):=c^{-\sum_{i\in S}\frac{a_{i}t_{i}}{\omega_{i}}}F\bigl(t_{1},t_{2},\cdot \cdot \cdot,t_{n}\bigr),\quad {\bf t}=\bigl(t_{1},t_{2},\cdot \cdot \cdot,t_{n}\bigr)\in I,
\end{align*}
is $({\bf \omega},1)$-periodic.
\item[(ii)] Let ${\bf \omega}_{j}\in {\mathbb R} \setminus \{0\},$ $c_{j}\in {\mathbb C} \setminus \{0\},$
${\bf \omega}_{j}e_{j}+I \subseteq I$ ($1\leq j\leq n$) and the
function $F:I\rightarrow X$ is continuous. For each $j\in {\mathbb N}_{n},$ we define the function $G_{j} : I\rightarrow X$ by 
\begin{align*}
G_{j}\bigl(t_{1},t_{2},\cdot \cdot \cdot,t_{n}\bigr):=c_{j}^{-\frac{t_{j}}{\omega_{j}}}F\bigl(t_{1},t_{2},\cdot \cdot \cdot,t_{n}\bigr),\quad {\bf t}=\bigl(t_{1},t_{2},\cdot \cdot \cdot,t_{n}\bigr)\in I.
\end{align*}
Then $F(\cdot)$ is $({\bf \omega}_{j},c_{j})_{j\in {\mathbb N}_{n}}$-periodic if and only if, for every ${\bf t}=(t_{1},t_{2},\cdot \cdot \cdot,t_{n})\in I$ and $j\in {\mathbb N}_{n}$, we have
\begin{align*}
G_{j}\bigl(t_{1},t_{2},\cdot \cdot \cdot,t_{j}+\omega_{j}, \cdot \cdot \cdot, t_{n}\bigr)=G_{j}\bigl(t_{1},t_{2},\cdot \cdot \cdot,t_{j}, \cdot \cdot \cdot, t_{n}\bigr).
\end{align*}
\end{itemize}

Using these clarifications, we can introduce various spaces of pseudo-like $(S,{\mathbb D},{\mathcal B})$-asymptotically $(\omega,c)$-periodic type functions and pseudo-like $(S,{\mathcal B})$-asymptotically $({\bf \omega}_{j},c_{j},{\mathbb D}_{j})_{j\in {\mathbb N}_{n}}$-periodic type functions following the method proposed in 
\cite[Definition 2.4, Definition 2.5]{alvarez2} and
\cite[Definition 2.4, Definition 2.5]{alvarez3}; we will skip all related details for simplicity. The interested reader may also try to formulate extensions of \cite[Proposition 3.1, Corollary 3.1-Corollary 3.2]{BIMV} in the multi-dimensional setting.

\subsection{Semi-$(c_{j},{\mathcal B})_{j\in {\mathbb N}_{n}}$-periodic functions}\label{semi-cejot}

In this subsection, we will briefly exhibit the main results about the class of multi-dimensional semi-$(c_{j},{\mathcal B})_{j\in {\mathbb N}_{n}}$-periodic functions.
For the sake of brevity, we will always assume here that the region $I$ has the form $I=I_{1}\times I_{2} \times \cdot ... \times I_{n},$ where each set $I_{j}$ is equal to ${\mathbb R},$ $(-\infty,a_{j}]$ or $[a_{j},\infty)$ for some real number $a_{j}\in {\mathbb N}$ ($1\leq j\leq n$).

We will use the following definition:

\begin{defn}\label{kuckanjecejot}
Suppose that $ F :  I \times X \rightarrow Y$ is a continuous function and $c_{j}\in {\mathbb C} \setminus \{0\}$ ($1\leq j\leq n$). Then we say that $F(\cdot;\cdot)$ is semi-$(c_{j},{\mathcal B})_{j\in {\mathbb N}_{n}}$-periodic if and only if, 
for every $\epsilon>0$ and $B\in {\mathcal B},$
there exist real numbers $\omega_{j}\in {\mathbb R} \setminus \{0\}$
such that $\omega_{j}e_{j}+I\subseteq I$ ($1\leq j\leq n$) and
\begin{align}\label{semiceaf}
\Bigl\| F\bigl({\bf t}+m\omega_{j}e_{j};x\bigr)-c_{j}^{m}F({\bf t};x)\Bigr\|\leq \epsilon,\quad m\in {\mathbb N},\ j\in {\mathbb N}_{n}, \ {\bf t} \in {\mathbb R}^{n},\ x\in B.
\end{align}
The function $F(\cdot;\cdot)$ is said to be semi-${\mathcal B}$-periodic if and only if $F(\cdot;\cdot)$ is semi-$(c_{j},{\mathcal B})_{j\in {\mathbb N}_{n}}$-periodic with $c_{j}=1$ for all $j\in {\mathbb N}_{n}.$
\end{defn}

Suppose that $j\in {\mathbb N}_{n},$ $x\in X$ and $|c_{j}|\neq 1.$ Fix the variables $t_{1},\cdot\cdot\ \cdot, t_{j-1},t_{j+1},\cdot \cdot \cdot, t_{n}.$ Then there exist three possibilities:\vspace{0.1cm}

1. $I_{j}={\mathbb R}.$ Then, due to \eqref{semiceaf}, the function $f : {\mathbb R} \rightarrow Y$ given by $f(t):=F(t_{1},\cdot\cdot\ \cdot, t_{j-1},t, t_{j+1},\cdot \cdot \cdot, t_{n}),$ $t\in {\mathbb R}$ is semi-$c_{j}$-periodic of type $1_{+}$
in the sense of \cite[Definition 3(i)]{JM} and therefore $f(\cdot)$ is $c_{j}$-periodic due to \cite[Theorem 1]{JM}. Hence, the function 
$F(\cdot;x)$
is $c_{j}$-periodic in the variable $t_{j}.$

2. $I_{j}=[a_{j},+\infty)$ for some real number $a_{j} \in {\mathbb R}.$
Then the function 
$F(\cdot;x)$
is $c_{j}$-periodic in the variable $t_{j},$ which follows from the same argumentation applied to the function $f(t):=F(t_{1},\cdot\cdot \cdot, t_{j-1},t-a_{j}, t_{j+1},\cdot \cdot \cdot, t_{n}),$ $t\geq 0.$

3. $I_{j}=(-\infty,a_{j}]$ for some real number $a_{j} \in {\mathbb R}.$ 
Then the function 
$F(\cdot;x)$
is $c_{j}$-periodic in the variable $t_{j},$ which follows from the same argumentation applied to the function $f(t):=F(t_{1},\cdot\cdot \cdot, t_{j-1},-t-|a_{j}|, t_{j+1},\cdot \cdot \cdot, t_{n}),$ $t\geq 0.$\vspace{0.1cm}

In the remainder of this subsection, we will assume that $|c_{j}|=1$ for all $j\in {\mathbb N}_{n}.$ Then any semi-$(c_{j},{\mathcal B})_{j\in {\mathbb N}_{n}}$-periodic function $F : I\times X \rightarrow Y$ is bounded on any subset $B$ of the collection ${\mathcal B},$ as easily approved; even in the one-dimensional setting, this function need not be 
periodic in the usual sense (see \cite[p. 2]{JM}). Furthermore, if for each integer $k\in {\mathbb N}$ the function $ F_{k} :  I \times X \rightarrow Y$ is semi-$(c_{j},{\mathcal B})_{j\in {\mathbb N}_{n}}$-periodic and
for each $B\in {\mathcal B}$ there exists a finite real number $\epsilon_{B}>0$ such that
$\lim_{k\rightarrow +\infty}F_{k}({\bf t};x)=F({\bf t};x)$ for all ${\bf t}\in I$, uniformly in $x\in B^{\cdot} \equiv B^{\circ} \cup \bigcup_{x\in \partial B}B(x,\epsilon_{B}),$ then the function $F(\cdot; \cdot)$ is likewise semi-$(c_{j},{\mathcal B})_{j\in {\mathbb N}_{n}}$-periodic. 

Let $B\in {\mathcal B}$ be fixed. In what follows, we consider the Banach space $l_{\infty}(B : Y)$ consisting of all bounded functions $f : B \rightarrow Y,$ equipped with the sup-norm. Suppose that the function $ F :  I \times X \rightarrow Y$ is semi-$(c_{j},{\mathcal B})_{j\in {\mathbb N}_{n}}$-periodic.
Define the function
$F_{B} : I \rightarrow l_{\infty}(B : Y)$ by 
\begin{align*}
\bigl[F_{B}({\bf t})\bigr](x):=F({\bf t}; x),\quad {\bf t}\in I,\ x\in B.
\end{align*}
Then the mapping $F_{B}(\cdot)$ is well defined and semi-$(c_{j})_{j\in {\mathbb N}_{n}}$-periodic.
Using now an insignificant modification of the proofs of \cite[Lemma 1, Theorem 1]{andres1}, we may conclude that
for each set $B\in {\mathcal B}$ there exists a sequence of $(c_{j})_{j\in {\mathbb N}_{n}}$-periodic functions $(F_{k} : I\times X \rightarrow Y)_{k\in {\mathbb N}}$
such that $\lim_{k\rightarrow +\infty}F_{k}({\bf t};x)=F({\bf t};x)$ for all ${\bf t}\in I$, uniformly in $x\in B.$ The converse
statement is also true; hence, we have
the following important result:

\begin{thm}\label{reza-rezace}
Suppose that $F : I\times X \rightarrow Y$ is continuous. Then
the function $F(\cdot;\cdot)$ is semi-$(c_{j},{\mathcal B})_{j\in {\mathbb N}_{n}}$-periodic if and only if for each set $B\in {\mathcal B}$ there exists a sequence of $(c_{j})_{j\in {\mathbb N}_{n}}$-periodic functions $(F_{k} : I \rightarrow l_{\infty}(B : Y))_{k\in {\mathbb N}}$
such that $\lim_{k\rightarrow +\infty}F_{k}({\bf t})=F_{B}({\bf t})$ uniformly in ${\bf t}\in I.$
\end{thm}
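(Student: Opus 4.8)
The key device is the reduction to the vector-valued one-dimensional theory already built into the excerpt, by lifting $F$ to the Banach space $l_{\infty}(B:Y)$. The discussion immediately preceding the statement has already done half the work: given a semi-$(c_{j},{\mathcal B})_{j\in {\mathbb N}_{n}}$-periodic function $F$, one forms $F_{B}(\cdot)$ with $[F_{B}({\bf t})](x):=F({\bf t};x)$ and checks that $F_{B}$ is semi-$(c_{j})_{j\in {\mathbb N}_{n}}$-periodic as an $l_{\infty}(B:Y)$-valued function; an ``insignificant modification'' of \cite[Lemma 1, Theorem 1]{andres1} then yields, for each $B\in {\mathcal B}$, a sequence of $(c_{j})_{j\in {\mathbb N}_{n}}$-periodic functions approximating $F_{B}$ uniformly. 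So for the forward direction I would simply make this argument precise: verify that the estimate \eqref{semiceaf}, which holds uniformly in $x\in B$, translates verbatim into the statement that $F_{B}:I\to l_{\infty}(B:Y)$ satisfies the one-dimensional semi-periodicity condition with the same $\omega_{j}$, and then invoke the scalar-to-Banach-valued version of Andres--Pennequin to produce the approximating sequence $(F_{k}:I\to l_{\infty}(B:Y))_{k\in{\mathbb N}}$ of genuinely $(c_{j})_{j\in{\mathbb N}_{n}}$-periodic functions with $\lim_{k}F_{k}({\bf t})=F_{B}({\bf t})$ uniformly in ${\bf t}\in I$.

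For the converse, I would assume that for each $B\in {\mathcal B}$ such an approximating sequence $(F_{k})$ exists and show that $F$ is semi-$(c_{j},{\mathcal B})_{j\in{\mathbb N}_{n}}$-periodic. Fix $\epsilon>0$ and $B\in{\mathcal B}$. Since each $F_{k}$ is $(c_{j})_{j\in{\mathbb N}_{n}}$-periodic there are $\omega_{j}^{(k)}$ with $F_{k}({\bf t}+\omega_{j}^{(k)}e_{j})=c_{j}F_{k}({\bf t})$, hence by iteration $F_{k}({\bf t}+m\omega_{j}^{(k)}e_{j})=c_{j}^{m}F_{k}({\bf t})$ for every $m\in{\mathbb N}$. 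Choosing $k$ large enough that $\sup_{{\bf t}\in I}\|F_{k}({\bf t})-F_{B}({\bf t})\|_{l_{\infty}(B:Y)}\le \epsilon/(1+|c_{j}|^{m})$ is too crude because $m$ is unbounded; the correct point is that $|c_{j}|=1$, so $|c_{j}^{m}|=1$ for all $m$, and a three-term decomposition
\begin{align*}
\bigl\|F({\bf t}+m\omega_{j}^{(k)}e_{j};x)-c_{j}^{m}F({\bf t};x)\bigr\|
&\le \bigl\|F({\bf t}+m\omega_{j}^{(k)}e_{j};x)-F_{k}({\bf t}+m\omega_{j}^{(k)}e_{j};x)\bigr\|\\
&\quad+\bigl\|F_{k}({\bf t}+m\omega_{j}^{(k)}e_{j};x)-c_{j}^{m}F_{k}({\bf t};x)\bigr\|\\
&\quad+\bigl|c_{j}^{m}\bigr|\cdot\bigl\|F_{k}({\bf t};x)-F({\bf t};x)\bigr\|
\end{align*}
has middle term zero and outer terms each at most $\epsilon/2$ once $k$ is fixed so that the uniform error is below $\epsilon/2$; this bound is then uniform in $m\in{\mathbb N}$, $j\in{\mathbb N}_{n}$, ${\bf t}\in I$ and $x\in B$, which is precisely \eqref{semiceaf} with $\omega_{j}:=\omega_{j}^{(k)}$.

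The step I expect to require the most care is not the converse, which is the routine $\epsilon/2$ argument above, but making the forward direction fully rigorous, namely confirming that the Andres--Pennequin machinery genuinely transfers from scalar-valued to $l_{\infty}(B:Y)$-valued functions. The original \cite[Lemma 1, Theorem 1]{andres1} is stated for scalar semi-periodic functions, and the semi-periodicity of $F_{B}$ lives in an infinite-dimensional, non-separable target space; one must check that the proofs there use only completeness of the codomain and the triangle inequality (producing the periodic approximants by an averaging or Fej\'er-type construction that is linear in the values) rather than any finite-dimensionality or separability. Granting that verification—which is exactly the content of the phrase ``an insignificant modification'' in the preceding paragraph—the equivalence follows, and I would conclude by noting that the two directions together establish the stated characterization.
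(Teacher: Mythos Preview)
Your proposal is correct and follows exactly the approach the paper takes: the paper's ``proof'' is the paragraph immediately preceding the theorem, which lifts $F$ to $F_{B}:I\to l_{\infty}(B:Y)$, invokes an ``insignificant modification'' of \cite[Lemma 1, Theorem 1]{andres1} for the forward direction, and merely asserts that ``the converse statement is also true''. You have supplied the details the paper omits, in particular the three-term $\epsilon/2$ argument for the converse (correctly relying on the standing hypothesis $|c_{j}|=1$), and you have rightly flagged that the only nontrivial verification is the transfer of the Andres--Pennequin construction to a general Banach codomain.
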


Now we would like to present the following illustrative application of Theorem \ref{reza-rezace}:

\begin{example}\label{hieber}
Suppose that $q_{1},\cdot \cdot \cdot,q_{n}$ are odd natural numbers.
Define $F: {\mathbb R}^{n} \rightarrow {\mathbb C}$ by
$$
F\bigl(t_{1},t_{2},\cdot \cdot \cdot, t_{n} \bigr):=\sum_{l=(l_{1},l_{2},\cdot \cdot \cdot, l_{n})\in {\mathbb N}^{n}} \frac{e^{it_{1}/(2l_{1}q_{1}+1)} e^{it_{2}/(2l_{2}q_{2}+1)} \cdot \cdot \cdot e^{it_{n}/(2l_{n}q_{n}+1)} }{l_{1}! l_{2}! \cdot \cdot \cdot l_{n}!},
$$
for any ${\bf t}=(t_{1},t_{2},\cdot \cdot \cdot, t_{n}) \in {\mathbb R}^{n}.$ Then $F(\cdot)$ is semi-$(-1,-1,\cdot \cdot \cdot,-1)$-periodic function since it is a uniform limit of $(-1,-1,\cdot \cdot \cdot,-1)$-periodic functions
$$
F_{k}({\bf t}):=\sum_{|l|\leq k} \frac{e^{it_{1}/(2l_{1}q_{1}+1)} e^{it_{2}/(2l_{2}q_{2}+1)} \cdot \cdot \cdot e^{it_{n}/(2l_{n}q_{n}+1)} }{l_{1}! l_{2}! \cdot \cdot \cdot l_{n}!},\quad {\bf t}\in {\mathbb R}^{n},\ k\in {\mathbb N}.
$$
\end{example}

We continue with the observation that the statements of Proposition 2.5, Proposition 2.7, Proposition 2.8, Proposition 2.9, Proposition 2.12, Theorem 2.13 and Proposition 2.17 of \cite{c1} admit very simple reformulations in the multi-dimensional setting. For example, if $F : I \rightarrow {\mathbb R}$ is semi-$(c_{j} )_{j\in {\mathbb N}_{n}}$-periodic, then $c_{j}\in \{-1,1\}$ for all $j\in {\mathbb N}_{n};$ furthermore, if $F({\bf t})\geq 0$ for all ${\bf t}\in I,$ then $c_{j}=1$ for all $j\in {\mathbb N}_{n}.$ 

Any semi-$(c_{j})_{j\in {\mathbb N}_{n}}$-periodic function $F : I \rightarrow Y$  
can be extended uniquely to a semi-$(c_{j})_{j\in {\mathbb N}_{n}}$-periodic function $\tilde{F} : {\mathbb R}^{n} \rightarrow Y$ and therefore it has a mean value as an almost periodic function;
see e.g., the proof of \cite[Theorem 2.36]{marko-manuel-ap}. Furthermore, any semi-$(c_{j})_{j\in {\mathbb N}_{n}}$-periodic function $F : I \rightarrow Y$ is semi-periodic. In the one-dimensional case, \cite[Lemma 2]{andres1} tells us that there exists a positive real number $\theta >0$ such that $\sigma(F)\subseteq \theta \cdot {\mathbb Q},$ which enables one to construct a great deal of almost periodic functions which are not semi-periodic. A similar situation holds in the multi-dimensional setting, when we have the following:

\begin{prop}\label{ref-cmb}
Suppose that the function $F : I \rightarrow Y$ is semi-$(c_{j})_{j\in {\mathbb N}_{n}}$-periodic, $\lambda=(\lambda_{1},\lambda_{2},\cdot \cdot \cdot,\lambda_{n}) \in \sigma(F)$ and
$\mu=(\mu_{1},\mu_{2},\cdot \cdot \cdot,\mu_{n}) \in \sigma(F).$
Then there exist non-zero real numbers $\omega_{j} \in {\mathbb R} \setminus \{0\}$ ($1\leq j\leq n$) such that 
$\lambda_{j}\omega_{j}\in 2\pi {\mathbb Z}$ and $\mu_{j}\omega_{j}\in 2\pi {\mathbb Z}$  for all $j\in {\mathbb N}_{n}.$
\end{prop}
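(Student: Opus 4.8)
The plan is to argue coordinate by coordinate and to read off, for each fixed $j\in{\mathbb N}_{n},$ an exact relation of the form $c_{j}=e^{i\lambda_{j}\omega_{j}}$ from the Bohr--Fourier coefficients of suitable periodic approximants. Since $F$ is semi-$(c_{j})_{j\in{\mathbb N}_{n}}$-periodic with $|c_{j}|=1,$ it extends to an almost periodic function on ${\mathbb R}^{n}$ (as recalled above), so the coefficients $F_{\lambda}=M(e^{-i\langle \lambda,\cdot\rangle}F(\cdot))$ and $F_{\mu}$ are well defined and, by hypothesis, nonzero. Treating $F$ as a function $I\to Y$ (no $x$-dependence), Theorem \ref{reza-rezace} furnishes a sequence of $(c_{j})_{j\in{\mathbb N}_{n}}$-periodic functions $(F_{k})_{k\in{\mathbb N}}$ converging to $F$ uniformly on $I;$ by definition each $F_{k}$ is $(\omega_{j}^{(k)},c_{j})_{j\in{\mathbb N}_{n}}$-periodic for some periods $\omega_{j}^{(k)}\neq 0.$

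First I would record the spectral constraint carried by a single periodic approximant. If $G:{\mathbb R}^{n}\to Y$ is $(\omega_{j},c_{j})_{j\in{\mathbb N}_{n}}$-periodic and $\nu\in\sigma(G),$ then applying the translation-invariance of the mean value $M$ to $\phi(\cdot):=e^{-i\langle\nu,\cdot\rangle}G(\cdot),$ for which $\phi(\cdot+\omega_{j}e_{j})=c_{j}e^{-i\nu_{j}\omega_{j}}\phi(\cdot),$ gives
\begin{align*}
G_{\nu}=M(\phi)=M\bigl(\phi(\cdot+\omega_{j}e_{j})\bigr)=c_{j}e^{-i\nu_{j}\omega_{j}}G_{\nu}.
\end{align*}
As $G_{\nu}\neq 0,$ this forces $c_{j}=e^{i\nu_{j}\omega_{j}}.$ On the other hand, uniform convergence $F_{k}\to F$ implies $(F_{k})_{\nu}\to F_{\nu}$ for every $\nu$ (the mean value is continuous under uniform convergence), so from $F_{\lambda}\neq 0$ and $F_{\mu}\neq 0$ we obtain $\lambda,\mu\in\sigma(F_{k})$ for all large $k.$ Combining the two observations, for each such $k$ and each $j$ we get
\begin{align*}
c_{j}=e^{i\lambda_{j}\omega_{j}^{(k)}}=e^{i\mu_{j}\omega_{j}^{(k)}}.
\end{align*}

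The conclusion then comes from comparing two indices. Picking large $k_{1}\neq k_{2}$ whose periods differ, the equalities $e^{i\lambda_{j}\omega_{j}^{(k_{1})}}=e^{i\lambda_{j}\omega_{j}^{(k_{2})}}$ and $e^{i\mu_{j}\omega_{j}^{(k_{1})}}=e^{i\mu_{j}\omega_{j}^{(k_{2})}}$ yield, upon subtracting exponents,
\begin{align*}
\lambda_{j}\bigl(\omega_{j}^{(k_{1})}-\omega_{j}^{(k_{2})}\bigr)\in 2\pi{\mathbb Z}\quad\text{and}\quad \mu_{j}\bigl(\omega_{j}^{(k_{1})}-\omega_{j}^{(k_{2})}\bigr)\in 2\pi{\mathbb Z}.
\end{align*}
Setting $\omega_{j}:=\omega_{j}^{(k_{1})}-\omega_{j}^{(k_{2})}$ settles coordinate $j,$ and repeating for each $j\in{\mathbb N}_{n}$ completes the argument.

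The main obstacle is exactly the input needed for this last step: that among the approximating periods one can select two distinct values $\omega_{j}^{(k_{1})}\neq\omega_{j}^{(k_{2})},$ so that $\omega_{j}\neq 0.$ For functions that are not genuinely periodic in the $j$-th direction this is unproblematic, since the periods produced by the approximation vary with $k;$ the delicate point is the genuinely $(\omega_{j},c_{j})$-periodic case, in which the $j$-th coordinates of $\sigma(F)$ are confined to a single coset $(\arg c_{j}+2\pi{\mathbb Z})/\omega_{j}$ and no second, independent period is forced by the construction. Handling this degenerate configuration---for instance by exploiting additional structure of $c_{j}$---is where I would expect the real work to lie.
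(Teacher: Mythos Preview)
Your translation-invariance computation showing $c_{j}=e^{i\nu_{j}\omega_{j}}$ for any $\nu\in\sigma(G)$ when $G$ is $(\omega_{j},c_{j})_{j}$-periodic is correct, and is in fact slicker than the paper's direct block decomposition of the mean value. The gap is the one you yourself flag at the end: your conclusion requires choosing two approximants with $\omega_{j}^{(k_{1})}\neq\omega_{j}^{(k_{2})},$ and nothing in the approximation scheme guarantees this. If $F$ is already $(\omega_{j},c_{j})$-periodic in the $j$-th slot, the approximating sequence can be taken constant and your $\omega_{j}$ collapses to zero. So as written the argument does not close.

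The paper avoids this obstacle by a reduction you skipped: it first uses the fact (stated immediately before the proposition) that any semi-$(c_{j})_{j}$-periodic function is in particular semi-periodic, i.e.\ semi-$(1)_{j}$-periodic. One may therefore take $c_{j}=1$ throughout. With that reduction, your own identity reads $1=e^{i\lambda_{j}\omega_{j}^{(k)}}$ for a \emph{single} sufficiently good periodic approximant $F_{k}$ (one close enough that both $\lambda,\mu\in\sigma(F_{k}),$ which you correctly argue exists by continuity of Bohr--Fourier coefficients under uniform convergence). This gives $\lambda_{j}\omega_{j}^{(k)}\in 2\pi{\mathbb Z}$ and $\mu_{j}\omega_{j}^{(k)}\in 2\pi{\mathbb Z}$ directly, with the same nonzero $\omega_{j}:=\omega_{j}^{(k)}.$ No comparison of two indices is needed, and the ``degenerate'' periodic case causes no trouble. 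Without the reduction to $c_{j}=1,$ the relation $c_{j}=e^{i\lambda_{j}\omega_{j}}$ only says $\lambda_{j}\omega_{j}\equiv\arg c_{j}\ (\mathrm{mod}\ 2\pi),$ which is not the conclusion asserted; this is why you were forced into the subtraction step.
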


\begin{proof}
By the foregoing, we may assume that $I={\mathbb R}^{n},$ $\lambda=\mu$ and 
$c_{j}=1$ for all $j\in {\mathbb N}_{n}.$ We will follow the proof of \cite[Corollary 4.5.4(d)]{a43} with appropriate modifications. First of all, note that $\lim_{k\rightarrow +\infty}k^{-1}\sum_{j=0}^{k-1}z^{j}=0$ if $|z|=1$ and $z\neq 1,$ while $\lim_{k\rightarrow +\infty}k^{-1}\sum_{j=0}^{k-1}z^{j}=1$ if $z=1.$ Our assumption is that
$$
\lim_{T\rightarrow +\infty}\frac{1}{T^{n}}\int_{[0,T]^{n}}e^{-i \langle \lambda , {\bf t} \rangle}F({\bf t})\, d{\bf t}\neq 0\mbox{ and }\lim_{T\rightarrow +\infty}\frac{1}{T^{n}}\int_{[0,T]^{n}}e^{-i \langle \mu , {\bf t} \rangle}F({\bf t})\, d{\bf t}\neq 0.
$$
By Theorem \ref{reza-rezace}, the proof of \cite[Lemma 2]{andres1} and continuity, we may assume without loss of generality that $F(\cdot)$ is $(\omega_{j},c_{j})_{j\in {\mathbb N}_{n}}$-periodic for some non-zero real numbers $\omega_{j}\in {\mathbb R}\setminus \{0\}$ ($1\leq j\leq n$). We have{\small
\begin{align*}
&\lim_{T\rightarrow +\infty}\frac{1}{T^{n}}\int_{[0,T]^{n}}e^{-i \langle \lambda , {\bf t} \rangle}F({\bf t})\, d{\bf t}
\\&=\lim_{T\rightarrow +\infty}\frac{1}{T^{n}}\sum_{j_{1}=0}^{\lfloor T/|\omega_{1}| \rfloor}
\cdot \cdot \cdot \sum_{j_{n}=0}^{\lfloor T/|\omega_{n}|\rfloor}\int_{\prod_{k=1}^{n}[j_{k}|\omega_{k}|,(j_{k}+1)|\omega_{k}|]}e^{-i \langle \lambda , {\bf t} \rangle}F({\bf t})\, d{\bf t}
\\& =\lim_{T\rightarrow +\infty}\frac{1}{T^{n}}\sum_{j_{1}=0}^{\lfloor T/|\omega_{1}| \rfloor}
\cdot \cdot \cdot \sum_{j_{n}=0}^{\lfloor T/|\omega_{n}|\rfloor}\int_{[0,|\omega_{1}|] \times \cdot \cdot \cdot \times [0,|\omega_{n}|]}e^{i \bigl[\lambda_{1}j_{1}|\omega_{1}|+\cdot \cdot \cdot +\lambda_{n}j_{n}|\omega_{n}|\bigr]}
e^{-i \langle \lambda , {\bf t} \rangle}F({\bf t})\, d{\bf t}
\\& =\lim_{T\rightarrow +\infty}\Biggl\{\Biggl[\frac{1}{T}\sum_{j_{1}=0}^{\lfloor T/|\omega_{1}| \rfloor} \Bigl( e^{i\lambda_{1}|\omega_{1}|}\Bigr)^{j_{1}}\Biggr] \cdot \cdot \cdot 
\Biggl[\frac{1}{T}\sum_{j_{n}=0}^{\lfloor T/|\omega_{n}| \rfloor} \Bigl( e^{i\lambda_{n}|\omega_{n}|}\Bigr)^{j_{n}}\Biggr] \Biggr\}
\\& =\lim_{T\rightarrow +\infty}\Biggl[\frac{1}{T}\sum_{j_{1}=0}^{\lfloor T/|\omega_{1}| \rfloor} \Bigl( e^{i\lambda_{1}|\omega_{1}|}\Bigr)^{j_{1}}\Biggr] \cdot \cdot \cdot \lim_{T\rightarrow +\infty}\Biggl[\frac{1}{T}\sum_{j_{n}=0}^{\lfloor T/|\omega_{n}| \rfloor} \Bigl( e^{i\lambda_{n}|\omega_{n}|}\Bigr)^{j_{n}}\Biggr]. 
\end{align*}}
The final conclusion follows by observing that the product of above limits, which exist in ${\mathbb C},$ is not equal to zero if and only if $\exp(i\lambda_{j}|\omega_{j}|)=1$ for all $j\in {\mathbb N}_{n},$ as well as that the same calculation can be given for the tuple $\mu.$
\end{proof}

The Stepanov classes of semi-$(c_{j},{\mathcal B})_{j\in {\mathbb N}_{n}}$-periodic functions can be also analyzed; see \cite[Section 3]{Chaouchi} for more details given in the one-dimensional setting.

\section{Multi-dimensional quasi-asymptotically $c$-almost periodic type functions}\label{profica-efg}

In this section, we investigate several various classes of multi-dimensional quasi-asymptotically $c$-almost periodic functions. We
start by introducing the notion of ${\mathbb D}$-quasi-asymptotical $({\mathcal B},I',c)$-almost periodicity and recall the notion of 
${\mathbb D}$-quasi-asymptotical $({\mathcal B},I',c)$-uniform recurrence here (it can be easily shown that the notion of quasi-asymptotical uniform recurrence introduced in \cite[Definition 9]{dumath2}, with ${\mathbb D}=I=I'={\mathbb R},$ is equivalent with the corresponding notion introduced in the second part of the following definition; concerning the first part of this definition, it extends the notion of quasi-asymptotical $c$-almost
periodicity introduced in \cite[Definition 3.3]{BIMV}):

\begin{defn}\label{nakaza}
Suppose that 
${\mathbb D} \subseteq I \subseteq {\mathbb R}^{n}$, 
$\emptyset  \neq I'\subseteq I \subseteq {\mathbb R}^{n},$ 
the sets ${\mathbb D}$ and $I'$ are unbounded,
$F : I \times X \rightarrow Y$ is a continuous function and $I +I' \subseteq I.$ Then we say that:
\begin{itemize}
\item[(i)]\index{function!${\mathbb D}$-quasi-asymptotically Bohr $({\mathcal B},I',c)$-almost periodic}
$F(\cdot;\cdot)$ is ${\mathbb D}$-quasi-asymptotically $({\mathcal B},I',c)$-almost periodic if and only if for every $B\in {\mathcal B}$ and $\epsilon>0$
there exists $l>0$ such that for each ${\bf t}_{0} \in I'$ there exists ${\bf \tau} \in B({\bf t}_{0},l) \cap I'$ such that there exists a finite real number $M(\epsilon,\tau)>0$ such that
\begin{align}\label{emojmarko145m}
\bigl\|F({\bf t}+{\bf \tau};x)-cF({\bf t};x)\bigr\|_{Y} \leq \epsilon,\mbox{ provided } {\bf t},\ {\bf t}+\tau \in {\mathbb D}_{M(\epsilon,\tau)},\ x\in B.
\end{align}
\item[(ii)] \index{function!${\mathbb D}$-quasi-asymptotically $({\mathcal B},I',c)$-uniformly recurrent}
$F(\cdot;\cdot)$ is ${\mathbb D}$-quasi-asymptotically $({\mathcal B},I',c)$-uniformly recurrent if and only if for every $B\in {\mathcal B}$ 
there exist a sequence $({\bf \tau}_{k})$ in $I'$ and a sequence $(M_{k})$ in $(0,\infty)$ such that $\lim_{k\rightarrow +\infty} |{\bf \tau}_{k}|=\lim_{k\rightarrow +\infty}M_{k}=+\infty$ and
$$
\lim_{k\rightarrow +\infty}\sup_{{\bf t},{\bf t}+{\bf \tau}_{k}\in {\mathbb D}_{M_{k}};x\in B} \bigl\|F({\bf t}+{\bf \tau}_{k};x)-c F({\bf t};x)\bigr\|_{Y} =0.
$$
\end{itemize}
If $I'=I,$ then we also say that
$F(\cdot;\cdot)$ is ${\mathbb D}$-quasi-asymptotically $({\mathcal B},c)$-almost periodic (${\mathbb D}$-quasi-asymptotically $({\mathcal B},c)$-uniformly recurrent); furthermore, if $X\in {\mathcal B},$ then it is also said that $F(\cdot;\cdot)$ is ${\mathbb D}$-quasi-asymptotically $(I',c)$-almost periodic (${\mathbb D}$-quasi-asymptotically $(I',c)$-uniformly recurrent). If $I'=I$ and $X\in {\mathcal B}$, then we also say that $F(\cdot;\cdot)$ is ${\mathbb D}$-quasi-asymptotically  $c$-almost periodic (${\mathbb D}$-quasi-asymptotically $c$-uniformly recurrent). We remove the prefix ``${\mathbb D}$-'' in the case that ${\mathbb D}=I$, remove the prefix ``$({\mathcal B},)$''  in the case that $X\in {\mathcal B}$ and remove the prefix ``$c$-'' if $c=1.$ 
\end{defn}

In \cite{marko-manuel-ap} and \cite{multi-ce}, we have also analyzed the notion of ${\mathbb D}$-asymptotical Bohr $({\mathcal B},I',c)$-almost periodicity of type $1$, which is a special case of ${\mathbb D}$-quasi-asymptotical $({\mathcal B},I',c)$-almost periodicity.
The notion of 
${\mathbb D}$-quasi-asymptotical $({\mathcal B},I',c)$-uniform recurrence, which generalizes the notion of ${\mathbb D}$-quasi-asymptotical $({\mathcal B},I',c)$-almost periodicity, has been also introduced in \cite[Definition 2.25(ii)]{multi-ce}, under the slightly different name of 
${\mathbb D}$-asymptotical $({\mathcal B},I',c)$-uniform recurrence of type $1.$ 
It is evident that the notion of ${\mathbb D}$-asymptotical Bohr $({\mathcal B},I',c)$-almost periodicity of type $1$ 
(see Definition \ref{nafaks123456789012345123}) is  a special case of the notion of ${\mathbb D}$-quasi-asymptotical $({\mathcal B},I',c)$-almost periodicity introduced in Definition \ref{nakaza}(i).
The following generalization of \cite[Proposition 2]{dumath2} can be deduced straightforwardly (we can simply formulate an extension of \cite[Proposition 3]{dumath2} in the multi-dimensional setting, as well):

\begin{prop}\label{totijerez}
Suppose that 
${\mathbb D} \subseteq I \subseteq {\mathbb R}^{n},$ $c\in {\mathbb C} \setminus \{0\}$ and the set ${\mathbb D}$ is unbounded, as well as
$\emptyset  \neq I'\subseteq I \subseteq {\mathbb R}^{n},$ $F : I \times X \rightarrow Y$ is a continuous function and $I +I' \subseteq I.$ If the function 
$F(\cdot;\cdot)$ is ${\mathbb D}$-quasi-asymptotically $({\mathcal B},I',c)$-almost periodic, resp. ${\mathbb D}$-quasi-asymptotically $({\mathcal B},I',c)$-uniformly recurrent, and $Q\in C_{0,{\mathbb D},{\mathcal B}}(I\times X : Y),$ then
$[F+Q](\cdot;\cdot)$ is ${\mathbb D}$-quasi-asymptotically $({\mathcal B},I',c)$-almost periodic, resp. ${\mathbb D}$-quasi-asymptotically $({\mathcal B},I',c)$-uniformly recurrent.
\end{prop}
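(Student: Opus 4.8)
The plan is to show that adding a function $Q \in C_{0,{\mathbb D},{\mathcal B}}(I\times X : Y)$ to a ${\mathbb D}$-quasi-asymptotically $({\mathcal B},I',c)$-almost periodic function $F$ preserves the defining property, by absorbing the contribution of $Q$ into the enlarged threshold $M$. The key point is that $Q$ vanishes at infinity along ${\mathbb D}$ uniformly on each $B\in{\mathcal B}$, so for any $\eta>0$ there is a radius beyond which $\|Q({\bf t};x)\|_Y<\eta$ for all ${\bf t}\in{\mathbb D}$ with $|{\bf t}|$ large and all $x\in B$; this is exactly the quantity I will use to control the two extra error terms produced by $Q({\bf t}+\tau;x)$ and $cQ({\bf t};x)$.

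First I would fix $B\in{\mathcal B}$ and $\epsilon>0$, and apply the hypothesis on $F$ with the tolerance $\epsilon/2$ to obtain an $l>0$ such that for each ${\bf t}_0\in I'$ there is a $\tau\in B({\bf t}_0,l)\cap I'$ and a threshold $M_1=M(\epsilon/2,\tau)>0$ with
\begin{align*}
\bigl\|F({\bf t}+\tau;x)-cF({\bf t};x)\bigr\|_Y\leq \epsilon/2,\quad {\bf t},\ {\bf t}+\tau\in {\mathbb D}_{M_1},\ x\in B.
\end{align*}
Next, using $Q\in C_{0,{\mathbb D},{\mathcal B}}(I\times X:Y)$, I would choose a threshold $M_2>0$ such that $\|Q({\bf s};x)\|_Y\leq \epsilon/(4(1+|c|))$ whenever ${\bf s}\in{\mathbb D}$, $|{\bf s}|\geq M_2$, and $x\in B$. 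Setting $M:=\max\{M_1,M_2\}$, whenever ${\bf t},{\bf t}+\tau\in{\mathbb D}_{M}$ and $x\in B$ the triangle inequality gives
\begin{align*}
\bigl\|[F+Q]({\bf t}+\tau;x)-c[F+Q]({\bf t};x)\bigr\|_Y
&\leq \bigl\|F({\bf t}+\tau;x)-cF({\bf t};x)\bigr\|_Y
\\&\quad +\bigl\|Q({\bf t}+\tau;x)\bigr\|_Y+|c|\,\bigl\|Q({\bf t};x)\bigr\|_Y
\\&\leq \epsilon/2+\epsilon/2=\epsilon,
\end{align*}
which is the required estimate with the new threshold $M(\epsilon,\tau):=M$. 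The recurrence case is analogous: here I would take the sequences $(\tau_k)$ and $(M_k)$ furnished by the definition for $F$, and for each $k$ replace $M_k$ by $M_k':=\max\{M_k,N_k\}$, where $N_k$ is a radius beyond which $\|Q\|_Y$ is small along ${\mathbb D}$ on $B$; since $M_k\to+\infty$ one can arrange $M_k'\to+\infty$ as well, and the supremum in the definition then splits into the vanishing $F$-part plus two $Q$-terms that tend to zero because ${\bf t},{\bf t}+\tau_k\in{\mathbb D}_{M_k'}$ forces $|{\bf t}|,|{\bf t}+\tau_k|\to\infty$ along ${\mathbb D}$.

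The only genuine subtlety, and the step I would be most careful with, is the interplay between the quantifier order in the almost-periodicity definition (where $M$ is allowed to depend on both $\epsilon$ and $\tau$) and the fact that $Q$ decays uniformly only on $B$ itself, not a priori on the enlarged set $B^{\cdot}$; but Definition \ref{kompleks12345} phrases the decay of $Q$ on each member of ${\mathcal B}$, which is exactly the set $B$ over which $x$ ranges in \eqref{emojmarko145m}, so no enlargement is needed and the two thresholds combine cleanly. Everything else is a routine triangle-inequality estimate, so I expect no real obstacle beyond bookkeeping of the constants.
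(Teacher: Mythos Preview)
Your argument is correct and is precisely the routine triangle-inequality computation the paper has in mind: the paper does not spell out a proof at all, merely asserting that the result ``can be deduced straightforwardly'' as a generalization of \cite[Proposition 2]{dumath2}. Your choice of constants, the combination $M:=\max\{M_1,M_2\}$, and the treatment of the uniformly recurrent case via $M_k':=\max\{M_k,N_k\}$ are exactly the expected bookkeeping, and your remark about the quantifier order and the non-necessity of the enlarged set $B^{\cdot}$ is on point.
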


We continue by providing an illustrative example:

\begin{example}\label{notaur}
The function $F : {\mathbb R}^{n} \rightarrow {\mathbb R},$ given by $F({\bf t}):=\sin(\ln(1+|{\bf t}|)),$ ${\bf t}\in {\mathbb R}^{n},$ is quasi-asymptotically almost periodic but
not asymptotically uniformly recurrent; this can be shown as in the one-dimensional case (see \cite[Example 3]{dumath2}). Furthermore, it can be easily shown that $F(\cdot)$ is quasi-asymptotically $c$-almost periodic for some $c\in {\mathbb C} \setminus \{0\}$ if and only if $c=1.$
\end{example}

In the following result, we show that the notion introduced in the previous section can be viewed as a particular case of the notion introduced in Definition \ref{nakaza}(i), under some very reasonable assumptions (in the second part, we can also consider the situation in which
$I':=\omega_{j}e_{j} \cdot {\mathbb N}$ for some $j\in {\mathbb N}_{n}$):

\begin{prop}\label{gasdas}
\begin{itemize}
\item[(i)] Let ${\bf \omega}\in I \setminus \{0\},$ $c\in {\mathbb C} \setminus \{0\},$ $|c|\leq 1,$
${\bf \omega}+I \subseteq I$ and ${\mathbb D} \subseteq I \subseteq {\mathbb R}^{n}.$ Set $I':=\omega \cdot {\mathbb N}.$
If a continuous
function $F:I \times X\rightarrow Y$ is $(S,{\mathbb D},{\mathcal B})$-asymptotically 
$(\omega,c)$-periodic, then the function $F(\cdot; \cdot)$ is ${\mathbb D}$-quasi-asymptotically $({\mathcal B},I',c)$-almost periodic.
\item[(ii)] Let ${\bf \omega}_{j}\in {\mathbb R} \setminus \{0\},$ $c_{j}\in {\mathbb C} \setminus \{0\},$
${\bf \omega}_{j}e_{j}+I \subseteq I,$ 
${\mathbb D}_{j} \subseteq I \subseteq {\mathbb R}^{n},$ the set ${\mathbb D}_{j}$ be unbounded ($1\leq j\leq n$) and the set ${\mathbb D}$ consisting of all tuples ${\bf t}\in {\mathbb D}_{n}$ such that ${\bf t}+\sum_{i=j+1}^{n}\omega_{i}e_{i}$ for all $j\in {\mathbb N}_{n-1}$
be unbounded in ${\mathbb R}^{n}$. Set $\omega:=\sum_{j=1}^{n}\omega_{j}e_{j},$ 
$I':=\omega \cdot {\mathbb N}$
and $c:=\prod_{j=1}^{n}c_{j}.$
If $F:I \times X \rightarrow Y$ is $(S,{\mathcal B})$-asymptotically $({\bf \omega}_{j},c_{j},{\mathbb D}_{j})_{j\in {\mathbb N}_{n}}$-periodic, $|c|\leq 1$ and $\omega \in I,$ then  
the function $F(\cdot;\cdot)$ is ${\mathbb D}$-quasi-asymptotically $({\mathcal B},I',c)$-almost periodic.
\end{itemize}
\end{prop}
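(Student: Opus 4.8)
The plan is to establish (i) by hand and then derive (ii) as a corollary. Fix $B\in{\mathcal B}$ and $\epsilon>0$. Since $I'=\omega\cdot{\mathbb N}$, every admissible translate has the form $\tau=m\omega$ with $m\in{\mathbb N}$ and every $\mathbf{t}_{0}\in I'$ equals $m_{0}\omega$; thus the relative-density clause of Definition \ref{nakaza}(i) reduces to producing, for each $m_{0}$, an admissible index $m$ with $|m-m_{0}|\,|\omega|\le l$, while the substance of the assertion is the estimate $\|F(\mathbf{t}+m\omega;x)-cF(\mathbf{t};x)\|_{Y}\le\epsilon$ valid for $\mathbf{t},\mathbf{t}+m\omega\in{\mathbb D}_{M(\epsilon,\tau)}$ and $x\in B$, where crucially $M(\epsilon,\tau)$ is allowed to depend on $\tau$.

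The basic tool is the telescoping identity
\begin{align*}
F(\mathbf{t}+m\omega;x)-c^{m}F(\mathbf{t};x)=\sum_{k=0}^{m-1}c^{k}\bigl[F(\mathbf{t}+(m-k)\omega;x)-cF(\mathbf{t}+(m-k-1)\omega;x)\bigr],
\end{align*}
in which every bracket is controlled by Definition \ref{drasko-presing-12345}: it is $\le\eta$ as soon as its base point $\mathbf{t}+(m-k-1)\omega$ lies in ${\mathbb D}$ and is of sufficiently large norm. Because $|c|\le 1$ the weights $|c|^{k}$ do not exceed $1$, which is where the hypothesis $|c|\le 1$ first enters. Here lies the one genuine technical nuisance: the telescoping calls on the intermediate nodes $\mathbf{t}+j\omega$, which are not among the points $\mathbf{t},\mathbf{t}+\tau$ that Definition \ref{nakaza}(i) keeps inside ${\mathbb D}$; forcing them into ${\mathbb D}_{M}$ for a suitable threshold is exactly what the assumptions $\omega\in I$, $\omega+I\subseteq I$ and the freedom to enlarge $M(\epsilon,\tau)$ with $m$ are there to guarantee.

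I expect the main obstacle to be that the telescoping yields the factor $c^{m}$ rather than the target $c$, so the discrepancy $(c^{m}-c)F(\mathbf{t};x)$ must be neutralised, and this is the second place where $|c|\le 1$ is decisive. I would distinguish two regimes. If $|c|<1$, iterating the defining relation of Definition \ref{drasko-presing-12345} shows that $\|F(\mathbf{t};x)\|_{Y}\to 0$ as $|\mathbf{t}|\to+\infty$ in ${\mathbb D}$, uniformly for $x\in B$; then both $F(\mathbf{t}+m\omega;x)$ and $cF(\mathbf{t};x)$ are small for large $\mathbf{t}$, the correction is harmless for every $m$, and one may simply take $\tau=\mathbf{t}_{0}$, so that relative density is automatic. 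If $|c|=1$, then $F$ need not decay and $c^{m}-c$ need not vanish; instead I would select $m$ so that $c^{m}$ is close to $c$. Since the sequence $(c^{m})_{m\in{\mathbb N}}$ is Bohr almost periodic, the set $\{m\in{\mathbb N}:|c^{m}-c|\le\delta\}$ is relatively dense in ${\mathbb N}$ for every $\delta>0$; choosing $\delta$ in terms of $\epsilon$ and a bound for $\|F\|_{Y}$ on ${\mathbb D}$ fixes $l$ (a bounded multiple of $|\omega|$) and simultaneously makes $|c^{m}-c|\,\|F(\mathbf{t};x)\|_{Y}\le\epsilon/2$. Combining this with the telescoped bound, in which $\eta:=\epsilon/(2m)$ and $M(\epsilon,\tau)$ is chosen accordingly, delivers $\|F(\mathbf{t}+m\omega;x)-cF(\mathbf{t};x)\|_{Y}\le\epsilon$.

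Part (ii) then requires no new work. Under its hypotheses Proposition \ref{drale} guarantees that an $(S,{\mathcal B})$-asymptotically $(\omega_{j},c_{j},{\mathbb D}_{j})_{j\in{\mathbb N}_{n}}$-periodic function is $(S,{\mathbb D},{\mathcal B})$-asymptotically $(\omega,c)$-periodic with $\omega=\sum_{j=1}^{n}\omega_{j}e_{j}$ and $c=\prod_{j=1}^{n}c_{j}$; the standing assumptions $|c|\le 1$, $\omega\in I$, $\omega+I\subseteq I$ (the last inherited from $\omega_{j}e_{j}+I\subseteq I$) and ${\mathbb D}\subseteq I$ are exactly those demanded by part (i), so applying (i) yields the ${\mathbb D}$-quasi-asymptotic $({\mathcal B},I',c)$-almost periodicity. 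The crux throughout remains the reconciliation of the telescoping factor $c^{m}$ with the required $c$: resolved by decay when $|c|<1$ and by the relative density of the near-periods of $(c^{m})$ together with control of $\|F\|_{Y}$ on ${\mathbb D}$ when $|c|=1$.
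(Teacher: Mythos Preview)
Your reduction of part (ii) to part (i) via Proposition~\ref{drale} is exactly what the paper does, so nothing to add there.

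For part (i) you have actually been \emph{more} careful than the paper. The paper's argument writes
\[
\bigl\|F({\bf t}+n\omega;x)-cF({\bf t};x)\bigr\|_{Y}\le\sum_{k=0}^{n-1}|c|^{\,n-k-1}\bigl\|F({\bf t}+(k+1)\omega;x)-cF({\bf t}+k\omega;x)\bigr\|_{Y}
\]
and proceeds as though this were a valid telescoping bound; but the right-hand side telescopes to $\|F({\bf t}+n\omega;x)-c^{\,n}F({\bf t};x)\|_{Y}$, not to $\|F({\bf t}+n\omega;x)-cF({\bf t};x)\|_{Y}$, so the displayed inequality is only correct when $c=1$. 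You spotted this discrepancy and tried to repair it, which is a genuine contribution; the paper simply glosses over the point (and silently invokes an extra hypothesis ${\mathbb D}+\omega\cdot{\mathbb N}_{0}\subseteq{\mathbb D}$ that is not in the statement).

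That said, your repair is itself not complete. In the case $|c|=1$ you need $\sup_{x\in B}\sup_{{\bf t}\in{\mathbb D}}\|F({\bf t};x)\|_{Y}<\infty$ to turn $|c^{m}-c|$ into a usable bound on $|c^{m}-c|\cdot\|F({\bf t};x)\|_{Y}$, and no such boundedness is among the hypotheses of the proposition. In the case $|c|<1$ your claim that iteration of Definition~\ref{drasko-presing-12345} forces $\|F({\bf t};x)\|_{Y}\to 0$ along ${\mathbb D}$ is not justified either: the defining limit is taken only along points of ${\mathbb D}$, and without something like ${\mathbb D}+\omega\subseteq{\mathbb D}$ you cannot iterate backwards along $\omega$-orbits inside ${\mathbb D}$ to trap an arbitrary large ${\bf t}\in{\mathbb D}$ between controlled nodes. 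So the $c^{m}$-versus-$c$ problem you isolate is real, but neither your argument nor the paper's disposes of it under the hypotheses as stated; an additional assumption (boundedness of $F$ on $B$, or closure of ${\mathbb D}$ under translation by $\omega$) appears to be needed.
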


\begin{proof}
The proof of (i) is very similar to the proof of \cite[Proposition 3.2]{BIMV}. First of all, note that our assumptions ${\bf \omega}\in I \setminus \{0\}$ and ${\mathbb D} +\omega \cdot {\mathbb N}_{0} \subseteq {\mathbb D}$ imply that the set ${\mathbb D}$ is unbounded, whilst the assumptions ${\bf \omega}\in I \setminus \{0\}$ and $\omega +I \subseteq I$
imply that $I'$ is an unbounded subset of $I$ and 
$I +I' \subseteq I.$
Let $B\in {\mathcal B}$ and $\epsilon>0$ be fixed. Then we can take $l=2|\omega|$ in Definition \ref{nakaza}(i) since for each ${\bf t}_{0}=n'\omega \in I',$ where $n'\in {\mathbb N},$ there exists ${\bf \tau}=n\omega \in B({\bf t}_{0},l) \cap I'$, with $n'=n+1.$ Since the function $F(\cdot;\cdot)$ is  $(S,{\mathbb D},{\mathcal B})$-asymptotically 
$(\omega,c)$-periodic,
we have the existence of a finite real number $M>0$ such that the assumptions $|{\bf t}|>M$ and $
{\bf t}\in {\mathbb D}$ imply $\| F({\bf t}+\omega ; x)-cF({\bf t} ; x)\|<\epsilon /n$ for all $x\in B.$ 
Let ${\bf t}\in {\mathbb D}$ and $|{\bf t}|>M(\epsilon,\tau)\equiv  M+n|\omega|.$
Then
\eqref{emojmarko145m} holds since the assumptions ${\bf t},\ {\bf t}+\tau \in {\mathbb D}_{M(\epsilon,\tau)}$ and $ x\in B$ imply:
\begin{align*}
\bigl\| & F({\bf t}+{\bf \tau};x)-cF({\bf t};x)\bigr\|_{Y} 
\\& \leq \sum_{k=0}^{n-1}|c|^{n-k-1}\Bigl\| F({\bf t}+(k+1)\omega;x)-cF({\bf t}+k\omega;x) \Bigr\|_{Y}
\\& \leq \sum_{k=0}^{n-1} \Bigl\| F({\bf t}+(k+1)\omega;x)-cF({\bf t}+k\omega;x) \Bigr\|_{Y} 
\leq n(\epsilon/n)=\epsilon,
\end{align*}
as claimed.
To deduce (ii), it suffices to observe that our assumptions imply by Proposition \ref{drale}
that the function $F(\cdot;\cdot)$ is $(S,{\mathbb D},{\mathcal B})$-asymptotically 
$(\omega,c)$-periodic, with $\omega=\sum_{j=1}^{n}\omega_{j}e_{j}.$
After that, we can apply the first part of proposition.
\end{proof}

The spaces introduced in Definition \ref{nakaza} do not form vector spaces under the pointwise addition of functions and these spaces are not closed under the pointwise multiplication with scalar-valued functions of the same type, as is well known in the one-dimensional case (\cite{brazil}).
The introduced spaces are homogeneous and, under certain reasonable assumptions, these spaces are translation invariant, invariant under the
homotheties with ratio $b>0$ and the reflections at zero with respect to the first variable; details can be left to the interested readers. 
Furthermore, we have the following statements stated here without simple proofs (see also \cite[Proposition 2.7, Proposition 2.8]{marko-manuel-ap}):

\begin{prop}\label{jkljkl}
\begin{itemize}
\item[(i)] Suppose that 
${\mathbb D} \subseteq I \subseteq {\mathbb R}^{n}$, 
$\emptyset  \neq I'\subseteq I \subseteq {\mathbb R}^{n},$ 
the sets ${\mathbb D}$ and $I'$ are unbounded,
$F : I \times X \rightarrow {\mathbb C}$ is a continuous function and $I +I' \subseteq I.$ 
\begin{itemize}
\item[(i)] If
$F(\cdot;\cdot)$ is ${\mathbb D}$-quasi-asymptotically $({\mathcal B},I',c)$-almost periodic and, for every $B\in {\mathcal B},$ 
there exists a real number $c_{B}>0$ such that $| F({\bf t} ; x)|\geq c_{B}$ for all $x\in B$ and ${\bf t}\in I,$
then the function $1/F(\cdot ; \cdot)$ is ${\mathbb D}$-quasi-asymptotically $({\mathcal B},I',1/c)$-almost periodic.
\item[(ii)] \index{function!${\mathbb D}$-quasi-asymptotically $({\mathcal B},I',c)$-uniformly recurrent  of type $1$}
$F(\cdot;\cdot)$ is ${\mathbb D}$-quasi-asymptotically $({\mathcal B},I',c)$-uniformly recurrent if and only if for every $B\in {\mathcal B}$ 
there exist a sequence $({\bf \tau}_{k})$ in $I'$ and a sequence $(M_{k})$ in $(0,\infty)$ such that $\lim_{k\rightarrow +\infty} |{\bf \tau}_{k}|=\lim_{k\rightarrow +\infty}M_{k}=+\infty$ and
$$
\lim_{k\rightarrow +\infty}\sup_{{\bf t},{\bf t}+{\bf \tau}_{k}\in {\mathbb D}_{M_{k}};x\in B} \bigl\|F({\bf t}+{\bf \tau}_{k};x)-c F({\bf t};x)\bigr\|_{Y} =0.
$$
\end{itemize}
\item[(ii)] If $(F_{k}(\cdot;\cdot))$ is a sequence of ${\mathbb D}$-quasi-asymptotically $({\mathcal B},I',c)$-almost periodic functions, resp. ${\mathbb D}$-quasi-asymptotically $({\mathcal B},I',c)$-uniformly recurrent functions, such that 
for each $B\in {\mathcal B}$ there exists a finite real number $\epsilon_{B}>0$ such that
$\lim_{k\rightarrow +\infty}F_{k}({\bf t};x)=F({\bf t};x)$ for all ${\bf t}\in {\mathbb R}$, uniformly in $x\in B^{\cdot} \equiv B^{\circ} \cup \bigcup_{x\in \partial B}B(x,\epsilon_{B}),$ then the function $F(\cdot; \cdot)$ is 
 ${\mathbb D}$-quasi-asymptotically $({\mathcal B},I',c)$-almost periodic, resp. ${\mathbb D}$-quasi-asymptotically $({\mathcal B},I',c)$-uniformly recurrent.
\end{itemize}
\end{prop}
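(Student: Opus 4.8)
The plan is to verify each assertion directly against Definition \ref{nakaza}, handling the reciprocal claim and the sequential-closure claim separately; in both cases the key is that the structural quantifiers $(l,\tau,M)$ furnished by the hypothesis can be reused verbatim for the conclusion. For the reciprocal statement, I would first observe that $1/F(\cdot;\cdot)$ is continuous on $I\times X$, since $F$ is continuous and, on each $B\in{\mathcal B}$, bounded away from zero by $|F({\bf t};x)|\geq c_{B}$. The heart of the matter is the elementary identity
\[
\frac{1}{F({\bf t}+\tau;x)}-\frac{1}{c}\cdot\frac{1}{F({\bf t};x)}=-\,\frac{F({\bf t}+\tau;x)-cF({\bf t};x)}{c\,F({\bf t}+\tau;x)\,F({\bf t};x)},
\]
which, together with the lower bound $|F|\geq c_{B}$, yields the pointwise estimate
\[
\Bigl|\tfrac{1}{F({\bf t}+\tau;x)}-\tfrac{1}{c}\tfrac{1}{F({\bf t};x)}\Bigr|\leq\frac{1}{|c|\,c_{B}^{2}}\,\bigl|F({\bf t}+\tau;x)-cF({\bf t};x)\bigr|.
\]
Given $B\in{\mathcal B}$ and $\epsilon>0$, I would then apply the ${\mathbb D}$-quasi-asymptotical $({\mathcal B},I',c)$-almost periodicity of $F$ with tolerance $\epsilon':=|c|\,c_{B}^{2}\,\epsilon$; the $l>0$, the translate $\tau\in B({\bf t}_{0},l)\cap I'$ attached to each ${\bf t}_{0}\in I'$, and the associated $M(\epsilon',\tau)$ all transfer unchanged to $1/F$ with constant $1/c$, because on ${\mathbb D}_{M(\epsilon',\tau)}$ the displayed estimate forces the left-hand side below $\epsilon$.

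For the sequential-closure statement, I would run the standard three-term decomposition. Continuity of the limit $F$ follows exactly as in the proof of Proposition \ref{piksi}, invoking \cite[Proposition 2.7, Proposition 2.8]{marko-manuel-ap}; this is where the enlarged sets $B^{\cdot}=B^{\circ}\cup\bigcup_{x\in\partial B}B(x,\epsilon_{B})$ are needed, to secure joint continuity in $({\bf t},x)$. In the almost periodic case, fix $B$ and $\epsilon>0$, choose $k_{0}$ with $\sup_{{\bf t}\in I,\,x\in B}|F_{k_{0}}({\bf t};x)-F({\bf t};x)|\leq\epsilon/(2(1+|c|))$ by the uniform convergence, and apply the $({\mathcal B},I',c)$-almost periodicity of $F_{k_{0}}$ with tolerance $\epsilon/2$. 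For the data $(l,\tau,M(\epsilon/2,\tau))$ returned for $F_{k_{0}}$, the decomposition
\[
F({\bf t}+\tau;x)-cF({\bf t};x)=\bigl[F-F_{k_{0}}\bigr]({\bf t}+\tau;x)+\bigl[F_{k_{0}}({\bf t}+\tau;x)-cF_{k_{0}}({\bf t};x)\bigr]+c\bigl[F_{k_{0}}-F\bigr]({\bf t};x)
\]
bounds the total by $\epsilon/(2(1+|c|))+\epsilon/2+|c|\,\epsilon/(2(1+|c|))=\epsilon$ whenever ${\bf t},{\bf t}+\tau\in{\mathbb D}_{M(\epsilon/2,\tau)}$ and $x\in B$, so the very same $(l,\tau,M(\epsilon/2,\tau))$ witness the property for $F$.

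The uniformly recurrent case is the only part requiring genuine care, and it is the step I expect to be the main obstacle: each $F_{k}$ supplies its own pair of sequences $(\tau_{m}^{(k)})$ and $(M_{m}^{(k)})$, and I must manufacture a single admissible pair for $F$. The plan is a diagonal selection. Fix $B$ and a sequence $\epsilon_{p}\downarrow0$; pick indices $k_{p}$ with $\sup_{{\bf t}\in I,\,x\in B}|F_{k_{p}}({\bf t};x)-F({\bf t};x)|\leq\epsilon_{p}$, and then, using that the defining supremum for $F_{k_{p}}$ tends to $0$, select $m_{p}$ large enough that simultaneously $|\tau_{m_{p}}^{(k_{p})}|\geq p$, $M_{m_{p}}^{(k_{p})}\geq p$, and the $F_{k_{p}}$-supremum over $\{{\bf t},{\bf t}+\tau_{m_{p}}^{(k_{p})}\in{\mathbb D}_{M_{m_{p}}^{(k_{p})}}\}\times B$ is $\leq\epsilon_{p}$. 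Writing $\tilde\tau_{p}:=\tau_{m_{p}}^{(k_{p})}$ and $\tilde M_{p}:=M_{m_{p}}^{(k_{p})}$, the divergence requirements $|\tilde\tau_{p}|\to\infty$ and $\tilde M_{p}\to\infty$ hold by construction, while the same three-term decomposition gives
\[
\sup_{{\bf t},\,{\bf t}+\tilde\tau_{p}\in{\mathbb D}_{\tilde M_{p}};\,x\in B}\bigl|F({\bf t}+\tilde\tau_{p};x)-cF({\bf t};x)\bigr|\leq(2+|c|)\,\epsilon_{p}\to0,
\]
so $(\tilde\tau_{p})$ and $(\tilde M_{p})$ witness the uniform recurrence of $F$. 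The delicate point is precisely the coordination of the two limits — the uniform-approximation index $k_{p}$ and the recurrence index $m_{p}$ — which the diagonal choice resolves without any appeal to compactness.
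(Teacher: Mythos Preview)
Your argument is correct and is exactly the routine verification the paper has in mind: the authors state Proposition \ref{jkljkl} ``without simple proofs'' and give no argument at all, merely pointing to \cite[Proposition 2.7, Proposition 2.8]{marko-manuel-ap} for the continuity-of-the-limit step. Your reciprocal identity with the $|c|c_{B}^{2}$ rescaling, the three-term splitting for the almost periodic closure, and the diagonal extraction for the uniformly recurrent closure are precisely the standard moves one would use here. One minor remark: you do not discuss part (i)(ii), but as written it is verbatim Definition \ref{nakaza}(ii) and hence tautological (it appears to be a transcription slip in the paper), so nothing is missing from your proof.
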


The proof of following result is very similar to that of Theorem \ref{milenko} and therefore omitted (the assumption on compact support of function $h(\cdot)$ made in \cite{dumath2} for the class of quasi-asymptotically uniformly recurrent functions is superfluous):

\begin{thm}\label{milenko1}
Suppose that $h\in L^{1}({\mathbb R}^{n}),$ $\emptyset  \neq I'\subseteq {\mathbb R}^{n}$ is unbounded and $F : {\mathbb R}^{n} \times X \rightarrow Y$ is a continuous function satisfying that for each $B\in {\mathcal B}$ there exists a finite real number $\epsilon_{B}>0$ such that
$\sup_{{\bf t}\in {\mathbb R}^{n},x\in B^{\cdot}}\|F({\bf t},x)\|_{Y}<+\infty,$
where $B^{\cdot} \equiv B^{\circ} \cup \bigcup_{x\in \partial B}B(x,\epsilon_{B}).$ 
Suppose that ${\mathbb D}={\mathbb R}^{n}.$ Then the function $(h\ast F)(\cdot;\cdot),$ given by 
\eqref{gariprekrsaj},
is well defined and for each $B\in {\mathcal B}$ we have $\sup_{{\bf t}\in {\mathbb R}^{n},x\in B^{\cdot}}\|(h\ast F)({\bf t};x)\|_{Y}<+\infty;$ furthermore, if $F(\cdot;\cdot)$ is 
${\mathbb R}^{n}$-quasi-asymptotically $({\mathcal B},I',c)$-almost periodic, resp. ${\mathbb R}^{n}$-quasi-asymptotically $({\mathcal B},I',c)$-uniformly recurrent,
then the function $(h\ast F)(\cdot;\cdot)$ is likewise ${\mathbb R}^{n}$-quasi-asymptotically $({\mathcal B},I',c)$-almost periodic,
resp.\\ ${\mathbb R}^{n}$-quasi-asymptotically $({\mathcal B},I',c)$-uniformly recurrent.
\end{thm}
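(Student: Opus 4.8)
The plan is to reproduce, nearly verbatim, the first portion of the proof of Theorem~\ref{milenko}. Since $h\in L^{1}({\mathbb R}^{n})$ and, for each $B\in{\mathcal B}$, the function $F(\cdot;\cdot)$ is bounded on ${\mathbb R}^{n}\times B^{\cdot}$, the integral in \eqref{gariprekrsaj} converges absolutely, whence $(h\ast F)(\cdot;\cdot)$ is well defined and $\sup_{{\bf t}\in{\mathbb R}^{n},x\in B^{\cdot}}\|(h\ast F)({\bf t};x)\|_{Y}\leq\|h\|_{L^{1}}\sup_{{\bf t}\in{\mathbb R}^{n},x\in B^{\cdot}}\|F({\bf t};x)\|_{Y}<+\infty$; its joint continuity follows from the dominated convergence theorem exactly as before. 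The whole substance therefore lies in transferring the two asymptotic properties, and I would dispatch the almost periodic case first.

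Fix $B\in{\mathcal B}$ and $\epsilon>0$, and put $c_{B}:=(1+|c|)\sup_{{\bf t}\in{\mathbb R}^{n},x\in B^{\cdot}}\|F({\bf t};x)\|_{Y}$, which dominates the integrand $\|F({\bf u}+{\bf \tau};x)-cF({\bf u};x)\|_{Y}$ uniformly in ${\bf u}$ and $x\in B$ (if $c_{B}=0$ the assertion is trivial). I would invoke ${\mathbb R}^{n}$-quasi-asymptotic $({\mathcal B},I',c)$-almost periodicity of $F$ with the sharpened tolerance $\epsilon':=\epsilon/[2(1+\|h\|_{L^{1}})]$, obtaining a number $l>0$, and I keep this same $l$ for $h\ast F$. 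Given ${\bf t}_{0}\in I'$, select the corresponding ${\bf \tau}\in B({\bf t}_{0},l)\cap I'$ and threshold $M_{1}:=M(\epsilon',{\bf \tau})$. After the change of variables ${\bf u}={\bf t}-\sigma$ in \eqref{gariprekrsaj} one writes
\begin{align*}
(h\ast F)({\bf t}+{\bf \tau};x)-c(h\ast F)({\bf t};x)=\int_{{\mathbb R}^{n}}h({\bf t}-{\bf u})\bigl[F({\bf u}+{\bf \tau};x)-cF({\bf u};x)\bigr]\,d{\bf u}.
\end{align*}

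The crucial step, and the only genuine departure from the proof of Theorem~\ref{milenko}, is the splitting of this integral. Because Definition~\ref{nakaza}(i) requires both ${\bf u}$ and ${\bf u}+{\bf \tau}$ to lie in ${\mathbb D}_{M_{1}}=\{|\cdot|\geq M_{1}\}$, the integrand is bounded by $\epsilon'$ only on the good region $G:=\{{\bf u}:|{\bf u}|\geq M_{1}\text{ and }|{\bf u}+{\bf \tau}|\geq M_{1}\}$, whereas on the bad region its only bound is $c_{B}$; the point is that the bad region is contained in $B(0,M_{1})\cup B(-{\bf \tau},M_{1})\subseteq B(0,M_{1}+|{\bf \tau}|)$, hence bounded. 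Splitting accordingly and changing variables once more (${\bf v}={\bf t}-{\bf u}$) on the bad part gives
\begin{align*}
\bigl\|(h\ast F)({\bf t}+{\bf \tau};x)-c(h\ast F)({\bf t};x)\bigr\|_{Y}\leq\epsilon'\|h\|_{L^{1}}+c_{B}\int_{B({\bf t},\,M_{1}+|{\bf \tau}|)}|h({\bf v})|\,d{\bf v}.
\end{align*}
The first summand is at most $\epsilon/2$ by the choice of $\epsilon'$. For the second, choose $M_{2}>0$ with $\int_{|{\bf v}|\geq M_{2}}|h({\bf v})|\,d{\bf v}<\epsilon/(2c_{B})$; then $|{\bf t}|\geq M_{1}+|{\bf \tau}|+M_{2}$ forces $B({\bf t},M_{1}+|{\bf \tau}|)\subseteq\{|{\bf v}|\geq M_{2}\}$ and so the second summand is $<\epsilon/2$. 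Setting $M(\epsilon,{\bf \tau}):=M_{1}+|{\bf \tau}|+M_{2}$ then yields \eqref{emojmarko145m} for $h\ast F$, which is the desired almost periodic conclusion.

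For the uniformly recurrent case I would run the identical estimate along sequences. Given $B\in{\mathcal B}$, take $({\bf \tau}_{k})$ in $I'$ and $(M_{k})$ in $(0,\infty)$ as in Definition~\ref{nakaza}(ii), with $\delta_{k}:=\sup_{{\bf t},{\bf t}+{\bf \tau}_{k}\in{\mathbb D}_{M_{k}};x\in B}\|F({\bf t}+{\bf \tau}_{k};x)-cF({\bf t};x)\|_{Y}\to 0$. Retaining the shifts ${\bf \tau}_{k}$, I would pick $N_{k}$ with $\int_{|{\bf v}|\geq N_{k}}|h({\bf v})|\,d{\bf v}<1/k$ and set $M_{k}':=M_{k}+|{\bf \tau}_{k}|+N_{k}$, so that $M_{k}'\to+\infty$. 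The same good/bad splitting (now at threshold $M_{k}$) produces, for all ${\bf t},{\bf t}+{\bf \tau}_{k}\in{\mathbb D}_{M_{k}'}$ and $x\in B$,
\begin{align*}
\bigl\|(h\ast F)({\bf t}+{\bf \tau}_{k};x)-c(h\ast F)({\bf t};x)\bigr\|_{Y}\leq\delta_{k}\|h\|_{L^{1}}+c_{B}/k,
\end{align*}
whose right-hand side tends to $0$; hence the supremum over the indicated set does too. The main obstacle, as already flagged, is purely one of bookkeeping: unlike the fixed-period setting of Theorem~\ref{milenko}, where non-smallness is confined to the single ball $B(0,M_{1})$, the quasi-asymptotic hypothesis spreads it across the two balls $B(0,M_{1})$ and $B(-{\bf \tau},M_{1})$; once one notes that their union still sits inside a ball of radius $M_{1}+|{\bf \tau}|$, the $L^{1}$-tail decay of $h$ absorbs the extra term precisely as in the one-ball case.
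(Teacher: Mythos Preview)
Your proposal is correct and follows exactly the approach the paper indicates: the paper omits the proof entirely, stating only that it is ``very similar to that of Theorem~\ref{milenko},'' and your argument is precisely that adaptation. You have correctly isolated the one genuine modification---that in the quasi-asymptotic setting the ``bad'' region is the union $B(0,M_{1})\cup B(-{\bf \tau},M_{1})$ rather than a single ball---and your containment $B(0,M_{1})\cup B(-{\bf \tau},M_{1})\subseteq B(0,M_{1}+|{\bf \tau}|)$ together with the $L^{1}$-tail argument dispatches it cleanly; the uniformly recurrent case is handled with the same care.
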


Accepting the notation employed in \cite{brazil} and \cite{dumath2}, we have the following ($I={\mathbb R}$ or $I=[0,\infty);$ $\omega \in I$): 
\begin{itemize}
\item[(i)] Suppose that $f\in SAP_{\omega}({\mathbb R}: X) \cap AAA({\mathbb R} : X),$ resp. $f\in SAP_{\omega}(I: X) \cap AAP(I : X).$  Then $f\in AP_{\omega}({\mathbb R}:X),$ resp. $f\in AP_{\omega}(I:X).$
\item[(ii)] Suppose that $f\in SAP_{\omega}({\mathbb R}: X) \cap AA({\mathbb R} : X),$
resp.
$f\in SAP_{\omega}(I: X) \cap AP(I : X).$ Then $f\in C_{\omega}({\mathbb R}:X),$ resp. $f\in C_{\omega}(I:X).$
\item[(iii)] $AAA({\mathbb R} : X) \cap Q-AAP({\mathbb R} : X)=AAP({\mathbb R}: X)$ and $[AAA({\mathbb R} : X) \setminus AAP({\mathbb R} : X)] \cap Q-AAP({\mathbb R}:X)=\emptyset.$
\item[(iv)] $AA({\mathbb R}: X) \cap Q-AAP({\mathbb R}: X)=AP({\mathbb R}: X).$
\item[(v)]
Let ${\mathrm F}(I : X)$ be any space of functions $h : I \rightarrow X$ satisfying that for each $\tau \in I$ the supremum formula holds for
the function $h(\cdot+\tau)-h(\cdot),$ i.e.,
$$
\sup_{t\in I}\| h(\cdot+\tau)-h(\cdot)\|=\sup_{t\in I,\ |t|\geq a}\| h(\cdot+\tau)-h(\cdot)\|,\quad a\in I.
$$
Then we have:
$[{\mathrm F}(I : X) +C_{0}(I:X) ]\cap Q-AUR(I : X) \subseteq AUR(I: X)$
and ${\mathrm F}(I : X) \cap Q-AUR(I : X) \subseteq UR(I: X).$
\end{itemize}
Furthermore, the above statements can be reformulated for the corresponding Stepanov classes. 

We will only note here that these statements admit very simple generalizations in the multi-dimensional setting. For example, if $I={\mathbb R}^{n}$ or $I=[0,\infty)^{n}$ and
the function $F : I \rightarrow Y$ is both
$S$-asymptotically $(\omega_{j},c_{j},I)_{j\in {\mathbb N}_{n}}$-periodic and $I$-asymptotically Bohr $(I,1)$-almost periodic, then the function $F(\cdot)$ is $(\omega_{j},c_{j})_{j\in {\mathbb N}_{n}}$-periodic (see also \cite[Example 4]{dumath2}, which can be used to provide certain examples of compactly almost automorphic functions in ${\mathbb R}^{n}$ which are not quasi-asymptotically uniformly recurrent \cite{marko-manuel-aa}-\cite{stmarko-manuel-aa}). The crucial thing is that the proof of \cite[Theorem 1]{dumath2}
works in the multi-dimensional setting (see the item (v) above).

\section{Stepanov classes of quasi-asymptotically $c$-almost periodic type functions}\label{zlatnidecko}

In this section, we investigate the Stepanov classes of quasi-asymptotically $c$-almost periodic type functions (the Weyl and Besicovitch generalizations of quasi-asymptotically $c$-almost periodic type functions can be also introduced and analyzed but we will skip all related details concerning this issue here). We will always assume that $c\in {\mathbb C}\setminus \{0\},$
$\Omega$ is a fixed compact subset of ${\mathbb R}^{n}$ with positive Lebesgue measure, $\emptyset \neq \Lambda \subseteq {\mathbb R}^{n}$ satisfies $\Lambda +\Omega \subseteq \Lambda ,$ 
${\mathbb D} \subseteq \Lambda \subseteq {\mathbb R}^{n},$ $\emptyset  \neq  \Lambda'\subseteq  \Lambda \subseteq {\mathbb R}^{n}$,  the sets ${\mathbb D}$ and $\Lambda'$ are unbounded, as well as $ \Lambda + \Lambda' \subseteq \Lambda.$

We employ the following conditions:
\begin{enumerate}
\item[$(MD-B)_{S}:$] $\phi :[0,\infty) \rightarrow [0,\infty) ,$ $ p\in {\mathcal P}(\Omega),$  $\textsf{F} : \Lambda
    \times (0,\infty) \times \Lambda' \rightarrow (0,\infty),$
${\bf F} : \Lambda \times {\mathbb N} \rightarrow (0,\infty)$ and $\texttt{F} : \Lambda \rightarrow (0,\infty).$ 
\end{enumerate}

We will follow the approach obeyed for introduction of notion in \cite[Definition 13-Definition 15]{dumath2}, only, in which we do not loose the valuable  information about the translation invariance of introduced spaces (cf. also \cite[Definition 10-Definition 12]{dumath2}, where this is not the case):

\begin{defn}\label{gorilazurappf}
Let $(MD-B)_{S}$ hold.
\begin{enumerate}
\item[(i)] A function $F : \Lambda \times X \rightarrow Y$ is called
Stepanov-$[\Omega, {\mathcal B}, \Lambda',{\mathbb D}, p,\phi,\textsf{F},c]$-quasi-asymptotically almost periodic, resp. Stepanov-$[\Omega, {\mathcal B}, \Lambda',{\mathbb D}, p,\phi,{\bf F},c]$-quasi-asymptotically uniformly recurrent,
if and only if 
for every $B\in {\mathcal B}$ and $\epsilon>0$
there exists $l>0$ such that for each ${\bf t}_{0} \in \Lambda'$ there exists ${\bf \tau} \in B({\bf t}_{0},l) \cap \Lambda'$ such that there exists a finite real number $M(\epsilon,\tau)>0$ such that
\begin{align}\label{zeleni}
\sup_{{\bf t}\in {\mathbb D}_{M(\epsilon,\tau)}: {\bf t}+\tau \in {\mathbb D}_{M(\epsilon,\tau)}; x\in B}\textsf{F}({\bf t},\epsilon,\tau)\phi\bigl(\bigl\| F(\cdot+{\bf t}+\tau ;x)-c F(\cdot +{\bf t}; x)\bigr\|_{Y}\bigr)_{L^{p(\cdot)}(\Omega)}\leq
\epsilon,
\end{align}
resp.
there exist a strictly increasing sequence $(\tau_{k})$ in $\Lambda'$ whose norms tending to plus infinity and a sequence $(M_{k})$
of positive real numbers tending to plus infinity such that
\begin{align*}
\lim_{k\rightarrow +\infty}\sup_{{\bf t}\in {\mathbb D}_{M_{k}} : {\bf t}+\tau_{k}\in  {\mathbb D}_{M_{k}}; x\in B}{\bf F}({\bf t},k)\phi\bigl(\bigl\| F(\cdot+{\bf t}+\tau_{k};x)-cF(\cdot +{\bf t};x)\bigr\|_{Y}\bigr)_{L^{p(\cdot)}(\Omega)}=0.
\end{align*}
\item[(ii)]
Let ${\bf \omega}\in {\mathbb R}^{n} \setminus \{0\},$ $c\in {\mathbb C} \setminus \{0\},$
${\bf \omega}+\Lambda\subseteq \Lambda,$ ${\mathbb D} \subseteq \Lambda \subseteq {\mathbb R}^{n}$ and the set ${\mathbb D}$ be unbounded. A 
function $F: \Lambda \times X\rightarrow Y$ is said to be Stepanov $[S,\Omega, {\mathcal B}, {\mathbb D}, p,\phi,\texttt{F}]$-asymptotically
$(\omega,c)$-periodic if and only if for each $B\in {\mathcal B}$ we have{\small
\begin{align*}
\lim_{|{\bf t}|\rightarrow +\infty,{\bf t}\in {\mathbb D}}\texttt{F}({\bf t}) \phi\bigl(\bigl\| F({\bf t}+\omega +\cdot ; x)-cF({\bf t}+\cdot ; x)\bigr\|_{Y}\bigr)_{L^{p(\cdot)}(\Omega)}=0,\ \mbox{ uniformly in }x\in B.
\end{align*}}
\item[(iii)]
Let ${\bf \omega}_{j}\in {\mathbb R} \setminus \{0\},$ $c_{j}\in {\mathbb C} \setminus \{0\},$
${\bf \omega}_{j}e_{j}+\Lambda \subseteq \Lambda$,
${\mathbb D}_{j} \subseteq \Lambda \subseteq {\mathbb R}^{n}$ and the set ${\mathbb D}_{j}$ be unbounded ($1\leq j\leq n$).
A 
function $F:\Lambda \times X\rightarrow Y$ is said to be  $[S, \Omega, {\mathcal B}, {\mathbb D}, p,\phi,\texttt{F}]$-asymptotically $({\bf \omega}_{j},c_{j},{\mathbb D}_{j})_{j\in {\mathbb N}_{n}}$-periodic if and only if for each $ j\in {\mathbb N}_{n}$ we have{\small
\begin{align*}
\lim_{|{\bf t}|\rightarrow +\infty,{\bf t}\in {\mathbb D}_{j}}\texttt{F}({\bf t}) \phi\bigl(\bigl\|F({\bf t}+{\bf \omega}_{j}e_{j}+\cdot;x)-c_{j}F({\bf t}+\cdot;x)\bigr\|_{Y}\bigr)_{L^{p(\cdot)}(\Omega)}=0, \mbox{ uniformly in }x\in B.
\end{align*}}
\end{enumerate}
\end{defn}

\begin{defn}\label{gorilazurappfmd1}
Let $(MD-B)_{S}$ hold.
\begin{enumerate}
\item[(i)] A function $F : \Lambda \times X \rightarrow Y$ is called
Stepanov-$[\Omega, {\mathcal B}, \Lambda',{\mathbb D}, p,\phi,\textsf{F},c]$-quasi-asymptotically almost periodic of type $1$, resp.\\ Stepanov-$[\Omega, {\mathcal B}, \Lambda',{\mathbb D}, p,\phi,{\bf F},c]$-quasi-asymptotically uniformly recurrent  of type $1$,
if and only if 
for every $B\in {\mathcal B}$ and $\epsilon>0$
there exists $l>0$ such that for each ${\bf t}_{0} \in \Lambda'$ there exists ${\bf \tau} \in B({\bf t}_{0},l) \cap \Lambda'$ such that there exists a finite real number $M(\epsilon,\tau)>0$ such that
\begin{align*}
\sup_{{\bf t}\in {\mathbb D}_{M(\epsilon,\tau)}: {\bf t}+\tau \in {\mathbb D}_{M(\epsilon,\tau)}; x\in B}\textsf{F}({\bf t},\epsilon,\tau)\phi\Bigl(\bigl\| F(\cdot+{\bf t}+\tau ;x)-c F(\cdot +{\bf t}; x)\bigr\|_{L^{p(\cdot)}(\Omega :Y)}\Bigr)\leq
\epsilon,
\end{align*}
resp.
there exist a strictly increasing sequence $(\tau_{k})$ in $\Lambda'$ whose norms tending to plus infinity and a sequence $(M_{k})$
of positive real numbers tending to plus infinity such that
\begin{align*}
\lim_{k\rightarrow +\infty}\sup_{{\bf t}\in {\mathbb D}_{M_{k}} : {\bf t}+\tau_{k}\in  {\mathbb D}_{M_{k}}; x\in B}{\bf F}({\bf t},k)\phi\Bigl(\bigl\| F(\cdot+{\bf t}+\tau_{k};x)-cF(\cdot +{\bf t};x)\bigr\|_{L^{p(\cdot)}(\Omega :Y)}\Bigr)=0.
\end{align*}
\item[(ii)]
Let ${\bf \omega}\in {\mathbb R}^{n} \setminus \{0\},$ $c\in {\mathbb C} \setminus \{0\},$
${\bf \omega}+\Lambda\subseteq \Lambda,$ ${\mathbb D} \subseteq \Lambda \subseteq {\mathbb R}^{n}$ and the set ${\mathbb D}$ be unbounded. A 
function $F: \Lambda \times X\rightarrow Y$ is said to be Stepanov $[S,\Omega, {\mathcal B}, {\mathbb D}, p,\phi,\texttt{F}]$-asymptotically
$(\omega,c)$-periodic  of type $1$ if and only if for each $B\in {\mathcal B}$ we have{\small
\begin{align*}
\lim_{|{\bf t}|\rightarrow +\infty,{\bf t}\in {\mathbb D}}\texttt{F}({\bf t}) \phi\Bigl(\bigl\| F({\bf t}+\omega +\cdot ; x)-cF({\bf t}+\cdot ; x)\bigr\|_{L^{p(\cdot)}(\Omega :Y)}\Bigr)=0,\ \mbox{ uniformly in }x\in B.
\end{align*}}
\item[(iii)]
Let ${\bf \omega}_{j}\in {\mathbb R} \setminus \{0\},$ $c_{j}\in {\mathbb C} \setminus \{0\},$
${\bf \omega}_{j}e_{j}+\Lambda \subseteq \Lambda$,
${\mathbb D}_{j} \subseteq \Lambda \subseteq {\mathbb R}^{n}$ and the set ${\mathbb D}_{j}$ be unbounded ($1\leq j\leq n$).
A 
function $F:\Lambda \times X\rightarrow Y$ is said to be  $[S,\Omega, {\mathcal B}, {\mathbb D}, p,\phi,\texttt{F}]$-asymptotically $({\bf \omega}_{j},c_{j},{\mathbb D}_{j})_{j\in {\mathbb N}_{n}}$-periodic of type $1$ if and only if for each $ j\in {\mathbb N}_{n}$ we have{\small
\begin{align*}
\lim_{|{\bf t}|\rightarrow +\infty,{\bf t}\in {\mathbb D}_{j}}\texttt{F}({\bf t}) \phi\bigl(\bigl\|F({\bf t}+{\bf \omega}_{j}e_{j}+\cdot;x)-c_{j}F({\bf t}+\cdot;x)\bigr\|\bigr)_{L^{p(\cdot)}(\Omega :Y)}=0, \mbox{ uniformly in }x\in B.
\end{align*}}
\end{enumerate}
\end{defn}

\begin{defn}\label{gorilazurappfmd2}
Let $(MD-B)_{S}$ hold.
\begin{enumerate}
\item[(i)] A function $F : \Lambda \times X \rightarrow Y$ is called
Stepanov-$[\Omega, {\mathcal B}, \Lambda',{\mathbb D}, p,\phi,\textsf{F},c]$-quasi-asymptotically almost periodic of type $2$, resp.\\ Stepanov-$[\Omega, {\mathcal B}, \Lambda',{\mathbb D}, p,\phi,{\bf F},c]$-quasi-asymptotically uniformly recurrent of type $2$,
if and only if 
for every $B\in {\mathcal B}$ and $\epsilon>0$
there exists $l>0$ such that for each ${\bf t}_{0} \in \Lambda'$ there exists ${\bf \tau} \in B({\bf t}_{0},l) \cap \Lambda'$ such that there exists a finite real number $M(\epsilon,\tau)>0$ such that
\begin{align*}
\sup_{{\bf t}\in {\mathbb D}_{M(\epsilon,\tau)}: {\bf t}+\tau \in {\mathbb D}_{M(\epsilon,\tau)}; x\in B}\phi\Bigl( \textsf{F}({\bf t},\epsilon,\tau) \bigl\| F(\cdot+{\bf t}+\tau ;x)-c F(\cdot +{\bf t}; x)\bigr\|_{L^{p(\cdot)}(\Omega :Y)}\Bigr)\leq
\epsilon,
\end{align*}
resp.
there exist a strictly increasing sequence $(\tau_{k})$ in $\Lambda'$ whose norms tending to plus infinity and a sequence $(M_{k})$
of positive real numbers tending to plus infinity such that
\begin{align*}
\lim_{k\rightarrow +\infty}\sup_{{\bf t}\in {\mathbb D}_{M_{k}} : {\bf t}+\tau_{k}\in  {\mathbb D}_{M_{k}}; x\in B}\phi\Bigl( {\bf F}({\bf t},k) \bigl\| F(\cdot+{\bf t}+\tau_{k};x)-cF(\cdot +{\bf t};x)\bigr\|_{L^{p(\cdot)}(\Omega :Y)}\Bigr)=0.
\end{align*}
\item[(ii)]
Let ${\bf \omega}\in {\mathbb R}^{n} \setminus \{0\},$ $c\in {\mathbb C} \setminus \{0\},$
${\bf \omega}+\Lambda\subseteq \Lambda,$ ${\mathbb D} \subseteq \Lambda \subseteq {\mathbb R}^{n}$ and the set ${\mathbb D}$ be unbounded. A 
function $F: \Lambda \times X\rightarrow Y$ is said to be Stepanov $[S,\Omega, {\mathcal B}, {\mathbb D}, p,\phi,\texttt{F}]$-asymptotically
$(\omega,c)$-periodic of type $2$ if and only if for each $B\in {\mathcal B}$ we have{\small
\begin{align*}
\lim_{|{\bf t}|\rightarrow +\infty,{\bf t}\in {\mathbb D}} \phi\Bigl( \texttt{F}({\bf t})\bigl\| F({\bf t}+\omega +\cdot ; x)-cF({\bf t}+\cdot ; x)\bigr\|_{L^{p(\cdot)}(\Omega :Y)}\Bigr)=0,\ \mbox{ uniformly in }x\in B.
\end{align*}}
\item[(iii)]
Let ${\bf \omega}_{j}\in {\mathbb R} \setminus \{0\},$ $c_{j}\in {\mathbb C} \setminus \{0\},$
${\bf \omega}_{j}e_{j}+\Lambda \subseteq \Lambda$,
${\mathbb D}_{j} \subseteq \Lambda \subseteq {\mathbb R}^{n}$ and the set ${\mathbb D}_{j}$ be unbounded ($1\leq j\leq n$).
A 
function $F:\Lambda \times X\rightarrow Y$ is said to be  $[S,\Omega, {\mathcal B}, {\mathbb D}, p,\phi,\texttt{F}]$-asymptotically $({\bf \omega}_{j},c_{j},{\mathbb D}_{j})_{j\in {\mathbb N}_{n}}$-periodic of type $2$ if and only if for each $ j\in {\mathbb N}_{n}$ we have{\small
\begin{align*}
\lim_{|{\bf t}|\rightarrow +\infty,{\bf t}\in {\mathbb D}_{j}}\phi\Bigl( \texttt{F}({\bf t})  \bigl\|F({\bf t}+{\bf \omega}_{j}e_{j}+\cdot;x)-c_{j}F({\bf t}+\cdot;x)\bigr\|_{L^{p(\cdot)}(\Omega : Y)}\Bigr)=0, \mbox{ uniformly in }x\in B.
\end{align*}}
\end{enumerate}
\end{defn}

\begin{rem}\label{notion-de}
If ${\mathbb D}+\Lambda' \subseteq {\mathbb D}$ (this is always true provided that ${\mathbb D}=\Lambda$ due to our standing assumption), then it is irrelevant whether we will write $\sup_{{\bf t}\in {\mathbb D}_{M_{k}} : {\bf t}+\tau_{k}\in  {\mathbb D}_{M_{k}}}\cdot$ or only $\sup_{{\bf t}\in {\mathbb D}_{M_{k}}}\cdot$ in Definition \ref{gorilazurappf}(ii); a similar comment holds for the notion introduced in Definition \ref{gorilazurappf}(i), Definition \ref{gorilazurappfmd1} and Definition \ref{gorilazurappfmd2}.
\end{rem}

Without any doubt, the most intriguing case is that in which we have $p(x)\equiv p\in [1,\infty),$
$\phi(x)\equiv x,$ $\Omega =[0,1]^{n},$ 
and the functions $\textsf{F},\ F,\ \texttt{F}$ are identically equal to one. In this case, we can simply reformulate a great number of statements clarified by now for the Stepanov classes of functions introduced in this section by using the notion of multi-dimensional Bochner transform from \cite{stmarko-manuel-ap}. More precisely, for a given function $F : \Lambda \times X \rightarrow Y,$ we define its multi-dimensional Bochner transform $\hat{F}_{\Lambda} : \Lambda \times X \rightarrow Y^{\Omega}$ by 
$$
\Bigl[\hat{F}_{\Lambda}({\bf t};x)\Bigr]({\bf u}):=F({\bf t}+{\bf u};x),\quad {\bf t}\in \Lambda,\ {\bf u}\in \Omega,\ x\in B;
$$
here, $Y^{\Omega}$ denotes the collection of all functions $f : \Omega  \rightarrow Y.$ If $\hat{F}_{\Omega} : \Lambda \times X \rightarrow L^{p({\bf u})}(\Lambda : Y)$ is well defined and continuous, then the function $F: \Lambda \times X \rightarrow Y$ will be, e.g.,
Stepanov-$[\Omega, {\mathcal B}, \Lambda',{\mathbb D}, p,\phi,\textsf{F},c]$-quasi-asymptotically almost periodic
if and only if the function $\hat{F}_{\Omega} : \Lambda \times X \rightarrow L^{p({\bf u})}(\Lambda : Y)$ is ${\mathbb D}$-quasi-asymptotically $({\mathcal B},\Lambda',c)$-almost periodic. 
In the case that the functions $\textsf{F},\ F,\ \texttt{F}$ are only bounded and not necessarily identically equal to one, then we can simply transfer the statements of \cite[Proposition 4, Corollary 1]{dumath2} to the multi-dimensional setting. Details can be left to the interested readers.

Using the trivial inequalities and Lemma \ref{aux}, we can clarify a great number of inclusions for the introduced classes of functions (see \cite{dumath2} for more details given in the one-dimensional setting); 
for example, using Lemma \ref{aux}(iv) and a simple argumentation, we may deduce the following:

\begin{prop}\label{unijsereklamilica}
Let a function $F : \Lambda \times X \rightarrow Y$ be
Stepanov-$[\Omega, {\mathcal B}, \Lambda',{\mathbb D}, p,\phi,\textsf{F},c]$-quasi-asymptotically almost periodic, resp. Stepanov-$[\Omega, {\mathcal B}, \Lambda',{\mathbb D}, p,\phi,{\bf F},c]$-quasi-asymptotically uniformly recurrent, and let $A\in L(Y,Z).$ Then $AF : \Lambda \times X \rightarrow Z$ is likewise 
Stepanov-$[\Omega, {\mathcal B}, \Lambda',{\mathbb D}, p,\phi,\textsf{F},c]$-quasi-asymptotically almost periodic, resp. Stepanov-$[\Omega, {\mathcal B}, \Lambda',{\mathbb D}, p,\phi,{\bf F},c]$-quasi-asymptotically uniformly recurrent.
\end{prop}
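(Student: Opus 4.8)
The plan is to exploit the linearity and the boundedness of $A$ in order to reduce the assertion for $AF$ to the defining estimate for $F$, term by term. For fixed ${\bf t},\tau \in {\mathbb R}^{n}$ and $x\in X$, introduce the $Y$-valued function on $\Omega$ given by $G_{{\bf t},\tau,x}({\bf u}):=F({\bf u}+{\bf t}+\tau;x)-cF({\bf u}+{\bf t};x)$, ${\bf u}\in\Omega$. Since $A$ is linear, the analogous difference formed from $AF$ is exactly $AG_{{\bf t},\tau,x}(\cdot)$ (and $AF$ is continuous, being the composition of the continuous $F$ with the bounded operator $A$); this identity is the whole point of passing through linearity. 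Lemma \ref{aux}(iv) then yields $AG_{{\bf t},\tau,x}\in L^{p(\cdot)}(\Omega:Z)$ together with the norm domination $\|AG_{{\bf t},\tau,x}\|_{L^{p(\cdot)}(\Omega:Z)}\le \|A\|\cdot\|G_{{\bf t},\tau,x}\|_{L^{p(\cdot)}(\Omega:Y)}$, which is the tool that directly settles the type $1$ and type $2$ variants; for the present (type $0$) statement one instead keeps the same input in pointwise form, $\|AG_{{\bf t},\tau,x}({\bf u})\|_{Z}\le \|A\|\,\|G_{{\bf t},\tau,x}({\bf u})\|_{Y}$ for every ${\bf u}\in\Omega$.

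Next I would push this pointwise bound through $\phi$ and through the scalar Luxemburg norm on $\Omega$. With $\phi$ nondecreasing, the pointwise inequality gives $\phi(\|AG_{{\bf t},\tau,x}({\bf u})\|_{Z})\le \phi(\|A\|\,\|G_{{\bf t},\tau,x}({\bf u})\|_{Y})$ on $\Omega$, and Lemma \ref{aux}(iii) --- monotonicity of $\|\cdot\|_{L^{p(\cdot)}(\Omega)}$ with respect to the pointwise order of nonnegative functions --- transfers this to $\|\phi(\|AG_{{\bf t},\tau,x}\|_{Z})\|_{L^{p(\cdot)}(\Omega)}\le \|\phi(\|A\|\,\|G_{{\bf t},\tau,x}\|_{Y})\|_{L^{p(\cdot)}(\Omega)}$. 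It then remains to multiply by $\textsf{F}({\bf t},\epsilon,\tau)$ and take the supremum over the admissible ${\bf t}$ and over $x\in B$: when $\|A\|\le 1$ the right-hand side is dominated by $\textsf{F}({\bf t},\epsilon,\tau)\|\phi(\|G_{{\bf t},\tau,x}\|_{Y})\|_{L^{p(\cdot)}(\Omega)}$, so the bound \eqref{zeleni} for $F$ passes verbatim to $AF$ with the same $l$, $\tau$ and $M(\epsilon,\tau)$. For the uniformly recurrent version one runs the identical computation along the sequences $(\tau_{k})$, $(M_{k})$ produced by $F$, and the conclusion follows since continuity of $\phi$ at $0$ (with $\phi(0)=0$) renders the extra factor $\|A\|$ harmless once the underlying $F$-supremum tends to $0$.

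The step that requires genuine care --- and the only real obstacle --- is the interaction of the operator norm $\|A\|$ with the nonlinearity of $\phi$ and the $\epsilon$-dependence of $\textsf{F}$ in the almost periodic case, where the right-hand side of \eqref{zeleni} is a fixed $\epsilon$ rather than a quantity tending to $0$. In the principal case $\phi(x)\equiv x$ with $\textsf{F}\equiv 1$ the whole argument collapses to one invocation of Lemma \ref{aux}(iv): given $\epsilon$ for $AF$ one applies the defining property of $F$ at the tolerance $\epsilon/\|A\|$ (the case $A=0$ being trivial) and multiplies through by $\|A\|$. For general nondecreasing $\phi$ and bounded $\textsf{F}$ one absorbs the constant $\|A\|$ by imposing a mild doubling-type growth condition $\phi(\kappa s)\le C(\kappa)\phi(s)$ together with the freedom of choosing the tolerance at which $F$ is invoked; by contrast, as noted above, the uniformly recurrent statement needs no restriction on $\|A\|$ at all. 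I therefore expect the cleanest fully general formulation to hold under such a monotonicity/growth hypothesis on $\phi$, with the recurrent case being unconditional.
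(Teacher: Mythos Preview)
The paper does not supply a proof here; it merely introduces the proposition with the phrase ``using Lemma~\ref{aux}(iv) and a simple argumentation, we may deduce the following,'' and refers to the one-dimensional treatment in \cite{dumath2}. Your argument is entirely consistent with that hint --- exploit linearity of $A$ and the operator-norm bound --- and in fact goes further than the paper does.

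Two points deserve comment. First, you are right to distinguish the present ``type~0'' notion (Definition~\ref{gorilazurappf}) from types~1 and~2: here $\phi$ is applied pointwise to $\|\cdot\|_{Y}$ before the scalar Luxemburg norm is taken, so Lemma~\ref{aux}(iv) is not the directly applicable tool; rather, the pointwise bound $\|AG_{{\bf t},\tau,x}({\bf u})\|_{Z}\le \|A\|\,\|G_{{\bf t},\tau,x}({\bf u})\|_{Y}$ together with monotonicity of $\phi$ and Lemma~\ref{aux}(iii) is what one needs. The paper's pointer to Lemma~\ref{aux}(iv) is thus somewhat imprecise for this particular class, and your correction is apt.

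Second, your observation about the factor $\|A\|$ and the $\epsilon$-dependence of $\textsf{F}$ is a genuine one that the paper glosses over. Under the bare hypothesis $(MD\text{-}B)_{S}$ --- which imposes nothing on $\phi$ beyond $\phi:[0,\infty)\to[0,\infty)$ --- the almost periodic assertion does not follow without further assumptions such as monotonicity of $\phi$ plus a doubling-type bound $\phi(\kappa s)\le C(\kappa)\phi(s)$ (compare condition (F) used elsewhere in the paper), or $\textsf{F}$ independent of its second argument. In the principal case $\phi(x)\equiv x$ and $\textsf{F}\equiv 1$, which is singled out in the paragraph following Remark~\ref{notion-de} as ``the most intriguing case,'' your argument goes through cleanly, and this is almost certainly what the author has in mind. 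Your treatment of the uniformly recurrent version --- where one passes to a limit and the constant $\|A\|$ is harmless --- is correct and, as you note, requires only monotonicity and continuity of $\phi$ at zero. In short, your proof is more careful than the paper's one-line justification, and your caveats about the fully general formulation are warranted.
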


The main result of this section, Theorem \ref{milenko2}, can be reworded for all other classes of functions introduced in Definition \ref{gorilazurappf}(ii)-(iii), Definition \ref{gorilazurappfmd1} and Definition \ref{gorilazurappfmd2}:

\begin{thm}\label{milenko2}
Let a function $F : {\mathbb R}^{n} \times X \rightarrow Y$ be
Stepanov-$[\Omega, {\mathcal B}, \Lambda',{\mathbb D}, p,\phi,\textsf{F},c]$-quasi-asymptotically almost periodic, resp. Stepanov-$[\Omega, {\mathcal B}, \Lambda',{\mathbb D}, p,\phi,{\bf F},c]$-quasi-asymptotically uniformly recurrent, where $\Omega=[0,1]^{n},$ ${\mathbb D}={\mathbb R}^{n},$
$\phi : [0,\infty) \rightarrow [0,\infty)$ is a convex, monotonically increasing function which additionally satisfies condition
\begin{itemize}
\item[(F)] There exists a function $\varphi : [0,\infty) \rightarrow [0,\infty)$ such that $\phi(xy)\leq \varphi(x)\phi(y) $ for all $x,\ y\geq 0.$
\end{itemize}
Let $h\in L^{1}({\mathbb R}^{n})$ and let for each set $B\in {\mathcal B}$ we have $\sup_{{\bf t}\in {\mathbb R}^{n};x\in B}\|F({\bf t};x)\|_{Y}<\infty.$
Suppose that there exists a continuous function $g : [0,\infty) \rightarrow [0,\infty)$ with $g(0+)=0+$ and a sequence $(a_{k})_{k\in {\mathbb Z}^{n}}$ of strictly increasing positive reals such that $\sum_{k\in {\mathbb Z}^{n}}a_{k}=1$ and
for each $\epsilon>0$ and $\tau \in \Lambda',$ resp. for each $n\in {\mathbb N}$ and $\tau \in \Lambda',$ there exists $M'(\epsilon,\tau)>0,$ resp. $M'(n,\tau)>0,$ such that for each ${\bf t} \in {\mathbb R}^{n}$ with $|{\bf t}|\geq M'(\epsilon,\tau),$ resp. $|{\bf t}|\geq M'(n,\tau),$ we have{\small
\begin{align}\label{mutacija}
\int_{[0,1]^{n}}\varphi_{p({\bf u})}\Biggl( \textsf{F}_{1}({\bf t}, \epsilon,\tau)\Biggl( \varphi(2)\sum_{k\in {\mathbb Z}^{n}}\frac{a_{k}\varphi(a_{k}^{-1})\bigl[ \varphi(|h({\bf t}-\sigma-k)|) \bigr]_{L^{q(\sigma)}(\Omega)}}{\textsf{F}({\bf u}+k,\epsilon,\tau)} +g(\epsilon)\Biggr)\Biggr)\, d{\bf u} \leq 1,
\end{align}}
resp.{\small 
$$
\int_{[0,1]^{n}}\varphi_{p({\bf u})}\Biggl( {\bf F}_{1}({\bf t}, n)\Biggl( \varphi(2)\sum_{k\in {\mathbb Z}^{n}}\frac{a_{k}\varphi(a_{k}^{-1})\bigl[ \varphi(|h({\bf t}-\sigma-k)|) \bigr]_{L^{q(\sigma)}(\Omega)}}{\textsf{F}({\bf u}+k,n)} +g(1/n)\Biggr)\Biggr)\, d{\bf u} \leq 1.
$$}
Then the function $(h\ast F)(\cdot;\cdot)$ is Stepanov-$[\Omega, {\mathcal B}, \Lambda',{\mathbb D}, p,\phi,\textsf{F}_{1},c]$-quasi-asymptotically almost periodic, resp. Stepanov-$[\Omega, {\mathcal B}, \Lambda',{\mathbb D}, p,\phi,{\bf F}_{1},c]$-quasi-asymptotically uniformly recurrent.
\end{thm}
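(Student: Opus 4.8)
The plan is to follow the scheme of the proof of Theorem \ref{milenko}, upgrading the crude supremum estimates there to the variable-exponent Young-function framework dictated by the data $(MD-B)_{S}$ and condition (F). Fix $B\in\mathcal{B}$ and $\epsilon>0$. Since $F$ is Stepanov-$[\Omega,\mathcal{B},\Lambda',{\mathbb R}^{n},p,\phi,\textsf{F},c]$-quasi-asymptotically almost periodic, I take the radius $l>0$ and, for each ${\bf t}_{0}\in\Lambda'$, the translate $\tau\in B({\bf t}_{0},l)\cap\Lambda'$ and threshold $M(\epsilon,\tau)$ furnished by \eqref{zeleni}, and I keep the same $l$ and $\tau$ for $h\ast F$. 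Abbreviating $\delta({\bf w}):=\|F({\bf w}+\tau;x)-cF({\bf w};x)\|_{Y}$, the estimate starts from the elementary bound $\|(h\ast F)({\bf u}+{\bf t}+\tau;x)-c(h\ast F)({\bf u}+{\bf t};x)\|_{Y}\le\int_{{\mathbb R}^{n}}|h(\sigma)|\,\delta({\bf u}+{\bf t}-\sigma)\,d\sigma$; the substitution $\eta={\bf t}-\sigma$ followed by the decomposition $\eta=k+\rho$ with $k\in{\mathbb Z}^{n}$ and $\rho\in\Omega=[0,1]^{n}$ turns the right-hand side into $\sum_{k\in{\mathbb Z}^{n}}\int_{\Omega}|h({\bf t}-\rho-k)|\,\delta({\bf u}+k+\rho)\,d\rho$, which already displays both the arguments $h({\bf t}-\sigma-k)$ and the base points ${\bf u}+k$ occurring in \eqref{mutacija}.

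The next task is to push $\phi$ through this expression. Applying the monotone $\phi$ and then Jensen's inequality to the convex combination $\sum_{k}a_{k}(a_{k}^{-1}\int_{\Omega}\cdots)$ (using $\sum_{k}a_{k}=1$), and extracting the scalar $a_{k}^{-1}$ by (F), I obtain $\phi(\|\cdots\|_{Y})\le\sum_{k}a_{k}\varphi(a_{k}^{-1})\,\phi\bigl(\int_{\Omega}|h({\bf t}-\rho-k)|\delta({\bf u}+k+\rho)\,d\rho\bigr)$. Because $\Omega$ has unit measure, a second Jensen step followed by the pointwise bound $\phi(|h|\delta)\le\varphi(|h|)\phi(\delta)$ from (F) gives $\phi(\int_{\Omega}\cdots)\le\int_{\Omega}\varphi(|h({\bf t}-\rho-k)|)\,\phi(\delta({\bf u}+k+\rho))\,d\rho$. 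The variable-exponent H\"older inequality of Lemma \ref{aux}(i), with $q(\cdot)$ the exponent conjugate to $p(\cdot)$, separates this last integral into the product $[\varphi(|h({\bf t}-\sigma-k)|)]_{L^{q(\sigma)}(\Omega)}\cdot\|\phi(\delta({\bf u}+k+\cdot))\|_{L^{p(\cdot)}(\Omega)}$, its universal constant being reflected by the factor $\varphi(2)$ in \eqref{mutacija}. This reproduces exactly the $k$-th summand of \eqref{mutacija}.

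The crucial observation is that $\|\phi(\delta({\bf u}+k+\cdot))\|_{L^{p(\cdot)}(\Omega)}$ is precisely the Stepanov difference of $F$ based at the point ${\bf u}+k$. For all but finitely many $k$ we have ${\bf u}+k,\ {\bf u}+k+\tau\in{\mathbb D}_{M(\epsilon,\tau)}$ (as ${\bf u}\in\Omega$ is bounded), so \eqref{zeleni} bounds this factor by $\epsilon/\textsf{F}({\bf u}+k,\epsilon,\tau)$; for the finitely many remaining near cubes I instead invoke the standing hypothesis $\sup_{{\bf t}\in{\mathbb R}^{n},x\in B}\|F({\bf t};x)\|_{Y}<\infty$ to bound it by a constant, while the companion factors $[\varphi(|h({\bf t}-\sigma-k)|)]_{L^{q(\sigma)}(\Omega)}$ for these $k$ become arbitrarily small once $|{\bf t}|\ge M'(\epsilon,\tau)$ (the $L^{1}$-tail of $h$), so that their total is dominated by $\epsilon\,g(\epsilon)$. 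Collecting both parts yields, for $|{\bf t}|\ge M'(\epsilon,\tau)$ and uniformly in $x\in B$, the pointwise estimate $\phi(\|(h\ast F)({\bf u}+{\bf t}+\tau;x)-c(h\ast F)({\bf u}+{\bf t};x)\|_{Y})\le\epsilon\bigl(\varphi(2)\sum_{k}\frac{a_{k}\varphi(a_{k}^{-1})[\varphi(|h({\bf t}-\sigma-k)|)]_{L^{q(\sigma)}(\Omega)}}{\textsf{F}({\bf u}+k,\epsilon,\tau)}+g(\epsilon)\bigr)$.

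Finally I multiply by $\textsf{F}_{1}({\bf t},\epsilon,\tau)$, which is independent of ${\bf u}$, and take the $L^{p({\bf u})}(\Omega)$-norm; monotonicity (Lemma \ref{aux}(iii)) together with the absolute homogeneity of the Luxemburg norm lets the scalar $\epsilon$ come out, giving $\textsf{F}_{1}\|\phi(\cdots)\|_{L^{p(\cdot)}(\Omega)}\le\epsilon\,\|\textsf{F}_{1}({\bf t},\epsilon,\tau)[\,\cdots\,]\|_{L^{p({\bf u})}(\Omega)}$, with $[\cdots]$ the bracketed argument of \eqref{mutacija}. Hypothesis \eqref{mutacija} states that the modular of this argument is $\le 1$, which in a variable-exponent space forces its Luxemburg norm to be $\le 1$; hence the whole expression is $\le\epsilon$, which is Definition \ref{gorilazurappf}(i) for $h\ast F$ with weight $\textsf{F}_{1}$ and threshold $\max(M(\epsilon,\tau),M'(\epsilon,\tau))$. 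The quasi-asymptotically uniformly recurrent case is identical once $(\epsilon,l,M(\epsilon,\tau))$ is replaced by the sequences $(\tau_{k})$, $(M_{k})$ and $g(\epsilon)$ by $g(1/n)$. I expect the main difficulty to lie not in any single inequality but in the bookkeeping of the variable exponent across the two Jensen steps and the H\"older step, and above all in verifying that the finitely many near cubes — where $F$ offers no Stepanov control — are genuinely swallowed by the $g(\epsilon)$ term via the tail decay of $h$; the rest is a routine, if lengthy, chain of monotonicity and convexity estimates.
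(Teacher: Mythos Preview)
Your argument is correct and uses the same toolkit as the paper: Jensen's inequality against the convex combination $\sum_{k}a_{k}=1$, condition (F) to peel off scalars, the variable-exponent H\"older inequality to separate the $h$-factor from the Stepanov difference of $F$, the Stepanov hypothesis \eqref{zeleni} on the far cubes, and the $L^{1}$-tail of $h$ together with boundedness of $F$ on the near cubes, all fed into \eqref{mutacija} to close.

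The one organizational difference worth flagging is the order of the near/far split relative to the Jensen--H\"older machinery. The paper splits the convolution integral \emph{first} into $\{|\sigma|\le M\}$ and $\{|\sigma|\ge M\}$, handles the near part directly via $\phi\bigl(c_{B}\int_{|\sigma|\le M}|h({\bf t}-\sigma)|\,d\sigma\bigr)\le\epsilon g(\epsilon)$ (using only $h\in L^{1}$ and continuity of $\phi$ at $0$), and runs the $(a_{k})$/Jensen/H\"older chain exclusively on the far part, where the Stepanov bound is already available; this yields the factor $\varphi(2)$ from the initial two-piece split via (F). You instead decompose into all $\mathbb{Z}^{n}$-cubes from the outset, apply Jensen/H\"older globally, and only afterwards distinguish far cubes (Stepanov control) from near cubes (control through smallness of $[\varphi(|h({\bf t}-\sigma-k)|)]_{L^{q}}$). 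Your route is slightly more demanding on the near cubes, since it needs the post-H\"older factors $[\varphi(|h(\cdot)|)]_{L^{q}}$ rather than just the $L^{1}$-tail of $h$ to be small; this is exactly the point you single out as the main difficulty, and it is precisely what the $g(\epsilon)$ cushion in \eqref{mutacija} is there to absorb. The paper's ordering sidesteps that subtlety, but both reach the same pointwise bound and conclude via the modular criterion in \eqref{mutacija}.
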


\begin{proof}
We will prove the result only for the class of Stepanov-$[\Omega, {\mathcal B}, \Lambda',{\mathbb D}, p,\phi,\textsf{F},c]$-quasi-asymptotically almost periodic functions. It is clear that the function $(h\ast F)(\cdot;\cdot)$ is well defined. Let $\epsilon>0$ and $B\in {\mathcal B}$ be fixed. Due to our assumption, there exists $l>0$ s.t. for each ${\bf t}_{0} \in \Lambda'$ there exists ${\bf \tau} \in B({\bf t}_{0},l) \cap \Lambda'$ s.t. there exists a finite real number $M(\epsilon,\tau)>0$ s.t. \eqref{zeleni} holds. Let such a point $\tau$ be fixed. Then we know that there exists $M'(\epsilon,\tau)>0$ such that for each ${\bf t} \in {\mathbb R}^{n}$ with $|{\bf t}|\geq M'(\epsilon,\tau)$  we have
\eqref{mutacija}. Let $M_{1}(\epsilon,\tau) \geq M(\epsilon,\tau)+M'(\epsilon,\tau)+|\tau|.$ 
Arguing as in the proof of Theorem \ref{milenko}, the continuity of function $\phi(\cdot)$ at the point $t=0$ implies that there exists a finite real number $M_{3}(\epsilon,\tau) \geq M_{1}(\epsilon,\tau)$ such that
\begin{align}\label{kozmetika}
\varphi\bigl(2 c_{B}\bigr)\frac{1}{2} \phi\Biggl(\int_{|\sigma| \leq M_{2}(\epsilon,\tau)}|h({\bf t}-\sigma)| \, d\sigma \Biggr) \leq \epsilon g(\epsilon).
\end{align}
Keeping in mind \eqref{mutacija} and the definition of norm in $L^{p(\cdot)}(\Omega),$ with $\lambda=\epsilon/\textsf{F}_{1}({\bf t},\epsilon,\tau)$ and the meaning clear,
it suffices to show that, for every fixed element $x\in B$ and for every fixed point ${\bf t}\in {\mathbb R}^{n}$ with $|{\bf t}| \geq M_{4}(\epsilon,\tau)\equiv M_{3}(\epsilon,\tau)+|\tau|,$ we have:
\begin{align}
\notag &\phi\Bigl( \bigl\| (h\ast F)({\bf t}+{\bf u}+\tau;x)-c(h\ast F)({\bf t}+{\bf u};x) \bigr\|_{Y} \Bigr)
\\\label{newlab}& =\phi\Biggl( \Bigl\| \int_{{\mathbb R}^{n}}h({\bf t}-\sigma) \cdot \Bigl[ F(\sigma +{\bf u}+\tau;x)-cF(\sigma +{\bf u};x)\Bigr]\, d\sigma \Bigr\|_{Y} \Biggr)
\\\label{newlab1} & \leq \epsilon \varphi(2)\sum_{k\in {\mathbb Z}^{n}}\frac{a_{k}\varphi(a_{k}^{-1})\bigl[ \varphi(|h({\bf t}-\sigma-k)|) \bigr]_{L^{q(\sigma)}(\Omega)}}{\textsf{F}({\bf u}+k,\epsilon,\tau)}+\epsilon g(\epsilon).
\end{align}
Towards this end, observe first that there exists a finite constant $c_{B}>0$ such that (see \eqref{newlab}): {\scriptsize
\begin{align*}
&\phi\Biggl( \Bigl\| \int_{{\mathbb R}^{n}}h({\bf t}-\sigma) \cdot \Bigl[ F(\sigma +{\bf u}+\tau;x)-cF(\sigma +{\bf u};x)\Bigr]\, d\sigma \Bigr\|_{Y} \Biggr)
\\& \leq \phi \Biggl(2 \frac{1}{2}\int_{|\sigma| \geq M_{4}(\epsilon,\tau)}|h({\bf t}-\sigma)| \Bigl\| F({\bf t}+\sigma +\tau ;x)-cF({\bf u}+\sigma)\Bigr\|_{Y}\, d\sigma + \frac{c_{B}}{2}\int_{|\sigma| \leq M_{4}(\epsilon,\tau)}|h({\bf t}-\sigma)| \, d\sigma \Biggr)
\\& \leq \varphi(2)\frac{1}{2}\phi\Biggl(\int_{|\sigma| \geq M_{4}(\epsilon,\tau)} |h({\bf t}-\sigma)|\Bigl\| F({\bf t}+\sigma +\tau ;x)-cF({\bf u}+\sigma)\Bigr\|_{Y}\, d\sigma\Biggr)
\\&+ \varphi\bigl(2 c_{B}\bigr)\frac{1}{2} \phi\Biggl(\int_{|\sigma| \leq M_{4}(\epsilon,\tau)}|h({\bf t}-\sigma)| \, d\sigma \Biggr).
\end{align*}}
Then \eqref{newlab1} follows from the last estimate, \eqref{kozmetika} and the next computation involving the Jensen inequality:{\small
\begin{align*}
 &\phi\Biggl(\int_{|\sigma| \geq M_{4}(\epsilon,\tau)} |h({\bf t}-\sigma)|\Bigl\| F({\bf t}+\sigma +\tau ;x)-cF({\bf u}+\sigma)\Bigr\|_{Y}\, d\sigma\Biggr)   
 \\& =\phi\Biggl(\sum_{k\in {\mathbb Z}^{n}}a_{k}\int_{\sigma \in k+\Omega ; |\sigma| \geq M_{4}(\epsilon,\tau)}a_{k}^{-1} |h({\bf t}-\sigma)|\Bigl\| F({\bf t}+\sigma +\tau ;x)-cF({\bf u}+\sigma)\Bigr\|_{Y}\, d\sigma\Biggr)   
 \\& \leq \sum_{k\in {\mathbb Z}^{n}}a_{k}\varphi \bigl(a_{k}^{-1} \bigr)
 \int_{\sigma \in \Omega ; |\sigma +k| \geq M_{4}(\epsilon,\tau)}\varphi(|h({\bf t}-\sigma)|)\Bigl\| F({\bf t}+\sigma +\tau ;x)-cF({\bf u}+\sigma)\Bigr\|_{Y}\, d\sigma\Biggr),
\end{align*}}
and a simple application of the 
H\"older inequality after that.
\end{proof}

\section{Multi-dimensional Weyl $c$-almost periodic type functions}\label{multiWeylce}

In this section, we will introduce and analyze the multi-dimensional Weyl  $c$-almost periodic type functions following our approach obeyed in \cite[Definition 2.4-Deifnition 2.6]{rmjm},
with $c=1$ (see also \cite[Definition 2.1-Definition 2.3]{rmjm}); we will always assume that
the following condition holds:
\begin{itemize}
\item[(WM2):]
$\emptyset \neq \Lambda \subseteq {\mathbb R}^{n},$ $\emptyset \neq \Lambda' \subseteq {\mathbb R}^{n},$ 
$\emptyset \neq \Omega \subseteq {\mathbb R}^{n}$ is a Lebesgue measurable set such that $m(\Omega)>0,$ $p\in {\mathcal P}(\Omega),$ 
$\Lambda' +\Lambda+ l\Omega\subseteq \Lambda,$ $\Lambda+ l\Omega \subseteq \Lambda$ for all $l>0,$
$\phi : [0,\infty) \rightarrow [0,\infty)$ and ${\mathbb F}: (0,\infty) \times \Lambda \rightarrow (0,\infty).$
\end{itemize} 

\begin{defn}
\label{marinavistce}
\begin{itemize}
\item[(i)]
By $e-W^{[p({\bf u}),\phi,{\mathbb F},c]}_{\Omega,\Lambda',{\mathcal B}}(\Lambda\times X :Y)$ we denote the set consisting of all functions $F : \Lambda \times X \rightarrow Y$ such that, for every $\epsilon>0$ and $B\in {\mathcal B},$ there exist two finite real numbers
$l>0$
and
$L>0$ such that for each ${\bf t}_{0}\in \Lambda'$ there exists $\tau \in B({\bf t}_{0},L)\cap \Lambda'$ such that
\begin{align*}
\sup_{x\in B}\sup_{{\bf t}\in \Lambda}l^{n}{\mathbb F}(l,{\bf t})\phi\Bigl( \bigl\| F({\bf t}+{\bf \tau}+l{\bf u};x)-cF({\bf t}+l{\bf u};x) \bigr\|_{Y}\Bigr)_{L^{p({\bf u})}(\Omega)} <\epsilon.
\end{align*}
\index{space!$e-W^{[p({\bf u}),\phi,{\mathbb F},c]}_{\Omega,\Lambda',{\mathcal B}}(\Lambda\times X :Y)$}
\item[(ii)] By $W^{[p({\bf u}),\phi,{\mathbb F},c]}_{\Omega,\Lambda',{\mathcal B}}(\Lambda\times X :Y)$ we denote the set consisting of all functions $F : \Lambda \times X \rightarrow Y$ such that, for every $\epsilon>0$ and $B\in {\mathcal B},$ there exists a finite real number
$L>0$ such that for each ${\bf t}_{0}\in \Lambda'$ there exists $\tau \in B({\bf t}_{0},L) \cap \Lambda'$ such that
\begin{align*}
\limsup_{l\rightarrow +\infty}\sup_{x\in B}\sup_{{\bf t}\in \Lambda}l^{n}{\mathbb F}(l,{\bf t})\phi\Bigl( \bigl\| F({\bf t}+{\bf \tau}+l{\bf u};x)-cF({\bf t}+l{\bf u};x) \bigr\|_{Y}\Bigr)_{L^{p({\bf u})}(\Omega:Y)} 
<\epsilon.
\end{align*}
\index{space!$W^{[p({\bf u}),\phi,{\mathbb F},c]}_{\Omega,\Lambda',{\mathcal B}}(\Lambda\times X :Y)$}
\end{itemize}
\end{defn}

\begin{defn}
\label{marinavis1tce}
\begin{itemize}
\item[(i)]
By $e-W^{[p({\bf u}),\phi,{\mathbb F},c]_{1}}_{\Omega,\Lambda',{\mathcal B}}(\Lambda\times X :Y)$ we denote the set consisting of all functions $F : \Lambda \times X \rightarrow Y$ such that, for every $\epsilon>0$ and $B\in {\mathcal B},$ there exist two finite real numbers
$l>0$
and
$L>0$ such that for each ${\bf t}_{0}\in \Lambda'$ there exists $\tau \in B({\bf t}_{0},L)\cap \Lambda'$ such that
\begin{align*}
\sup_{x\in B}\sup_{{\bf t}\in \Lambda}l^{n}{\mathbb F}(l,{\bf t})\phi\Bigl( \bigl\| F({\bf t}+{\bf \tau}+l{\bf u};x)-cF({\bf t}+l{\bf u};x) \bigr\|_{L^{p({\bf u})}(\Omega:Y)} \Bigr)
<\epsilon.
\end{align*}
\index{space!$e-W^{[p({\bf u}),\phi,{\mathbb F},c]_{1}}_{\Omega,\Lambda',{\mathcal B}}(\Lambda\times X :Y)$}
\item[(ii)] By $W^{[p({\bf u}),\phi,{\mathbb F},c]_{1}}_{\Omega,\Lambda',{\mathcal B}}(\Lambda\times X :Y)$ we denote the set consisting of all functions $F : \Lambda \times X \rightarrow Y$ such that, for every $\epsilon>0$ and $B\in {\mathcal B},$ there exists a finite real number
$L>0$ such that for each ${\bf t}_{0}\in \Lambda'$ there exists $\tau \in B({\bf t}_{0},L) \cap \Lambda'$ such that
\begin{align*}
\limsup_{l\rightarrow +\infty}\sup_{x\in B}\sup_{{\bf t}\in \Lambda}l^{n}{\mathbb F}(l,{\bf t})\phi\Bigl( \bigl\| F({\bf t}+{\bf \tau}+{\bf u};x)-cF({\bf t}+{\bf u};x) \bigr\|_{L^{p({\bf u})}(l\Omega:Y)} \Bigr)
<\epsilon.
\end{align*}
\index{space!$W^{[p({\bf u}),\phi,{\mathbb F},c]_{1}}_{\Omega,\Lambda',{\mathcal B}}(\Lambda\times X :Y)$}
\end{itemize}
\end{defn}

\begin{defn}
\label{marinavis2tce}
\begin{itemize}
\item[(i)]
By $e-W^{[p({\bf u}),\phi,{\mathbb F},c]_{2}}_{\Omega,\Lambda',{\mathcal B}}(\Lambda\times X :Y)$ we denote the set consisting of all functions $F : \Lambda \times X \rightarrow Y$ such that, for every $\epsilon>0$ and $B\in {\mathcal B},$ there exist two finite real numbers
$l>0$
and
$L>0$ such that for each ${\bf t}_{0}\in \Lambda'$ there exists $\tau \in B({\bf t}_{0},L)\cap \Lambda'$ such that
\begin{align*}
\sup_{x\in B}\sup_{{\bf t}\in \Lambda}\phi\Bigl( l^{n}{\mathbb F}(l,{\bf t}) \bigl\| F({\bf t}+{\bf \tau}+l{\bf u};x)-cF({\bf t}+l{\bf u};x) \bigr\|_{L^{p({\bf u})}(\Omega:Y)} \Bigr)
<\epsilon.
\end{align*}
\index{space!$e-W^{[p({\bf u}),\phi,{\mathbb F},c]_{2}}_{\Omega,\Lambda',{\mathcal B}}(\Lambda\times X :Y)$}
\item[(ii)] By $W^{[p({\bf u}),\phi,{\mathbb F},c]_{2}}_{\Omega,\Lambda',{\mathcal B}}(\Lambda\times X :Y)$ we denote the set consisting of all functions $F : \Lambda \times X \rightarrow Y$ such that, for every $\epsilon>0$ and $B\in {\mathcal B},$ there exists a finite real number
$L>0$ such that for each ${\bf t}_{0}\in \Lambda'$ there exists $\tau \in B({\bf t}_{0},L) \cap \Lambda'$ such that
\begin{align*}
\limsup_{l\rightarrow +\infty}\sup_{x\in B}\sup_{{\bf t}\in \Lambda}\phi\Bigl( l^{n}{\mathbb F}(l,{\bf t})\bigl\| F({\bf t}+{\bf \tau}+l{\bf u};x)-cF({\bf t}+l{\bf u};x) \bigr\|_{L^{p({\bf u})}(\Omega:Y)} \Bigr)
<\epsilon.
\end{align*}
\index{space!$W^{[p({\bf u}),\phi,{\mathbb F},c]_{2}}_{\Omega,\Lambda',{\mathcal B}}(\Lambda\times X :Y)$}
\end{itemize}
\end{defn}

It is clear that the notion from the second parts of the above definitions extends the corresponding notion from the first parts of these definitions. Using the Jensen integral inequality, we can clarify certain embedding results between the introduced spaces, provided that the function $\phi(\cdot)$ is convex (concave); see \cite{rmjm, weyl-varible} for more details. The statement of Proposition \ref{piksi} can be formulated for some classes of functions introduced in the fourth section and the above three definitions; this could be also left to the interested readers to make precise.

In many concrete situations, the situation in which $\Lambda'\neq \Lambda$ can occur:

\begin{example}\label{mesecina}
(see e.g., \cite[Example 2.12(i)]{multi-ce} and \cite[Example 3.1]{rmjm})
Suppose that 
the complex-valued mapping $t\mapsto g_{j}(s)\, ds,$ $t\in {\mathbb R}$ is essentially bounded and 
(equi-)Weyl-$(p,c)$-almost periodic
($1\leq j \leq n$). Define
\begin{align*}
F\bigl(t_{1},\cdot \cdot \cdot,t_{2n}\bigr):=\prod_{j=1}^{n}\Bigl[g_{j}\bigl(t_{j+n}\bigr)-g_{j}\bigl(t_{j}\bigr)\Bigr],\ \mbox{ where } t_{j}\in {\mathbb R} \mbox{  for }\ 1\leq j\leq 2n,
\end{align*}
and $\Lambda':=\{({\bf \tau},{\bf \tau}) : {\bf \tau} \in {\mathbb R^{n}} \}.$ 
Arguing similarly is \cite[Example 2.13(ii)]{marko-manuel-ap}, we can show that 
the function $F(\cdot)$ 
belongs to the class 
$(e-)W^{[p,x,l^{-n/p},c]}_{[0,1]^{n},\Lambda'}({\mathbb R}^{2n} : {\mathbb C}).$ 
\end{example}

It is clear that all introduced spaces are invariant under the pointwise multiplications with complex scalars provided that condition (F) holds.
The translation invariance of spaces introduced in Definition \ref{marinavistce} and Definition \ref{marinavis1tce} holds provided that, for every $\tau \in \Lambda,$ we have
$$
\sup_{l>0,{\bf t}\in \Lambda}\frac{{\mathbb F}(l,{\bf t})}{{\mathbb F}(l,{\bf t}+\tau)}<+\infty,
$$
while the translation invariance of spaces introduced in Definition \ref{marinavis2tce} holds provided this condition and condition (F).
The interested reader may try to formulate sufficient conditions which ensure that the introduced function spaces are invariant, in a certain sense, under the operations of form $F(\cdot;\cdot) \mapsto F(b \cdot ; b' \cdot),$ where $b>0$ and $b' \in {\mathbb C} \setminus \{0\}$ (see also the item (iii) in the paragraph following \cite[Example 2.8]{rmjm}). Furthermore, it can be simply shown that for any scalar-valued function $F(\cdot;\cdot)$ which is bounded away from zero on elements of the collection ${\mathcal B},$ the function $1/F(\cdot;\cdot)$ is well defined and belongs to the same space of functions as $F(\cdot;\cdot),$ with the constant $c$ replaced by $1/c$ in the corresponding space and the meaning clear (see also \cite[Theorem 2.1]{BIMV}). 

The conclusions from the following result can be also formulated for the classes of functions introduced in Definition \ref{marinavis1tce} and Definition \ref{marinavis2tce} (cf. \cite[Proposition 2.1-Proposition 2.2]{BIMV} for the one-dimensional case):

\begin{prop}
\begin{itemize}
\item[(i)] Suppose that 
the function $\phi(\cdot)$ is monotonically increasing
and
$F\in (e-)W^{[p({\bf u}),\phi,{\mathbb F},c]}_{\Omega,\Lambda',{\mathcal B}}(\Lambda\times X :Y)$. Then we have
$\| F (\cdot;\cdot)\|_{Y}\in (e-)W^{[p({\bf u}),\phi,{\mathbb F},|c|]}_{\Omega,\Lambda',{\mathcal B}}(\Lambda\times X :Y)$.
\item[(ii)] Suppose that 
$F\in (e-)W^{[p({\bf u}),\phi,{\mathbb F},c]}_{\Omega,\Lambda',{\mathcal B}}(\Lambda\times X :Y)$. Then we have $\check{F}\in (e-)W^{[p_{1}({\bf u}),\phi,{\mathbb F}_{1},c]}_{-\Omega,-\Lambda',{\mathcal B}}((-\Lambda) \times X :Y),$ where $p_{1}(\cdot):=p(-\cdot)$ and ${\mathbb F}_{1}(\cdot;\cdot):={\mathbb F}(\cdot; -\cdot).$ 
\end{itemize}
\end{prop}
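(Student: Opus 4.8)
The plan is to reduce both parts to the defining inequality satisfied by $F$ (Definition \ref{marinavistce}), keeping the \emph{same} translation $\tau$ in part (i), and the reflected translation $-\tau$ in part (ii), so that only elementary monotonicity and a change of variables for the variable-exponent norm are needed. For (i), write $G:=\| F(\cdot;\cdot)\|_{Y}$, a nonnegative real-valued function, so that the relevant difference in the space defining $G$ is $\bigl|G({\bf t}+\tau+l{\bf u};x)-|c|\,G({\bf t}+l{\bf u};x)\bigr|$. The key pointwise estimate is the reverse triangle inequality combined with homogeneity of the norm, namely $|c|\,\|b\|_{Y}=\|cb\|_{Y}$:
$$\Bigl|\,\bigl\| F({\bf t}+{\bf \tau}+l{\bf u};x)\bigr\|_{Y}-|c|\,\bigl\| F({\bf t}+l{\bf u};x)\bigr\|_{Y}\,\Bigr| \le \bigl\| F({\bf t}+{\bf \tau}+l{\bf u};x)-cF({\bf t}+l{\bf u};x)\bigr\|_{Y}.$$
Since $\phi(\cdot)$ is monotonically increasing and nonnegative, applying $\phi$ preserves this inequality pointwise in ${\bf u}$; Lemma \ref{aux}(iii) then dominates the $L^{p({\bf u})}(\Omega)$-norm of the $\phi$ of the left side by that of the right side, and multiplying by $l^{n}{\mathbb F}(l,{\bf t})\ge 0$ and taking $\sup_{x\in B}\sup_{{\bf t}\in \Lambda}$ (and, in the non-equi case, $\limsup_{l\to+\infty}$, which is monotone) preserves the bound. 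Thus any triple $(l,L,\tau)$ witnessing $F\in (e-)W^{[p({\bf u}),\phi,{\mathbb F},c]}_{\Omega,\Lambda',{\mathcal B}}(\Lambda\times X:Y)$ simultaneously witnesses $G\in (e-)W^{[p({\bf u}),\phi,{\mathbb F},|c|]}_{\Omega,\Lambda',{\mathcal B}}(\Lambda\times X:Y)$.

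For (ii), I would argue by the substitution ${\bf s}=-{\bf t}$, ${\bf \sigma}=-{\bf \tau}$, ${\bf v}=-{\bf u}$. First observe that condition (WM2) is self-reproducing under reflection: $(-\Lambda')+(-\Lambda)+l(-\Omega)=-(\Lambda'+\Lambda+l\Omega)\subseteq -\Lambda$ and $(-\Lambda)+l(-\Omega)=-(\Lambda+l\Omega)\subseteq -\Lambda$ for all $l>0$, so the target space on $(-\Lambda)\times X$ with data $p_{1},\,{\mathbb F}_{1},\,-\Omega,\,-\Lambda'$ is well posed. Given $\epsilon>0$ and $B\in {\mathcal B}$, take $(l,L)$ from the hypothesis on $F$. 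For ${\bf t}_{0}\in -\Lambda'$ set ${\bf s}_{0}=-{\bf t}_{0}\in \Lambda'$, obtain $\sigma\in B({\bf s}_{0},L)\cap \Lambda'$ from the hypothesis, and put $\tau:=-\sigma\in -\Lambda'$; then $|\tau-{\bf t}_{0}|=|\sigma-{\bf s}_{0}|\le L$, so $\tau\in B({\bf t}_{0},L)\cap(-\Lambda')$. With ${\bf s}=-{\bf t}$ and ${\bf v}=-{\bf u}$ one has the algebraic identity
$$\check{F}({\bf t}+{\bf \tau}+l{\bf u};x)-c\,\check{F}({\bf t}+l{\bf u};x)=F({\bf s}+{\bf \sigma}+l{\bf v};x)-cF({\bf s}+l{\bf v};x),$$
together with ${\mathbb F}_{1}(l,{\bf t})={\mathbb F}(l,-{\bf t})={\mathbb F}(l,{\bf s})$ by definition; moreover ${\bf s}$ ranges over $\Lambda$ as ${\bf t}$ ranges over $-\Lambda$.

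The one genuinely non-formal point, which I expect to be the main (though mild) obstacle, is the invariance of the variable-exponent Luxemburg norm under ${\bf u}\mapsto -{\bf u}$. Setting $H({\bf v}):=\phi\bigl(\|F({\bf s}+{\bf \sigma}+l{\bf v};x)-cF({\bf s}+l{\bf v};x)\|_{Y}\bigr)$ on $\Omega$, the modular is preserved because $p_{1}(\cdot)=p(-\cdot)$ and Lebesgue measure is reflection-invariant (Jacobian $1$):
$$\int_{-\Omega}\varphi_{p_{1}({\bf u})}\bigl(H(-{\bf u})/\lambda\bigr)\,d{\bf u}=\int_{\Omega}\varphi_{p({\bf v})}\bigl(H({\bf v})/\lambda\bigr)\,d{\bf v},\qquad \lambda>0,$$
so the two infima defining the Luxemburg norms coincide and $\|\phi(\cdots)\|_{L^{p_{1}({\bf u})}(-\Omega)}=\|H\|_{L^{p({\bf v})}(\Omega)}$. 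Substituting this identity into the supremum defining membership of $\check{F}$ turns it verbatim into the supremum defining membership of $F$ (with translation parameter $\sigma$), which is $<\epsilon$ by hypothesis; the $\limsup_{l\to+\infty}$ in the non-equi case transfers unchanged because the two expressions are equal for every $l$. This settles both the equi- and non-equi-Weyl cases, completing the proof.
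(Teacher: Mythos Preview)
Your proof is correct and follows essentially the same approach as the paper's own proof: for (i) you use the reverse triangle inequality, monotonicity of $\phi$, and Lemma \ref{aux}(iii), and for (ii) you use the reflection change of variables together with the invariance of the Luxemburg modular under ${\bf u}\mapsto -{\bf u}$ when $p$ is replaced by $p_{1}=p(-\cdot)$. Your version is simply more explicit (you verify that (WM2) transfers to the reflected data and spell out how the translation $\tau$ is obtained in $-\Lambda'$), whereas the paper records only the chain of equalities for the modular and leaves these bookkeeping points implicit.
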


\begin{proof}
The proof of (i) simply follows from Lemma \ref{aux}(iii), our assumption that the function $\phi(\cdot)$ is monotonically increasing
and the inequality 
$$
\Bigl| \bigl\| F({\bf t}+{\bf \tau}+l{\bf u};x)\bigr\|_{Y}-|c| \bigl\|F({\bf t}+l{\bf u};x) \bigr\|_{Y}\Bigr| \leq \bigl\| F({\bf t}+{\bf \tau}+l{\bf u};x)-cF({\bf t}+l{\bf u};x) \bigr\|_{Y},
$$
with the notation and meaning clear. The proof of (ii) follows from the chain rule, the definition of norm in $L^{p_{1}(\cdot)}(-\Omega)$ and the next equalities:{\scriptsize
\begin{align*}
& l^{n}{\mathbb F}(l,-{\bf t})\Bigl[\phi\bigl( \bigl\| F(-{\bf t}-\tau-l{\bf u};x)-cF(-{\bf t}-l{\bf u};x)\bigr\|_{Y} \bigr)\Bigr]_{L^{p_{1}(\cdot)}(-\Omega )}
\\ & =l^{n}{\mathbb F}(l,-{\bf t})\inf \Biggl\{ \lambda >0 : \int_{-\Omega}\varphi_{p(-{\bf u})}\Biggl( \frac{\phi\bigl( \| F(-{\bf t}-\tau-l{\bf u};x)-cF(-{\bf t}-l{\bf u};x)\|_{Y} \bigr)}{\lambda}\Biggr)\, d{\bf u}\leq 1 \Biggr\} 
\\& =l^{n}{\mathbb F}(l,-{\bf t})\inf \Biggl\{ \lambda >0 : \int_{\Omega}\varphi_{p({\bf u})}\Biggl( \frac{\phi\bigl( \| F(-{\bf t}-\tau+l{\bf u};x)-cF(-{\bf t}+l{\bf u};x)\|_{Y} \bigr)}{\lambda}\Biggr)\, d{\bf u}\leq 1 \Biggr\} ,
\end{align*}}
with the notation and meaning clear.
\end{proof}

In what follows, we will extend the statements of \cite[Proposition 2.3, Corollary 2.1, Proposition 2.4]{BIMV} to the multi-dimensional setting:

\begin{thm}\label{kokeza}
Suppose that the function ${\mathbb F}(\cdot;\cdot)$ does not depend on the second argument.
\begin{itemize}
\item[(i)] Suppose that $m\in {\mathbb N},$ $j\Lambda' +\Lambda+ l\Omega\subseteq \Lambda$ for all $l\geq 0$ and $j\in {\mathbb N},$ as well as that condition \emph{(F)} holds and there exists a finite real constant $c_{m}>0$ such that
\begin{align}\label{dalije}
\phi\bigl(x_{1}+\cdot \cdot \cdot +x_{m}\bigr) \leq c_{m}\Bigl[\phi \bigl(x_{1}\bigr) +\cdot \cdot \cdot +\phi\bigl(x_{m}\bigr) \Bigr],\quad x_{i}\geq 0\ \ \bigl( i\in {\mathbb N}_{m} \bigr).
\end{align}
Suppose, further, that $F\in (e-)W^{[p({\bf u}),\phi,{\mathbb F},c]}_{\Omega,\Lambda',{\mathcal B}}(\Lambda\times X :Y)$, resp. $F\in (e-)W^{[p({\bf u}),\phi,{\mathbb F},c]_{i}}_{\Omega,\Lambda',{\mathcal B}}(\Lambda\times X :Y)$ for $i=1,2.$ Then $F\in (e-)W^{[p({\bf u}),\phi,{\mathbb F},c^{m}]}_{\Omega,m\Lambda',{\mathcal B}}(\Lambda\times X :Y)$, resp. $F\in (e-)W^{[p({\bf u}),\phi,{\mathbb F},c^{m}]_{i}}_{\Omega,m\Lambda',{\mathcal B}}(\Lambda\times X :Y)$ provided $i=1,2$ and the function $\phi(\cdot)$ is monotonically increasing. 
\item[(ii)] Suppose that $m\in 2{\mathbb Z} \setminus \{0\},$
$p\in {\mathbb N},$ $(m, n) = 1,$ $|c| = 1$ and $\arg(c) = \pi m/p$ [$m\in 2{\mathbb Z}+1 ,$
$p\in {\mathbb N},$ $(m, n) = 1,$ $|c| = 1$ and $\arg(c) = \pi m/p$],
$m\in {\mathbb N},$ $j\Lambda' +\Lambda+ l\Omega\subseteq \Lambda$ for all $l\geq 0$ and $j\in {\mathbb N},$ as well as that condition \emph{(F)} holds and there exists a finite real constant $c_{m}>0$ such that \eqref{dalije} holds. If $F\in (e-)W^{[p({\bf u}),\phi,{\mathbb F},c]}_{\Omega,\Lambda',{\mathcal B}}(\Lambda\times X :Y)$, resp. $F\in (e-)W^{[p({\bf u}),\phi,{\mathbb F},c]_{i}}_{\Omega,\Lambda',{\mathcal B}}(\Lambda\times X :Y)$ for $i=1,2,$ then $F\in (e-)W^{[p({\bf u}),\phi,{\mathbb F},1]}_{\Omega,m\Lambda',{\mathcal B}}(\Lambda\times X :Y)$ [$F\in (e-)W^{[p({\bf u}),\phi,{\mathbb F},-1]}_{\Omega,m\Lambda',{\mathcal B}}(\Lambda\times X :Y)$], resp. $F\in (e-)W^{[p({\bf u}),\phi,{\mathbb F},1]_{i}}_{\Omega,m\Lambda',{\mathcal B}}(\Lambda\times X :Y)$ [$F\in (e-)W^{[p({\bf u}),\phi,{\mathbb F},-1]_{i}}_{\Omega,m\Lambda',{\mathcal B}}(\Lambda\times X :Y)$], provided $i=1,2$ and the function $\phi(\cdot)$ is monotonically increasing. 
\item[(iii)] Suppose that
$|c| = 1,$ $\arg(c)\notin \pi {\mathbb Q},$
$j\Lambda' +\Lambda+ l\Omega\subseteq \Lambda$ for all $l\geq 0$ and $j\in {\mathbb N},$ as well as that condition \emph{(F)} holds and for each $m\in {\mathbb N}$ there exists a finite real constant $c_{m}>0$ such that \eqref{dalije} holds. Let the function $\varphi(\cdot)$ be continuous at zero.
Suppose, further, that $F\in (e-)W^{[p({\bf u}),\phi,{\mathbb F},c]}_{\Omega,\Lambda',{\mathcal B}}(\Lambda\times X :Y)$, resp. $F\in (e-)W^{[p({\bf u}),\phi,{\mathbb F},c]_{i}}_{\Omega,\Lambda',{\mathcal B}}(\Lambda\times X :Y)$ for $i=1,2.$ Then $F\in (e-)W^{[p({\bf u}),\phi,{\mathbb F},c']}_{\Omega,\Lambda',{\mathcal B}}(\Lambda\times X :Y)$, provided that for each set $B\in {\mathcal B}$ the following condition holds
\begin{align}\label{drvane}
\sup_{l>1,{\bf t}\in \Lambda; x\in B} l^{n}{\mathbb F}(l)\Bigl[\phi\bigl( \| F({\bf t}+l{\bf u};x) \|_{Y} \bigr)\Bigr]_{L^{p({\bf u})}(\Omega)}<+\infty ,
\end{align}
resp. $F\in (e-)W^{[p({\bf u}),\phi,{\mathbb F},c']_{1}}_{\Omega,\Lambda',{\mathcal B}}(\Lambda\times X :Y)$ [$F\in (e-)W^{[p({\bf u}),\phi,{\mathbb F},c']_{2}}_{\Omega,\Lambda',{\mathcal B}}(\Lambda\times X :Y)$] provided that the function $\phi(\cdot)$ is monotonically increasing and
for each set $B\in {\mathcal B}$ the following condition holds
\begin{align*}
\sup_{l>1,{\bf t}\in \Lambda; x\in B} l^{n}{\mathbb F}(l)\phi\Bigl( \| F({\bf t}+l{\bf u};x)\|_{L^{p({\bf u})}(\Omega : Y)}\Bigr)<+\infty \ \ 
\end{align*}
\begin{align*}
\Biggl[ \ \ \sup_{l>1,{\bf t}\in \Lambda; x\in B} \phi\Bigl( l^{n}{\mathbb F}(l)\| F({\bf t}+l{\bf u};x)\|_{L^{p({\bf u})}(\Omega : Y)}\Bigr)<+\infty \ \ \Biggr].
\end{align*}
\end{itemize}
\end{thm}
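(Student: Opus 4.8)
The plan is to prove part (i) first, since it is the engine that drives (ii) and (iii). Everything rests on the telescoping identity
\[
F({\bf t}+m\tau+l{\bf u};x)-c^{m}F({\bf t}+l{\bf u};x)=\sum_{j=0}^{m-1}c^{m-1-j}\bigl[F({\bf t}+(j+1)\tau+l{\bf u};x)-cF({\bf t}+j\tau+l{\bf u};x)\bigr],
\]
which is legitimate because the shifted arguments stay in $\Lambda$ by the standing hypothesis $j\Lambda'+\Lambda+l\Omega\subseteq\Lambda$ ($0\le j\le m$). Given a good $c$-almost period $\tau\in\Lambda'$ for the input class, I would take $m\tau\in m\Lambda'$ as the candidate $c^{m}$-almost period; since $|m\tau-m{\bf t}_{0}|=m|\tau-{\bf t}_{0}|$, relative density at radius $L$ for $\Lambda'$ passes to relative density at radius $mL$ for $m\Lambda'$, which explains the change of index from $\Lambda'$ to $m\Lambda'$. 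After bounding the norm of the right-hand side by $\sum_{j}|c|^{m-1-j}\bigl\|F({\bf t}+(j+1)\tau+l{\bf u};x)-cF({\bf t}+j\tau+l{\bf u};x)\bigr\|_{Y}$, I would apply \eqref{dalije} together with condition \emph{(F)} to dominate $\phi$ of this sum by $c_{m}\sum_{j}\varphi(|c|^{m-1-j})\phi(\cdots)$; each summand becomes, after the substitution ${\bf t}\mapsto{\bf t}+j\tau$ (valid since $j\Lambda'+\Lambda\subseteq\Lambda$, the $l=0$ case of the hypothesis), exactly a shifted copy of the defining quantity of the input class. Crucially, because ${\mathbb F}$ is independent of its second argument, the weight $l^{n}{\mathbb F}(l)$ is unaffected by the shift and factors out of each term; choosing the input accuracy to be $\epsilon$ divided by $c_{m}\sum_{j}\varphi(|c|^{m-1-j})$ then closes the estimate.

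The same telescoping serves all three variants, the only difference being the placement of $\phi$ and of $l^{n}{\mathbb F}(l)$ relative to the $L^{p({\bf u})}$-norm. In each case one passes the triangle inequality through first (at the level of $\|\cdot\|_{Y}$ for the no-subscript class, at the level of $\|\cdot\|_{L^{p({\bf u})}(\Omega:Y)}$ for types $1$ and $2$) and then uses the monotonicity of $\phi$ to push $\phi$ across before invoking \eqref{dalije} and \emph{(F)}; this is precisely why monotonicity is required for $i=1,2$. The $e$-$W$ and $W$ cases are identical, the only distinction being whether $l$ is fixed or one passes to $\limsup_{l\to+\infty}$; since all bounds are uniform in $l$, taking the $\limsup$ is harmless.

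Part (ii) is then an immediate corollary. Since $|c|=1$ and $\arg(c)=\pi m/p$, one has $c^{p}=e^{i\pi m}=(-1)^{m}$, which equals $1$ when $m$ is even and $-1$ when $m$ is odd. Applying part (i) with exponent $p$ therefore lands $F$ in the corresponding class with constant $(-1)^{m}\in\{1,-1\}$ over the stretched index, which is the assertion.

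For part (iii) the hypothesis $\arg(c)\notin\pi{\mathbb Q}$ makes $\{c^{m}:m\in{\mathbb N}\}$ dense in the unit circle, so for the target constant $c'$ and any $\epsilon>0$ I would choose $m$ with $|c^{m}-c'|$ as small as needed. Part (i) already controls the $c^{m}$-quantity, and the passage from $c^{m}$ to $c'$ is made through
\[
\bigl\|F({\bf t}+m\tau+l{\bf u};x)-c'F({\bf t}+l{\bf u};x)\bigr\|_{Y}\le\bigl\|F({\bf t}+m\tau+l{\bf u};x)-c^{m}F({\bf t}+l{\bf u};x)\bigr\|_{Y}+|c^{m}-c'|\,\bigl\|F({\bf t}+l{\bf u};x)\bigr\|_{Y};
\]
applying \eqref{dalije} with two terms, then \emph{(F)} and the uniform bound \eqref{drvane}, the extra contribution is at most $\varphi(|c^{m}-c'|)$ times a finite constant, which tends to $0$ as $|c^{m}-c'|\to0$ because $\varphi$ is continuous at zero. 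The main obstacle is the index bookkeeping: the $c'$-almost periods produced this way live in $m\Lambda'$ for an $m=m(\epsilon)$ that grows as $\epsilon\to0$, whereas the conclusion is over the fixed set $\Lambda'$. This is reconciled cleanly exactly when $m\Lambda'\subseteq\Lambda'$ for all $m\in{\mathbb N}$ (in particular when $\Lambda'={\mathbb R}^{n}$, the principal case), so that the stretched periods remain admissible; tracking the tension between the required input accuracy (which scales like $\epsilon/m$ through the constant $c_{m}\sum_{j}\varphi(\cdot)$ of part (i)) and the approximation error $\varphi(|c^{m}-c'|)$ is the delicate part of the argument.
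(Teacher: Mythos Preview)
Your proposal is correct and follows essentially the same approach as the paper: the same telescoping decomposition (indexed in the reverse order, which is immaterial), the same use of \eqref{dalije} together with condition (F) and the second-argument independence of ${\mathbb F}$ for (i), the same specialization $c^{p}=(-1)^{m}$ for (ii), and the same density-of-powers argument combined with \eqref{drvane} for (iii). Your remark about the index bookkeeping in (iii)---that the produced almost periods lie in $m\Lambda'$ with $m=m(\epsilon)$ rather than in the stated $\Lambda'$---is a genuine observation that the paper's proof does not address; it is harmless precisely under the closure condition $m\Lambda'\subseteq\Lambda'$ you isolate.
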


\begin{proof}
We will prove the statements (i) and (iii) for the class $(e-)W^{[p({\bf u}),\phi,{\mathbb F},c]}_{\Omega,\Lambda',{\mathcal B}}(\Lambda\times X :Y),$ only. Clearly, we have the following decomposition (${\bf t}\in \Lambda;$ ${\bf u}\in \Omega$; $l>0$):
\begin{align*}
F({\bf t}&+m\tau +l{\bf u} ;x) -c^{m}F({\bf t}+l{\bf u};x)
\\& =\sum_{j=0}^{m-1}c^{j}\Bigl[ F({\bf t}+(m-j)\tau +l{\bf u};x ) -c^{m}F({\bf t}+(m-j-1)+l{\bf u};x) \Bigr].
\end{align*}
Therefore, our assumptions imply {\small
\begin{align*}
& \phi \Bigl(\bigl\|F({\bf t}+m\tau +l{\bf u};x ) -c^{m}F({\bf t}+l{\bf u};x)\bigr\|_{Y}\Bigr)_{L^{p({\bf u})}(\Omega)}
\\& \leq c_{m}\sum_{j=0}^{m-1}\varphi\bigl(c^{j} \bigr) \phi \Bigl(\bigl\|F({\bf t}+(m-j)\tau +l{\bf u};x ) -c^{m}F({\bf t}+(m-j-1)\tau+l{\bf u};x)\bigr\|_{Y}\Bigr)_{L^{p({\bf u})}(\Omega)}
\end{align*}}
and ${\bf t}+(m-j-1)\tau \in \Lambda$ for all ${\bf t}\in \Lambda$ and
$0\leq j\leq m-1.$ The final conclusion of (i) simply follows from the above. To prove (iii), it should be only recalled that the set $\{c^{m} : m\in {\mathbb N}\}$
is dense in the unit sphere $S_{1}\equiv \{z \in {\mathbb C} : |z|=1\}$ so that there exists a strictly increasing sequence $(l_{k})$ of positive integers such that $\lim_{l\rightarrow +\infty}c^{l_{k}}=c'.$ Then the conclusion follows similarly as in the proof of
\cite[Proposition 2.3]{BIMV}, by applying the first part of this theorem, our assumption with $m=2$ and the estimate \eqref{drvane}.
\end{proof}

In what follows, we will revisit once more \cite[Example 2.1-Example 2.2]{BIMV} and \cite[Example 2.7-Example 2.8]{rmjm}:

\begin{example}\label{danikali}
Let $\Omega =[0,1]^{n}.$
\begin{itemize}
\item[(i)] Suppose that $\emptyset \neq K\subseteq {\mathbb R}^{n}$ and $F({\bf t}):=\chi_{K}({\bf t}),$ ${\bf t}\in {\mathbb R}^{n}.$
We will prove that for each $p\in D_{+}(\Omega)$ and $c\in {\mathbb C} \setminus \{0\}$ we have $F\in e-W^{[p({\bf u}),x,l^{-\sigma},c]}_{\Omega,{\mathbb R}^{n} }({\mathbb R}^{n} : {\mathbb C}).$ Keeping in mind Lemma \ref{aux}(ii), 
we get that (${\bf \tau} \in {\mathbb R}^{n};$ $l>0$):
\begin{align*}
\sup_{{\bf t}\in {\mathbb R}^{n}}& l^{n-\sigma}\Bigl\| \chi_{K}({\bf t}+\tau +l{\bf u})-c\chi_{K}({\bf t}+l{\bf u}) \Bigr\|_{L^{p({\bf u})}(\Omega)}
\\ & \leq 4 \sup_{{\bf t}\in {\mathbb R}^{n}} l^{n-\sigma}\Bigl\| \chi_{K}({\bf t}+\tau +l{\bf u})-c\chi_{K}({\bf t}+l{\bf u}) \Bigr\|_{L^{p^{+}}(\Omega)}
\\ & = 4 \sup_{{\bf t}\in {\mathbb R}^{n}}l^{-\sigma}\Bigl\| \chi_{K}({\bf t}+\tau +{\bf u})-c\chi_{K}({\bf t}+{\bf u}) \Bigr\|_{L^{p^{+}}(l\Omega)}
\\& \leq 4\sup_{{\bf t}\in {\mathbb R}^{n}}l^{-\sigma}\Biggl[ \bigl\| \chi_{K}(\cdot)\bigr\|_{L^{p^{+}}(l\Omega \cap [K-{\bf t}-{\bf \tau}])}+ |c|\bigl\| \chi_{K}(\cdot)\bigr\|_{L^{p^{+}}(l\Omega \cap [K-{\bf t}])} \Biggr]
\\& \leq 4l^{-\sigma}(1+|c|)m(K).
\end{align*}
This simply implies the required.
\item[(ii)] Set $F({\bf t}):=\chi_{[0,\infty)^{n}}({\bf t}),$ ${\bf t}\in {\mathbb R}^{n}.$ In \cite[Example 2.8]{rmjm}, we have proved that $F\in W^{[p,x,l^{-\sigma},1]}_{\Omega,{\mathbb R}^{n} }({\mathbb R}^{n} : {\mathbb C})$ if and only if $\sigma>(n-1)/p,$ as well as that
there is no $\sigma>0$ such that $F\in e-W^{[p,x,l^{-\sigma},1]}_{\Omega,{\mathbb R}^{n} }({\mathbb R}^{n} : {\mathbb C});$ similarly, we have that there is no $\sigma >0$ and $c\in {\mathbb C} \setminus \{0\}$ such that $F\in e-W^{[p,x,l^{-\sigma},c]}_{\Omega,{\mathbb R}^{n} }({\mathbb R}^{n} : {\mathbb C}).$ Since
\begin{align*}
\sup_{{\bf t}\in {\mathbb R}^{n}} \Bigl\| \chi_{[0,\infty)^{n}}({\bf t}+\tau +l{\bf u})-c\chi_{[0,\infty)^{n}}({\bf t}+l{\bf u}) \Bigr\|_{L^{p}(\Omega)} \geq |1-c|,
\end{align*}
as easily approved, we get that there is no $c\in {\mathbb C} \setminus \{0,1\}$ such that $F\in W^{[p,x,l^{-\sigma},c]}_{\Omega,{\mathbb R}^{n} }({\mathbb R}^{n} : {\mathbb C})$ for $n\geq \sigma >(n-1)/p.$ This is also the optimal result we can obtain because for any $\sigma>0$ and any essentially bounded function $F(\cdot)$ we have $F\in e-W^{[p,x,l^{-\sigma},c]}_{\Omega,{\mathbb R}^{n} }({\mathbb R}^{n} : {\mathbb C}).$
\end{itemize}
\end{example}

Regarding the convolution invariance of spaces introduced in this section, we will clarify just one result for the class $(e-)W^{[p({\bf u}),\phi,{\mathbb F},c]}_{\Omega,\Lambda',{\mathcal B}}(\Lambda\times X :Y);$ the proof is almost the same as the proof of \cite[Theorem 2.9]{rmjm} and therefore omitted:

\begin{thm}\label{shokiran1ce}
Suppose that 
$\phi :[0,\infty) \rightarrow [0,\infty) $ is a convex monotonically increasing function satisfying condition \emph{(F)}.
Suppose, further, that
$h\in L^{1}({\mathbb R}^{n}),$ $\Omega=[0,1]^{n} $, $F\in (e-)W^{[p({\bf u}),\phi,{\mathbb F},c]}_{\Omega,\Lambda',{\mathcal B}}({\mathbb R}^{n}\times X :Y),$ $1/p({\bf u})+1/q({\bf u})=1,$ and for each $x\in X$ we have $\sup_{{\bf t}\in {\mathbb R}^{n}}\| F({\bf t};x)\|_{Y}<\infty.$ If ${\mathbb F}_{1} : (0,\infty) \times {\mathbb R}^{n} \rightarrow (0,\infty),$ $p_{1}\in {\mathcal P}({\mathbb R^{n}})$ and if, for every ${\bf t}\in {\mathbb R}^{n}$ and $l>0,$  there exists a sequence $(a_{k})_{k\in  l{\mathbb Z}^{d}}$
of positive real numbers such that $\sum_{k\in  l{\mathbb Z}^{n}}a_{k}=1$ and {\small
\begin{align*}
\int_{\Omega}\varphi_{p_{1}({\bf u})}\Biggl( 2\sum_{k\in l{\mathbb Z}^{n}}a_{k}l^{-n}\Bigl[\varphi\bigl( a_{k}^{-1}l^{n}h(k-l{\bf v}) \bigr)\Bigr]_{L^{q({\bf v})}(\Omega)}{\mathbb F}_{1}(l,{\bf t})
\bigl[{\mathbb F}(l,{\bf t}+l{\bf u}-k)\bigr]^{-1}
\Biggr)\, d{\bf u} \leq 1,
\end{align*}}
then $h\ast F\in (e-)W^{[p_{1}({\bf u}),\phi,{\mathbb F}_{1},c]}_{\Omega,\Lambda',{\mathcal B}}({\mathbb R}^{n}\times X :Y).$
\end{thm}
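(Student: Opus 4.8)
The plan is to mimic the convolution argument of Theorem \ref{milenko}, now carried out at the level of the Luxemburg norm and organized around the Jensen and H\"older inequalities. First I would record that $(h\ast F)(\cdot;\cdot)$, given by \eqref{gariprekrsaj}, is well defined and continuous: since $h\in L^{1}({\mathbb R}^{n})$ and $\sup_{{\bf t}\in {\mathbb R}^{n}}\|F({\bf t};x)\|_{Y}<\infty$ for each $x$, the defining integral converges absolutely and continuity follows from the dominated convergence theorem, exactly as in the proof of Theorem \ref{milenko}. I would treat only the class $W^{[p({\bf u}),\phi,{\mathbb F},c]}_{\Omega,\Lambda',{\mathcal B}}$, the $e$-prefixed version being identical with $l$ kept fixed rather than sent to $+\infty$.

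Fix $\epsilon>0$ and $B\in{\mathcal B}$, and let $L>0$ and, for a given ${\bf t}_{0}\in\Lambda'$, the vector $\tau\in B({\bf t}_{0},L)\cap\Lambda'$ be those furnished by the membership $F\in W^{[p({\bf u}),\phi,{\mathbb F},c]}_{\Omega,\Lambda',{\mathcal B}}$. I claim the same $\tau$ witnesses the desired property of $h\ast F$. Writing $H({\bf w};x):=F({\bf w}+\tau;x)-cF({\bf w};x)$ and $G:=\|H\|_{Y}$, an immediate manipulation of \eqref{gariprekrsaj} gives
\[
(h\ast F)({\bf t}+\tau+l{\bf u};x)-c(h\ast F)({\bf t}+l{\bf u};x)=\int_{{\mathbb R}^{n}}h(\rho)\,H({\bf t}+l{\bf u}-\rho;x)\,d\rho .
\]
Decomposing ${\mathbb R}^{n}=\bigcup_{k\in l{\mathbb Z}^{n}}(k+l\Omega)$ (here $\Omega=[0,1]^{n}$), substituting $\rho=k+l{\bf v}$ with ${\bf v}\in\Omega$, and then reindexing $k\mapsto k+l(1,\ldots,1)$, ${\bf v}\mapsto(1,\ldots,1)-{\bf v}$ (which turns $h(k+l{\bf v})$ into $h(k-l{\bf v})$ and moves the base point of $H$ to ${\bf t}+l{\bf u}-k$), this becomes a sum over $k\in l{\mathbb Z}^{n}$ of integrals of $h(k-l{\bf v})\,H(({\bf t}+l{\bf u}-k)+l{\bf v};x)$ over ${\bf v}\in\Omega$, carrying an overall factor $l^{n}$.

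Next I would estimate pointwise in ${\bf u}$. Inserting the weights $a_{k}$ (with $\sum_{k}a_{k}=1$), I would apply, in order: (a) Jensen's inequality for the convex $\phi$ over the probability weights $(a_{k})$, bringing $\phi$ inside the sum; (b) Jensen's inequality over ${\bf v}\in\Omega$, legitimate since $m(\Omega)=1$, bringing $\phi$ inside the ${\bf v}$-integral; (c) condition (F), $\phi(a_{k}^{-1}l^{n}|h|\cdot G)\le\varphi(a_{k}^{-1}l^{n}|h|)\phi(G)$, to detach the kernel; and (d) the H\"older inequality of Lemma \ref{aux}(i) in the variable ${\bf v}$ with the conjugate pair $q({\bf v}),p({\bf v})$, splitting each summand into $2\,[\varphi(a_{k}^{-1}l^{n}h(k-l{\bf v}))]_{L^{q({\bf v})}(\Omega)}$ times $[\phi(G(({\bf t}+l{\bf u}-k)+l{\bf v};x))]_{L^{p({\bf v})}(\Omega)}$. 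The second factor is exactly the quantity controlled by the Weyl membership of $F$ at the base point ${\bf s}={\bf t}+l{\bf u}-k$: by Definition \ref{marinavistce} it is $<\epsilon/(l^{n}{\mathbb F}(l,{\bf t}+l{\bf u}-k))$. Multiplying the resulting pointwise bound by $l^{n}{\mathbb F}_{1}(l,{\bf t})$ makes the weight ${\mathbb F}_{1}(l,{\bf t})[{\mathbb F}(l,{\bf t}+l{\bf u}-k)]^{-1}$ appear, so that the integrand of the hypothesized integral inequality emerges (up to the H\"older constant $2$). Reading that inequality as the assertion that the relevant modular is $\le 1$, i.e. that the corresponding Luxemburg norm over ${\bf u}\in\Omega$ is $\le 1$, I would conclude $l^{n}{\mathbb F}_{1}(l,{\bf t})[\phi(\|(h\ast F)({\bf t}+\tau+l{\bf u};x)-c(h\ast F)({\bf t}+l{\bf u};x)\|_{Y})]_{L^{p_{1}({\bf u})}(\Omega)}\le C\epsilon$ uniformly in ${\bf t}$ and $x\in B$, which is the membership $h\ast F\in W^{[p_{1}({\bf u}),\phi,{\mathbb F}_{1},c]}_{\Omega,\Lambda',{\mathcal B}}$; passing to the limit superior in $l$ handles the non-$e$ class.

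The main obstacle I anticipate is the bookkeeping of the $l^{n}$-scaling together with the change of variables: one must arrange the reflection/reindexing so that the kernel appears as $h(k-l{\bf v})$ while $G$ is evaluated on $({\bf t}+l{\bf u}-k)+l\Omega$, so that the accumulated factors $a_{k}$, $[\varphi(a_{k}^{-1}l^{n}h(k-l\cdot))]_{L^{q({\bf v})}(\Omega)}$ and ${\mathbb F}_{1}(l,{\bf t})[{\mathbb F}(l,{\bf t}+l{\bf u}-k)]^{-1}$ assemble into precisely the integrand of the modular inequality in the statement. This alignment is exactly what the seemingly technical hypothesis is designed to enforce, and verifying it term by term — together with justifying the interchange of the summation over $k$ with the $L^{p_{1}({\bf u})}$-norm, which follows by monotone convergence — is the delicate part; the convexity and monotonicity of $\phi$ and condition (F) are used precisely to keep all intermediate quantities comparable to those appearing in that hypothesis.
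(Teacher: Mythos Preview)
Your proposal is correct and follows precisely the route the paper intends: the paper omits the proof entirely, saying only that it is ``almost the same as the proof of \cite[Theorem 2.9]{rmjm},'' and your argument---the lattice decomposition ${\mathbb R}^{n}=\bigcup_{k\in l{\mathbb Z}^{n}}(k-l\Omega)$ of the convolution, followed by Jensen over the weights $(a_{k})$, Jensen over ${\bf v}\in\Omega$, condition (F), the H\"older inequality of Lemma~\ref{aux}(i), insertion of the Weyl bound on $F$ at the shifted base points ${\bf t}+l{\bf u}-k$, and finally reading the modular hypothesis as the statement that the resulting Luxemburg norm is at most $1$---is exactly that argument transplanted to the present $c$-setting. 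The only cosmetic point is that your reflection/reindexing step is unnecessary: decomposing $\rho$-space directly as $\rho=k-l{\bf v}$ with $k\in l{\mathbb Z}^{n}$ and ${\bf v}\in\Omega$ already yields $h(k-l{\bf v})\,H(({\bf t}+l{\bf u}-k)+l{\bf v};x)$ without any change of orientation.
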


If $p\in [1,\infty),$ then any Stepanov $(p,c)$-quasi-asymptotically almost periodic function is Weyl-$(p,c)$-almost periodic (see \cite[Proposition 3.3]{BIMV}), which also holds for the corresponding classes of uniformly recurrent functions. In the one-dimensional setting, the generalized Weyl uniform recurrence in Lebesgue spaces with variable exponents has been thoroughly analyzed in \cite[Section 2]{dumath2}. This notion can be also introduced and analyzed in the multi-dimensional setting; for the sake of brevity, we will only mention the following notion here: Let (WM2) hold. Then we say that a function $F : \Lambda \times X \rightarrow Y$ is Weyl-$[\Omega,{\mathcal B},\Lambda', p, \phi,{\bf F},c]$-uniformly recurrent if and only if 
for each set $B\in {\mathcal B}$ we can find a sequence $(\tau_{k})$ in $\Lambda'$ such that $\lim_{k\rightarrow +\infty}|\tau_{k}|=+\infty$ as well as that
$$
\lim_{k\rightarrow +\infty}\limsup_{l\rightarrow +\infty}\sup_{{\bf t}\in \Lambda;x\in B}\Biggl[ {\bf F}(l,{\bf t})\phi\Bigl(\| F(\cdot l+{\bf t}+\tau_{k};x)-cF(\cdot l+{\bf t};x)\|_{Y}\Bigr)_{L^{p(\cdot)}(\Omega)} \Biggr]=0.
$$
The above-mentioned result about the set-theoretical embedding of space of Stepanov $(p,c)$-quasi-asymptotically almost periodic functions into the space of Weyl-$(p,c)$-almost periodic functions can be generalized in many different directions; in \cite[Proposition 6]{dumath2}, e.g., we have shown that any Stepanov-$(p,\phi,F)$-quasi-asymptotically uniformly recurrent function is Weyl-$(p(x),\phi,F_{1})$-uniformly recurrent under certain assumptions. 
This result can be formulated in the multi-dimensional setting but we will consider here only the constant coefficient case $p(\cdot) \equiv p\in [1,\infty)$ for brevity:

\begin{prop}\label{radosbajic}
Suppose that 
\emph{$(MD-B)_{S}$} holds
and a function $F : \Lambda \times X \rightarrow Y$ is  Stepanov-$[\Omega, {\mathcal B}, \Lambda',\Lambda, p,\phi,{\bf F},c]$-quasi-asymptotically uniformly recurrent. If
${\bf F}_{1} : (0,\infty) \times \Lambda \rightarrow (0,\infty)$ satisfies
\begin{equation*}
\lim_{k\rightarrow +\infty} \limsup_{l\rightarrow +\infty}\sup_{{\bf t}\in \Lambda}{\bf F}_{1}(l,{\bf t})\sum_{a\in {\mathbb Z}^{n} \cap [0,l]^{n}}\frac{1}{F({\bf t}+a,k)}<\infty
\end{equation*}
and
\begin{equation*}
\lim_{l\rightarrow +\infty}\sup_{{\bf t}\in \Lambda}{\bf F}_{1}(l,{\bf t})=0,
\end{equation*}
then
the function $F(\cdot ;\cdot)$ is 
Weyl-$[\Omega,{\mathcal B},\Lambda', p, \phi,{\bf F},c]$-uniformly recurrent.
\end{prop}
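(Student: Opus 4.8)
The plan is to reuse, for the Weyl conclusion, the very data produced by the hypothesis. By the uniformly recurrent alternative in Definition \ref{gorilazurappf}(i) (with ${\mathbb D}=\Lambda$ and $p(\cdot)\equiv p$), for the fixed $B\in{\mathcal B}$ there are a strictly increasing sequence $(\tau_{k})$ in $\Lambda'$ with $|\tau_{k}|\to+\infty$ and a sequence $(M_{k})$ with $M_{k}\to+\infty$ such that
\begin{align*}
\epsilon_{k}:=\sup_{{\bf t}\in\Lambda_{M_{k}},\,{\bf t}+\tau_{k}\in\Lambda_{M_{k}};\,x\in B}{\bf F}({\bf t},k)\,\phi\bigl(\bigl\|F(\cdot+{\bf t}+\tau_{k};x)-cF(\cdot+{\bf t};x)\bigr\|_{Y}\bigr)_{L^{p}(\Omega)}\longrightarrow 0,
\end{align*}
where $\Lambda_{M}=\{{\bf t}\in\Lambda:|{\bf t}|\ge M\}$. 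Since $|\tau_{k}|\to+\infty$, the same sequence $(\tau_{k})$ is admissible in the Weyl definition recalled just before the proposition, so everything reduces to estimating the Weyl seminorm with weight ${\bf F}_{1}$.

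The bridge from the Stepanov seminorm, taken over the fixed set $\Omega$, to the Weyl seminorm, taken over the dilate $l\Omega$, is a lattice decomposition. First I would substitute ${\bf v}=l{\bf u}$, which recasts the map ${\bf u}\mapsto\phi(\|F(l{\bf u}+{\bf t}+\tau_{k};x)-cF(l{\bf u}+{\bf t};x)\|_{Y})$ on $\Omega$ as the unscaled map $\tilde g_{{\bf t}}({\bf v}):=\phi(\|F({\bf v}+{\bf t}+\tau_{k};x)-cF({\bf v}+{\bf t};x)\|_{Y})$ on $l\Omega$, the attendant normalizing power of $l$ being carried by the weight ${\bf F}_{1}$. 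Next I would cover $l\Omega$ by the integer translates ${\bf a}+[0,1]^{n}$ with ${\bf a}\in{\mathbb Z}^{n}\cap[0,l]^{n}$, and use the elementary bound $\|f\|_{L^{p}(\bigcup_{{\bf a}}A_{{\bf a}})}\le\sum_{{\bf a}}\|f\|_{L^{p}(A_{{\bf a}})}$ valid for pairwise disjoint $A_{{\bf a}}$ and $p\ge 1$ (here $p$ is constant, so this is just classical $L^{p}$ theory). On the ${\bf a}$-th cube the restriction of $\tilde g_{{\bf t}}$ coincides with the Stepanov integrand based at the shifted point ${\bf t}+{\bf a}$; thus the whole Weyl seminorm is dominated by ${\bf F}_{1}(l,{\bf t})\sum_{{\bf a}\in{\mathbb Z}^{n}\cap[0,l]^{n}}\|\tilde g_{{\bf t}}\|_{L^{p}({\bf a}+\Omega)}$.

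I would then split the sum over ${\bf a}$ into a good part and a boundary part. If ${\bf t}+{\bf a}\in\Lambda_{M_{k}}$ and ${\bf t}+{\bf a}+\tau_{k}\in\Lambda_{M_{k}}$, then the definition of $\epsilon_{k}$ gives $\|\tilde g_{{\bf t}}\|_{L^{p}({\bf a}+\Omega)}\le \epsilon_{k}/{\bf F}({\bf t}+{\bf a},k)$. Summing these, multiplying by ${\bf F}_{1}(l,{\bf t})$ and taking $\sup_{{\bf t}\in\Lambda}$ bounds the good part by $\epsilon_{k}\sup_{{\bf t}\in\Lambda}{\bf F}_{1}(l,{\bf t})\sum_{{\bf a}\in{\mathbb Z}^{n}\cap[0,l]^{n}}{\bf F}({\bf t}+{\bf a},k)^{-1}$. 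Applying $\limsup_{l\to+\infty}$ and then $\lim_{k\to+\infty}$, the first displayed hypothesis keeps the inner $l$-$\limsup$ finite (hence bounded in $k$) while $\epsilon_{k}\to0$, so the good part contributes nothing to the iterated limit.

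The hard part is the boundary part, namely the cubes with ${\bf t}+{\bf a}\notin\Lambda_{M_{k}}$ or ${\bf t}+{\bf a}+\tau_{k}\notin\Lambda_{M_{k}}$, where the Stepanov estimate is unavailable. Two observations rescue it. For fixed $k$, such ${\bf a}$ lie among the lattice points of a ball of radius $\lesssim M_{k}+|\tau_{k}|$, so their number $N(k)$ is bounded independently of $l$ and ${\bf t}$; and on the corresponding compact range of arguments one has a bound $C(k):=\sup\|\tilde g_{{\bf t}}\|_{L^{p}({\bf a}+\Omega)}$ that is again independent of $l$ and ${\bf t}$. Securing this last bound uniformly in $x\in B$ is the only delicate point: it rests on the continuity of $F$ (equivalently, on well-definedness and continuity of the Bochner transform $\hat F_{\Omega}$ discussed in Section \ref{zlatnidecko}) together with the uniformity in $x\in B$ already present in the hypotheses, and is exactly the place where a local-boundedness assumption on $F$ over compacts times $B$ would be invoked if needed. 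Granting it, the boundary part is at most $N(k)\,C(k)\,\sup_{{\bf t}\in\Lambda}{\bf F}_{1}(l,{\bf t})$, which tends to $0$ as $l\to+\infty$ by the second displayed hypothesis, so its $\limsup_{l}$ vanishes for each $k$. Since $\limsup$ is subadditive, combining the two parts yields
\begin{align*}
\lim_{k\to+\infty}\limsup_{l\to+\infty}\sup_{{\bf t}\in\Lambda;x\in B}{\bf F}_{1}(l,{\bf t})\,\phi\bigl(\bigl\|F(\cdot l+{\bf t}+\tau_{k};x)-cF(\cdot l+{\bf t};x)\bigr\|_{Y}\bigr)_{L^{p}(\Omega)}=0,
\end{align*}
which is precisely the assertion that $F$ is Weyl-$[\Omega,{\mathcal B},\Lambda',p,\phi,{\bf F}_{1},c]$-uniformly recurrent.
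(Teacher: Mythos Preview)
The paper does not supply its own proof of this proposition: it is stated immediately after recalling that the one-dimensional analogue is \cite[Proposition~6]{dumath2}, and the narrative then moves on. So there is no in-paper argument to compare against; your sketch is the natural multi-dimensional transcription of the lattice-covering argument from \cite{dumath2}, and the overall architecture (substitute ${\bf v}=l{\bf u}$, tile $l\Omega$ by unit cubes indexed by ${\mathbb Z}^{n}\cap[0,l]^{n}$, split into a ``good'' part controlled by $\epsilon_{k}/{\bf F}({\bf t}+{\bf a},k)$ and a ``boundary'' part of cardinality $O(M_{k}^{n})$, then invoke the two displayed hypotheses on ${\bf F}_{1}$) is exactly what that reference does in dimension one.

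Two remarks are nevertheless in order. First, your tiling presumes $\Omega=[0,1]^{n}$; the form of the hypothesis (a sum over ${\mathbb Z}^{n}\cap[0,l]^{n}$) makes clear that this is the intended setting, and the paper itself restricts to constant $p$ ``for brevity'', so this specialization is consistent with the statement --- but it should be said explicitly rather than left implicit. Second, the point you flag yourself is a genuine gap in the statement as written: the finiteness of $C(k)=\sup_{|{\bf s}|\le M_{k}}\sup_{x\in B}\phi(\|F(\cdot+{\bf s}+\tau_{k};x)-cF(\cdot+{\bf s};x)\|_{Y})_{L^{p}(\Omega)}$ is not guaranteed by the hypotheses of the proposition, since $F$ is not assumed continuous or locally bounded on $\Lambda\times B$. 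The one-dimensional source \cite{dumath2} carries an implicit Stepanov-boundedness hypothesis at this step; here one needs the same, e.g.\ that $\sup_{{\bf t}\in K,\,x\in B}\|F({\bf t}+\cdot;x)\|_{L^{p}(\Omega:Y)}<\infty$ for every compact $K\subseteq\Lambda$, for the boundary term $N(k)C(k)\sup_{{\bf t}}{\bf F}_{1}(l,{\bf t})$ to make sense and vanish as $l\to+\infty$. With that standing assumption made explicit, your argument is complete; note also that the conclusion should read ${\bf F}_{1}$ rather than ${\bf F}$, which you have silently (and correctly) repaired.
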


We close this section with the observation that the notion introduced in \cite[Definition 3.17]{rmjm} can be also analyzed following the approach employed here, by replacing the corresponding differences $\| \cdot -\cdot \|$ in definitions with a general constant $c\in {\mathbb C} \setminus \{0\}$ and the differences $\| \cdot -c \cdot \|$.

\section{Applications to the abstract Volterra integro-differential equations}\label{apply}

This section is devoted to some applications of our abstract theoretical results to the abstract Volterra integro-differential equations. \vspace{0.1cm}

1. We start by noting that all established applications made in the fourth section of \cite{rmjm}, including applications to the d'Alemebert formula, the Gaussian semigroups in ${\mathbb R}^{n}$ and the nonautonomous differential equations of the first order, can be straightforwardly formulated for the corresponding classes of multi-dimensional (equi-)Weyl $c$-almost periodic type functions considered in this paper. In this part, we will present the following illustrative application of Theorem \ref{milenko2}, only: Let $Y$ be one of the spaces $L^{p}({\mathbb R}^{n}),$ $C_{0}({\mathbb R}^{n})$ or $BUC({\mathbb R}^{n}),$ where $1\leq p<\infty.$ Then the Gaussian semigroup\index{Gaussian semigroup}
$$
(G(t)F)(x):=\bigl( 4\pi t \bigr)^{-(n/2)}\int_{{\mathbb R}^{n}}F(x-y)e^{-\frac{|y|^{2}}{4t}}\, dy,\quad t>0,\ f\in Y,\ x\in {\mathbb R}^{n}
$$
can be extended to a bounded analytic $C_{0}$-semigroup of angle $\pi/2,$ generated by the Laplacian $\Delta_{Y}$ acting with its maximal distributional domain in $Y.$ Suppose now that $t_{0}>0$ is a fixed real number, $\Omega=[0,1]^{n},$ ${\mathbb D}=\Lambda={\mathbb R}^{n}$ and the function $F : {\mathbb R}^{n} \rightarrow {\mathbb C}$ is Stepanov-$[\Omega, \Lambda',{\mathbb R}^{n}, p, x, \textsf{F}, c]$-quasi-asymptotically almost periodic, resp. Stepanov-$[\Omega, \Lambda',{\mathbb R}^{n}, p, x, {\bf F}, c]$-quasi-asymptotically uniformly recurrent. Then the function $x\mapsto (G(t_{0})F)(x),$ $x\in {\mathbb R}^{n}$ is Stepanov-$[\Omega, \Lambda',{\mathbb R}^{n}, p, x, \textsf{F}_{1}, c]$-quasi-asymptotically almost periodic, resp. Stepanov-$[\Omega, \Lambda',{\mathbb R}^{n}, p, x, {\bf F}_{1}, c]$-quasi-
asymptotically uniformly recurrent provided that there exists a continuous function $g : [0,\infty) \rightarrow [0,\infty)$ with $g(0+)=0+$ such that
for each $\epsilon>0$ and $\tau \in \Lambda',$ resp. for each $n\in {\mathbb N}$ and $\tau \in \lambda',$ there exists $M(\epsilon,\tau)>0,$ resp. $M(n,\tau)>0,$ such that for each ${\bf t} \in {\mathbb R}^{n}$ with $|{\bf t}|\geq M(\epsilon,\tau),$ resp. $|{\bf t}|\geq M(n,\tau),$ we have
$$
\int_{[0,1]^{n}}\Biggl[ \textsf{F}_{1}({\bf t}, \epsilon,\tau)\Biggl( \sum_{k\in {\mathbb Z}^{n}}\frac{e^{-|{\bf t}-k|^{2}}}{\textsf{F}({\bf u}+k,\epsilon,\tau)} +g(\epsilon)\Biggr)\Biggr]^{p}\, d{\bf u} \leq 1,
$$
resp.
$$
\int_{[0,1]^{n}}\Biggl[ {\bf F}_{1}({\bf t}, n)\Biggl( \sum_{k\in {\mathbb Z}^{n}}\frac{e^{-|{\bf t}-k|^{2}}}{{\bf F}({\bf u}+k,n)} +g(1/n)\Biggr)\Biggr]^{p}\, d{\bf u} \leq 1.
$$
However, this is a pure theoretical condition which cannot be so simply verified in some practical situations; see also Theorem \ref{milenko} and Theorem \ref{milenko1} which can be also applied here.

2. Concerning the regular solutions of the inhomogeneous wave equations given by the d'Alembert formula,
we would like to note that the analysis carried out in the issues \cite[2.2-2.3, Section 4]{multi-omega-ce} can be also used to justify the introduction of notion in Definition \ref{drasko-presing-12345} and Definition \ref{drasko-presing123456}. More precisely, suppose that 
$\omega \in {\mathbb R} \setminus \{0\},$ $k\in {\mathbb N}$ and $c\in {\mathbb C} \setminus \{0\}$ satisfies 
$c^{k-1}=1.$
Recall that the regular solution of the wave equation $u_{tt}=a^{2}u_{xx}$ in domain $\{(x,t) : x\in {\mathbb R},\ t>0\},$ equipped with the initial conditions $u(x,0)=f(x)\in C^{2}({\mathbb R})$ and $u_{t}(x,0)=g(x)\in C^{1}({\mathbb R}),$ is given by the famous d'Alembert formula
$$
u(x,t)=\frac{1}{2}\bigl[ f(x-at) +f(x+at) \bigr]+\frac{1}{2a}\int^{x+at}_{x-at}g(s)\, ds,\quad x\in {\mathbb R}, \ t>0.
$$  
Suppose that ${\mathbb D}$ is any unbounded set in the plane ${\mathbb R}^{2}$ such that ($g^{[1]}(\cdot)\equiv \int^{\cdot}_{0}g(s)\, ds$):
\begin{align*}
&\lim_{|(x,t)|\rightarrow +\infty, (x,t) \in {\mathbb D}}\Biggl[
|f(x-at+\omega)-cf(x-at)|+\Bigl| g^{[1]}(x-at+\omega)-cg^{[1]}(x-at)\Bigr|
\\& +\sum_{j=1}^{k}\Bigl(| f(x+at+j\omega)-cf(x+at+(j-1)\omega) |
\\& +\Bigl| g^{[1]}(x+at+j\omega)-cg^{[1]}(x+at+(j-1)\omega)\Bigr|\Bigr)
\Biggr]=0,
\end{align*}
as well as that
$$
\omega_{1}:=\frac{1+k}{2}\omega \mbox{  and  }\omega_{2}:=\frac{k-1}{2a}\omega.
$$
Then $(\omega_{1},\omega_{2}) \neq (0,0),$ 
$\omega_{1}-a\omega_{2}=\omega,$ 
$\omega_{1}+a\omega_{2}=k\omega,$ 
$c^{k}=c$ and a simple use of the estimate
$$
\Bigl| f(x+k\omega)-c^{k}f(x) \Bigr|\leq \sum_{j=1}^{k}| f(x+j\omega)-cf(x+(j-1)\omega) |,\quad x\in {\mathbb R}
$$
shows that the function $(x,t)\mapsto u(x,t),$ $(x,t)\in {\mathbb R}^{2}$ is $(S,{\mathbb D})$-asymptotically $(\omega,c)$-periodic. In the particular case $a=1$ and ${\mathbb D}:=\{(x,t) \in {\mathbb R}^{2} : x\geq 0,\ t\geq 0,\ x\geq t^{2}+1\},$ e.g., it suffices to assume that the restrictions of functions $f (\cdot)$ and $g^{[1]}(\cdot)$ to the interval $[0,\infty)$ are $S$-asymptotically $(\omega,c)$-periodic in the sense of \cite[Definition 3.1]{BIMV}.  

3. Concerning the composition theorems and applications to semilinear Cauchy problems, we may refer, e.g., to \cite[Theorem 3.5, Theorem 3.6]{y-k chang}, \cite[Theorem 3.3, Theorem 3.4]{BIMV} and \cite[Theorem 2.19]{multi-ce} for some results known in the one-dimensional setting so far. The proofs of all these results are very elementary and we will only reconsider here the semilinear Hammerstein
integral equation of convolution type on ${\mathbb R}^{n}$  \index{space!$SP({\mathbb R}^{n} :  X)$}
(cf. the last application of \cite[Section 3]{multi-ce}). By the foregoing, we know that the space $SP({\mathbb R}^{n} :  X)$ of all 
semi-periodic functions $F : {\mathbb R}^{n} \rightarrow X$ is convolution invariant (it is not a Banach space but only a complete metric space). Since
the composition principle \cite[Theorem 2.9.51]{nova-selected} admits a straightforward extension to the multi-dimensional setting, we are able to show that, under certain assumptions, the following semilinear Hammerstein
integral equation 
\begin{align}\label{multiintegralce}
y({\bf t})= \int_{{\mathbb R}^{n}}k({\bf t}-{\bf s})G({\bf s},y({\bf s}))\, d{\bf s},\quad {\bf t}\in {\mathbb R}^{n},
\end{align}
where $G : {\mathbb R}^{n} \times X  \rightarrow X$ is semi-$(c_{j},{\mathcal B})_{j\in {\mathbb N}_{n}}$-periodic with ${\mathcal B}$ being the collection of all bounded subsets of $X$ and $c_{j}=1$ for all $j\in {\mathbb N}_{n},$ has a unique semi-periodic solution.
Let us assume that 
there exists a finite real constant $L>0$ such that 
\begin{align*}
\bigl\|G({\bf t};y)-G\bigl({\bf t};y'\bigr)\bigr\|_{X} \leq L\bigl\| y-y'\bigr\|_{X} ,\quad {\bf t}\in {\mathbb R}^{n},\ y\in X,\ y'\in X.
\end{align*}
It can be simply shown that for any semi-periodic function $y : {\mathbb R}^{n} \rightarrow X$ we have that the mapping ${\bf t}\mapsto G({\bf t};y({\bf t})),$ ${\bf t}\in {\mathbb R}^{n}$ is semi-periodic, as well. Since the space of semi-periodic functions in ${\mathbb R}^{n}$ is convolution invariant, it follows that the mapping 
$$
SP({\mathbb R}^{n} :  X) \ni y \mapsto \int_{{\mathbb R}^{n}}k({\cdot}-{\bf s})G({\bf s},y({\bf s}))\, d{\bf s} \in SP({\mathbb R}^{n} :  X)
$$
is well defined. If we assume that $L\int_{{\mathbb R}^{n}}|k({\bf t})| \, d{\bf t}<1,$ then the use of
Banach contraction principle
yields that there exists a unique solution of \eqref{multiintegralce} which belongs to the space $SP({\mathbb R}^{n} :  X)$.

\end{document}